\newtheorem{theorem}{Theorem}[section]
\newtheorem{lemma}[theorem]{Lemma}
\newtheorem{corollary}[theorem]{Corollary}
\theoremstyle{definition}
\newtheorem{definition}[theorem]{Definition}
\theoremstyle{remark}
\newtheorem{remark}[theorem]{Remark}
\numberwithin{equation}{section}
\begin{document}

\setcounter{page}{1}

\title[Generalized fractional integrals associated with operators]{Estimates for generalized fractional integrals associated with operators on Morrey--Campanato spaces}

\author[Cong Chen and Hua Wang]{Cong Chen and Hua Wang$^{*}$}

\address{School of Mathematics and Systems Science, Xinjiang University, Urumqi 830046, P. R. China.}
\email{\textcolor[rgb]{0.00,0.00,0.84}{wanghua@pku.edu.cn}}

\dedicatory{In memory of Li Xue}

\subjclass[2010]{Primary 42B20, 42B25; Secondary 42B35, 47G10}

\keywords{Generalized fractional integral operator, commutator, Morrey--Campanato spaces, Gaussian upper bounds}

\date{\today}

\begin{abstract}
Let $\mathcal{L}$ be the infinitesimal generator of an analytic semigroup $\big\{e^{-t\mathcal L}\big\}_{t>0}$ satisfying the Gaussian upper bounds. For given $0<\alpha<n$, let $\mathcal L^{-\alpha/2}$ be the generalized fractional integral associated with $\mathcal{L}$, which is defined as
\begin{equation*}
\mathcal L^{-\alpha/2}(f)(x):=\frac{1}{\Gamma(\alpha/2)}\int_0^{+\infty} e^{-t\mathcal L}(f)(x)t^{\alpha/2-1}dt,
\end{equation*}
where $\Gamma(\cdot)$ is the usual gamma function. For a locally integrable function $b(x)$ defined on $\mathbb R^n$, the related commutator operator $\big[b,\mathcal L^{-\alpha/2}\big]$ generated by $b$ and $\mathcal{L}^{-\alpha/2}$ is defined by
\begin{equation*}
\big[b,\mathcal L^{-\alpha/2}\big](f)(x):=b(x)\cdot\mathcal{L}^{-\alpha/2}(f)(x)-\mathcal{L}^{-\alpha/2}(bf)(x).
\end{equation*}
A new class of Morrey--Campanato spaces associated with $\mathcal{L}$ is introduced in this paper. The authors establish some new estimates for the commutators $\big[b,\mathcal L^{-\alpha/2}\big]$ on Morrey--Campanato spaces. The corresponding results for higher-order commutators$\big[b,\mathcal L^{-\alpha/2}\big]^m$($m\in \mathbb{N}$) are also discussed.
\end{abstract}

\maketitle

\section{Introduction and preliminaries}
\label{sec1}
Let $\mathcal{L}$ be the infinitesimal generator of an analytic semigroup $\big\{e^{-t\mathcal L}\big\}_{t>0}$ on $L^2(\mathbb R^n)$ with Gaussian upper bounds on its heat kernel, and suppose that $\mathcal{L}$ has a bounded holomorphic functional calculus on $L^2(\mathbb R^n)$. Let $\mathbb R^n$ be the $n$-dimensional Euclidean space endowed with the Lebesgue measure $dx$ and the Euclidean norm $|\cdot|$. Then $\mathcal{L}$ is the linear operator which generates an analytic semigroup $\big\{e^{-t\mathcal L}\big\}_{t>0}$ with kernel $\mathcal{P}_t(x,y)$ satisfying
\begin{equation*}
e^{-t\mathcal L}(f)(x):=\int_{\mathbb R^n}\mathcal{P}_t(x,y)f(y)\,dy, \quad t>0,
\end{equation*}
and there exist two positive constants $C$ and $A$ such that for all $x,y\in\mathbb R^n$ and all $t>0$, we have
\begin{equation}\label{G}
\big|\mathcal{P}_t(x,y)\big|\leq\frac{C}{t^{n/2}}\cdot e^{-A\frac{|x-y|^2}{t}}.
\end{equation}
For any $0<\alpha<n$, the generalized fractional integral $\mathcal L^{-\alpha/2}$ associated with the operator $\mathcal{L}$ is defined by
\begin{equation}\label{gefrac}
\mathcal L^{-\alpha/2}(f)(x):=\frac{1}{\Gamma(\alpha/2)}\int_0^{+\infty}e^{-t\mathcal L}(f)(x)t^{\alpha/2-1}dt.
\end{equation}
Let $\Delta$ be the Laplacian operator on $\mathbb R^n$, that is,
\begin{equation*}
\Delta:=\frac{\partial^2}{\partial x_1^2}+\cdots+\frac{\partial^2}{\partial x_n^2}.
\end{equation*}
Note that if $\mathcal L=-\Delta$ is the Laplacian on $\mathbb R^n$, then $\mathcal L^{-\alpha/2}$ is exactly the classical fractional integral operator $I_{\alpha}$ of order $\alpha$($0<\alpha<n$), which is given by
\begin{equation*}
I_{\alpha}(f)(x):=\frac{1}{\gamma(\alpha)}\int_{\mathbb R^n}\frac{f(y)}{|x-y|^{n-\alpha}}dy,\quad x\in\mathbb R^n,
\end{equation*}
where $\gamma(\alpha):=\frac{2^{\alpha}\pi^{n/2}\Gamma(\alpha/2)}{\Gamma({(n-\alpha)}/2)}$ and $\Gamma(\cdot)$ being the usual gamma function. Let $(-\Delta)^{\alpha/2}$(under $0<\alpha<n$) denote the $\alpha/2$-th order Laplacian operator. Then $u=I_{\alpha}f$ is viewed as a solution of the $\alpha/2$-th order Laplace equation
\begin{equation*}
(-\Delta)^{\alpha/2}u=f
\end{equation*}
in the sense of the Fourier transform; i.e., $(-\Delta)^{\alpha/2}$ exists as the inverse of $I_{\alpha}$.It is well known that the classical fractional integral operator $I_{\alpha}$ of order $\alpha$ plays an important role in harmonic analysis, potential theory and PDEs, particularly in the study of smoothness properties of functions. Let $0<\alpha<n$ and $1<p<q<\infty$. The classical Hardy--Littlewood--Sobolev theorem states that $I_{\alpha}$ is bounded from $L^p(\mathbb R^n)$ to $L^q(\mathbb R^n)$ if and only if $1/q=1/p-\alpha/n$.
Since the semigroup $\big\{e^{-t\mathcal L}\big\}_{t>0}$ has a kernel $\mathcal{P}_t(x,y)$ which satisfies the Gaussian upper bound \eqref{G}, it is easy to check that for all $x\in\mathbb R^n$,
\begin{equation}\label{dominate1}
\big|\mathcal L^{-\alpha/2}(f)(x)\big|\le C\cdot I_{\alpha}(|f|)(x).
\end{equation}
In fact, if we denote the kernel of $\mathcal L^{-\alpha/2}$ by $\mathcal K_{\alpha}(x,y)$, then it follows immediately from \eqref{gefrac} and Fubini's theorem that (see \cite{duong1} and \cite{mo})
\begin{equation}\label{kgamma}
\mathcal K_{\alpha}(x,y)=\frac{1}{\Gamma(\alpha/2)}\int_0^{+\infty}\mathcal{P}_t(x,y)t^{\alpha/2-1}dt,
\end{equation}
where $\mathcal{P}_t(x,y)$ is the kernel of $e^{-t\mathcal L}$. Thus, by using the Gaussian upper bound \eqref{G} and the expression \eqref{kgamma}, we can deduce that
\begin{align}\label{kernelk}
\big|\mathcal K_{\alpha}(x,y)\big|
&\leq\frac{1}{\Gamma(\alpha/2)}\int_0^{+\infty}\big|\mathcal{P}_t(x,y)\big|t^{\alpha/2-1}dt\notag\\
&\leq C\cdot\int_0^{+\infty} e^{-A\frac{|x-y|^2}{t}}\cdot t^{\alpha/2-n/2-1}dt\notag\\
&\leq C\cdot\frac{1}{|x-y|^{n-\alpha}}\int_0^{+\infty} e^{-\nu}\cdot\nu^{n/2-\alpha/2-1}d\nu\notag\\
&\leq C\cdot\frac{1}{|x-y|^{n-\alpha}}.
\end{align}
This proves \eqref{dominate1} with $C>0$ independent of $f$ (see \cite{duong1} and \cite{mo}).

For $x_0\in\mathbb R^n$ and $r>0$, let $B(x_0,r)=\{x\in\mathbb R^n:|x-x_0|<r\}$ denote the open ball centered at $x_0$ of radius $r$, $B(x_0,r)^{\complement}$ denote its complement and $m(B(x_0,r))$ be the Lebesgue measure of the ball $B(x_0,r)$. Recall that, for any given $1\leq p<\infty$, the space $L^p(\mathbb R^n)$ is defined as the set of all Lebesgue measurable functions $f$ on $\mathbb R^n$ such that
\begin{equation*}
\big\|f\big\|_{L^p}:=\bigg(\int_{\mathbb R^n}|f(x)|^p\,dx\bigg)^{1/p}<+\infty.
\end{equation*}
Let $L^{\infty}(\mathbb R^n)$ denote the Banach space of all essentially bounded measurable functions $f$ on $\mathbb R^n$.
The norm of $f\in L^{\infty}(\mathbb R^n)$ is given by
\begin{equation*}
\big\|f\big\|_{L^\infty}:=\underset{x\in\mathbb R^n}{\mbox{ess\,sup}}\,|f(x)|<+\infty.
\end{equation*}
A locally integrable function $f$ is said to belong to the space $\mathrm{BMO}(\mathbb R^n)$(bounded mean oscillation space, see \cite{john}), if
\begin{equation*}
\big\|f\big\|_{\mathrm{BMO}}:=\sup_{\mathcal{B}\subset\mathbb R^n}
\frac{1}{m(\mathcal{B})}\int_{\mathcal{B}}|f(x)-f_{\mathcal{B}}|\,dx<+\infty,
\end{equation*}
where $f_{\mathcal{B}}$ denotes the mean value of $f$ on the ball $\mathcal{B}$; i.e.,
\begin{equation*}
f_{\mathcal{B}}:=\frac{1}{m(\mathcal{B})}\int_{\mathcal{B}} f(y)\,dy
\end{equation*}
and the supremum is taken over all balls $\mathcal{B}$ in $\mathbb R^n$. Modulo constants, the space $\mathrm{BMO}(\mathbb R^n)$ is a Banach space with respect to the BMO norm $\|\cdot\|_{\mathrm{BMO}}$.

Let $b(x)$ be a locally integrable function on $\mathbb R^n$ and $0<\alpha<n$. Then the commutator operator generated by $b$ and $\mathcal{L}^{-\alpha/2}$ is defined by
\begin{equation}
\big[b,\mathcal L^{-\alpha/2}\big](f)(x):=b(x)\cdot\mathcal{L}^{-\alpha/2}(f)(x)-\mathcal{L}^{-\alpha/2}(bf)(x).
\end{equation}
The function $b$ is called \emph{the symbol function} of $\big[b,\mathcal L^{-\alpha/2}\big]$. The commutator $\big[b,\mathcal L^{-\alpha/2}\big]$ was first introduced and studied by Duong and Yan in \cite{duong1}. When $\mathcal L=-\Delta$ is the Laplacian operator on $\mathbb R^n$, the commutator $\big[b,\mathcal L^{-\alpha/2}\big]=\big[b,I_{\alpha}\big]$ generated by $b$ and $I_{\alpha}$ was first defined by Chanillo in \cite{cha}. Then for all $0<\alpha<n$,
\begin{equation*}
\big[b,I_{\alpha}\big](f)(x)=\frac{1}{\gamma(\alpha)}\int_{\mathbb R^n}\frac{[b(x)-b(y)]}{|x-y|^{n-\alpha}}f(y)\,dy,
\end{equation*}
and
\begin{equation*}
\big[b,\mathcal L^{-\alpha/2}\big](f)(x)=\int_{\mathbb R^n}\big[b(x)-b(y)\big]\mathcal{K}_{\alpha}(x,y)f(y)\,dy,
\end{equation*}
where $\mathcal{K}_{\alpha}(x,y)$ denotes the kernel of $\mathcal{L}^{-\alpha/2}$.

In the present paper, we will study the boundedness of generalized fractional integral operators and commutators.

\begin{remark}
The property \eqref{G} is satisfied by a large class of differential operators, such as (magnetic) Schr\"{o}dinger operators and second-order elliptic operators of divergence form, see \cite{duong1,duong3,duong2} for more details.
\end{remark}

For the classical fractional integral $I_{\alpha}$ and the commutator $\big[b,I_{\alpha}\big]$ acting on Lebesgue spaces, we have
\begin{theorem}\label{11}
The following statements are true:
\begin{enumerate}
  \item Let $0<\alpha<n$, $1<p<n/{\alpha}$ and $1/q=1/p-\alpha/n$. Then the classical fractional integral $I_{\alpha}$ is bounded from $L^{p}(\mathbb R^n)$ to $L^{q}(\mathbb R^n)$.
  \item If $b\in \mathrm{BMO}(\mathbb R^n)$, then the commutator operator $\big[b,I_{\alpha}\big]$ is bounded from $L^{p}(\mathbb R^n)$ to $L^{q}(\mathbb R^n)$.
\end{enumerate}
\end{theorem}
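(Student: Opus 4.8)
The plan is to recover both statements from classical real-variable machinery, treating (1) as the Hardy--Littlewood--Sobolev inequality and (2) as its commutator analogue, since both operators here are the \emph{classical} $I_\alpha$ and $[b,I_\alpha]$ on $\mathbb R^n$ (no operator $\mathcal L$ is involved yet). For part (1), I would first reduce to a pointwise estimate: split the convolution kernel $|x-y|^{\alpha-n}$ at a radius $\delta>0$ to be chosen, bounding the near part $\int_{|x-y|<\delta}$ by $C\delta^{\alpha}M f(x)$ via a standard annular decomposition and the maximal function, and the far part $\int_{|x-y|\ge\delta}$ by $C\delta^{\alpha-n/p'}\|f\|_{L^p}$ using H\"older's inequality (this needs $p'>n/(n-\alpha)$, i.e. $p<n/\alpha$, exactly the hypothesis). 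Optimizing in $\delta$ gives the Hedberg-type pointwise bound $|I_\alpha f(x)|\le C (Mf(x))^{p/q}\|f\|_{L^p}^{1-p/q}$, and then the $L^p$-boundedness of $M$ together with the relation $1/q=1/p-\alpha/n$ yields $\|I_\alpha f\|_{L^q}\le C\|f\|_{L^p}$.

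For part (2), I would use the sharp maximal function approach. The key step is the pointwise estimate
\begin{equation*}
M^{\#}_{\delta}\big([b,I_\alpha]f\big)(x)\le C\|b\|_{\mathrm{BMO}}\Big(M_{r}\big(I_\alpha f\big)(x)+M_{\alpha,s}f(x)\Big)
\end{equation*}
for suitable $0<\delta<1$ and $1<r,s<\infty$, where $M^{\#}_\delta$ is the (power-weighted) sharp maximal operator and $M_{\alpha,s}$ is the fractional maximal operator $M_{\alpha,s}f=\big(M_s(|f|)\big)$ suitably normalized at scale $\alpha$. To prove this, fix a ball $\mathcal B=B(x_0,\rho)$ and write $b-b_{\mathcal B}$ in the commutator, splitting $f=f\chi_{2\mathcal B}+f\chi_{(2\mathcal B)^{\complement}}$; the local term is handled by the $L^p\to L^q$ bound from part (1) applied on $2\mathcal B$ together with a John--Nirenberg estimate for $b$, and the tail term is controlled using the smoothness/decay of the kernel $|x-y|^{\alpha-n}$ and the mean oscillation of $b$ over dyadic dilates of $\mathcal B$. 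Once this sharp-function estimate is in hand, the Fefferman--Stein inequality $\|g\|_{L^q}\le C\|M^{\#}_\delta g\|_{L^q}$ (valid here because $[b,I_\alpha]f$ lies in an appropriate space, which one justifies by a truncation/approximation argument) combined with the $L^q$-boundedness of $M_r$ and the Hardy--Littlewood--Sobolev bound $\|M_{\alpha,s}f\|_{L^q}\le C\|f\|_{L^p}$ finishes the proof.

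The main obstacle I anticipate is the tail estimate in the sharp-function bound for $[b,I_\alpha]$: one must carefully track how $\big|b_{2^{k+1}\mathcal B}-b_{\mathcal B}\big|\le Ck\|b\|_{\mathrm{BMO}}$ grows logarithmically in $k$ and check that this is absorbed by the geometric decay coming from $\int_{|y-x_0|\sim 2^k\rho}|x_0-y|^{\alpha-n}\,dy \sim (2^k\rho)^{\alpha}$ against the opposing factor, so that the series $\sum_k k\,2^{-k(n-\alpha)}\cdots$ converges — this forces the precise interplay between $\alpha$, the integrability exponents, and the choice of $\delta<1$ in $M^{\#}_\delta$. A secondary technical point is justifying the applicability of the Fefferman--Stein inequality, i.e. that $\|M^{\#}_\delta([b,I_\alpha]f)\|_{L^q}$ controls $\|[b,I_\alpha]f\|_{L^q}$; this is standard but requires an a priori finiteness argument, typically obtained by first proving the estimate for $f\in L^\infty$ with compact support and bounded $b$, then passing to the limit.
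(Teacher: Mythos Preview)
Your proposal is sound, but note that the paper does not actually prove this theorem: immediately after the statement it writes ``For the proof of this result, see, for example, Stein [Chapter V], Grafakos [Chapter 6] and [Theorem 1, Chanillo],'' treating both parts as classical background. Your sketch for (1) via Hedberg's pointwise inequality $|I_\alpha f(x)|\le C(Mf(x))^{p/q}\|f\|_{L^p}^{1-p/q}$ and for (2) via the sharp maximal function / Fefferman--Stein route is precisely the standard argument found in those references (Chanillo's original proof of (2) uses a related good-$\lambda$ / sharp function strategy), so there is nothing to correct and no genuine divergence to discuss.
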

For the proof of this result, see, for example, Stein \cite[Chapter V]{stein}, Grafakos \cite[Chapter 6]{grafakos} and \cite[Theorem 1]{cha}.
For the weighted version of this result, see also Muckenhoupt--Wheeden \cite{muckenhoupt1,muckenhoupt2}, Segovia--Torrea \cite[Theorem 2.3]{segovia} and Lu--Ding--Yan \cite[Chapter 3]{lu}.

The result in Theorem \ref{11} can be extended from $-\Delta$ to the more general operator $\mathcal{L}$ with Gaussian upper bound \eqref{G}. In \cite{duong1}, by using a new sharp maximal function, $M^{\#}_{\mathcal{L}}$, adapted to the semigroup $\big\{e^{-t\mathcal L}\big\}_{t>0}$(see Definition \ref{martel} below), Duong and Yan studied unweighted estimates for commutators of generalized fractional integrals, and proved that for all $0<\alpha<n$ and $b\in \mathrm{BMO}(\mathbb R^n)$, both $\mathcal{L}^{-\alpha/2}$ and $\big[b,\mathcal L^{-\alpha/2}\big]$ are all bounded from $L^{p}(\mathbb R^n)$ to $L^{q}(\mathbb R^n)$. By the pointwise inequality \eqref{kernelk} and the kernel estimate for the difference operator $\mathcal{L}^{-\alpha/2}-e^{-t\mathcal L}\mathcal{L}^{-\alpha/2}$ in \cite{deng,duong1}(with $0<\alpha<n$), we have
\begin{theorem}\label{12}
The following statements are true:
\begin{enumerate}
  \item Let $0<\alpha<n$, $1<p<n/{\alpha}$ and $1/q=1/p-\alpha/n$. Then the generalized fractional integral operator $\mathcal L^{-\alpha/2}$ is bounded from $L^{p}(\mathbb R^n)$ to $L^{q}(\mathbb R^n)$.
  \item If $b\in \mathrm{BMO}(\mathbb R^n)$, then the commutator operator $\big[b,\mathcal L^{-\alpha/2}\big]$ is bounded from $L^{p}(\mathbb R^n)$ to $L^{q}(\mathbb R^n)$.
\end{enumerate}
\end{theorem}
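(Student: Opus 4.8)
For statement (1), I would argue directly from the pointwise domination \eqref{dominate1} together with part (1) of Theorem \ref{11}. If $1<p<n/\alpha$ and $1/q=1/p-\alpha/n$, then for every $f\in L^p(\mathbb R^n)$ one has $\big|\mathcal L^{-\alpha/2}(f)(x)\big|\le C\cdot I_\alpha(|f|)(x)$ for a.e.\ $x\in\mathbb R^n$, and therefore
\begin{equation*}
\big\|\mathcal L^{-\alpha/2}(f)\big\|_{L^q}\le C\big\|I_\alpha(|f|)\big\|_{L^q}\le C\big\|f\big\|_{L^p}
\end{equation*}
by the Hardy--Littlewood--Sobolev estimate recorded in Theorem \ref{11}(1). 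So only statement (2) requires genuine work.

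For the commutator the domination argument collapses: because of the cancellation carried by the factor $b(x)-b(y)$ in the kernel, $\big|[b,\mathcal L^{-\alpha/2}]f\big|$ is \emph{not} controlled pointwise by $[b,I_\alpha](|f|)$ or by any positive sublinear operator. My plan is therefore to follow Duong--Yan \cite{duong1} and work with the sharp maximal function $M^{\#}_{\mathcal L}$ adapted to the semigroup $\big\{e^{-t\mathcal L}\big\}_{t>0}$. The first --- and main --- step is to establish, for $b\in\mathrm{BMO}(\mathbb R^n)$, a pointwise inequality of the form
\begin{equation*}
M^{\#}_{\mathcal L}\big([b,\mathcal L^{-\alpha/2}]f\big)(x)\le C\|b\|_{\mathrm{BMO}}\Big(\big(M\big(|\mathcal L^{-\alpha/2}f|^{r}\big)(x)\big)^{1/r}+M_{\alpha,s}(f)(x)\Big),
\end{equation*}
where $M$ is the Hardy--Littlewood maximal operator, $r>1$ is close to $1$, and $M_{\alpha,s}$ (for a suitable $s$ with $1<s<p$) is a fractional maximal operator that is bounded from $L^p(\mathbb R^n)$ to $L^q(\mathbb R^n)$ under the relation $1/q=1/p-\alpha/n$. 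To prove this I would fix a ball $B=B(x_0,r_B)$ containing $x$, use the identity
\begin{equation*}
[b,\mathcal L^{-\alpha/2}]f=(b-b_B)\,\mathcal L^{-\alpha/2}f-\mathcal L^{-\alpha/2}\big((b-b_B)f\big),
\end{equation*}
split $f=f\chi_{2B}+f\chi_{(2B)^{\complement}}$, and estimate the mean oscillation $\frac{1}{m(B)}\int_B\big|g(y)-e^{-r_B^2\mathcal L}g(y)\big|\,dy$ defining $M^{\#}_{\mathcal L}$ for each resulting piece $g$: the local contributions are handled by H\"older's inequality, the John--Nirenberg inequality and statement (1), while the tail contribution --- the point where the structure of $\mathcal L$ enters --- is controlled by the kernel bound \eqref{kernelk} together with the regularity estimate for the difference operator $\mathcal L^{-\alpha/2}-e^{-t\mathcal L}\mathcal L^{-\alpha/2}$ from \cite{deng,duong1}, which supplies the extra decay in $|x-y|$ needed to sum the contributions of the dyadic annuli $2^{k+1}B\setminus 2^{k}B$, $k\ge1$.

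Once this pointwise inequality is available, the rest is routine. I would apply the Fefferman--Stein type inequality $\|g\|_{L^q}\le C\,\|M^{\#}_{\mathcal L}g\|_{L^q}$ --- valid once one knows a priori that $g$ lies in a suitable Lebesgue space, as in \cite{duong1} --- to $g=[b,\mathcal L^{-\alpha/2}]f$, and combine it with the pointwise bound just described, the $L^q$-boundedness of $M$, statement (1) (which gives $\big\|\big(M(|\mathcal L^{-\alpha/2}f|^{r})\big)^{1/r}\big\|_{L^q}\le C\|\mathcal L^{-\alpha/2}f\|_{L^q}\le C\|f\|_{L^p}$), and the $L^p\to L^q$ boundedness of $M_{\alpha,s}$, to conclude that $\big\|[b,\mathcal L^{-\alpha/2}]f\big\|_{L^q}\le C\|b\|_{\mathrm{BMO}}\|f\|_{L^p}$ for $b$ and $f$ ranging over suitable dense subclasses. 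A final truncation of $b$ together with Fatou's lemma then removes these restrictions and yields the full statement for all $b\in\mathrm{BMO}(\mathbb R^n)$ and $f\in L^p(\mathbb R^n)$.

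I expect the real obstacle to be the tail estimate inside the first step: one must show that the kernel of $\mathcal L^{-\alpha/2}-e^{-r_B^2\mathcal L}\mathcal L^{-\alpha/2}$ behaves like $|x-y|^{-(n-\alpha)}$ times a positive power of $r_B/|x-y|$ when $|x-y|\gtrsim r_B$ --- a form of H\"older regularity of the kernel that is \emph{not} a consequence of \eqref{kernelk} alone and that plays, in the operator-adapted argument, exactly the role that the smoothness of the Riesz-potential kernel $|x-y|^{-(n-\alpha)}$ plays in the classical proof of Theorem \ref{11}(2). Once this kernel regularity is in hand, the decomposition of the commutator, the local H\"older/John--Nirenberg estimates, and the final combination of maximal-function bounds are all standard. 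A secondary technical point, also settled by the truncation and limiting argument, is ensuring the a priori finiteness needed to legitimately invoke the Fefferman--Stein inequality for $M^{\#}_{\mathcal L}$.
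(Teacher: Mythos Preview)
Your proposal is correct and matches the approach the paper attributes to Duong--Yan \cite{duong1}: the paper does not give its own proof of Theorem \ref{12} but simply cites \cite[Theorem 1.1 and Lemma 2.2]{duong1} (and alternatives in \cite{cruz,auscher,benyi}), and your outline---pointwise domination \eqref{dominate1} for part (1), and for part (2) the sharp maximal function $M^{\#}_{\mathcal L}$, the decomposition $[b,\mathcal L^{-\alpha/2}]f=(b-b_B)\mathcal L^{-\alpha/2}f-\mathcal L^{-\alpha/2}((b-b_B)f)$, and the Fefferman--Stein inequality for $M^{\#}_{\mathcal L}$---is precisely the Duong--Yan argument. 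The ``real obstacle'' you anticipate, namely the kernel estimate $\big|\widetilde K_{\alpha,t}(x,y)\big|\le C|x-y|^{-(n-\alpha)}\cdot t/|x-y|^2$ for the difference operator $(I-e^{-t\mathcal L})\mathcal L^{-\alpha/2}$, is exactly Lemma \ref{wanglemma2} of the present paper (proved in \cite[Lemma 3.1]{duong1} for $0<\alpha<1$ and in \cite[Lemma 5.3]{deng} for the full range $0<\alpha<n$), so that step is already available and the rest of your plan goes through as written.
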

For the proof of this result, see Duong--Yan \cite[Theorem 1.1 and Lemma 2.2]{duong1}. Another proof was given by Cruz-Uribe--Martell--P\'erez in \cite[Proposition 3.2]{cruz}, which used a variant of the $A_p$ extrapolation theorem and fractional Orlicz maximal operators. For the weighted version of this result, see Auscher--Martell \cite[Theorems 1.3 and 1.4]{auscher}. See also \cite[Section 5.4]{benyi} for another proof of such weighted estimates.

For $0<\beta\leq1$, we say that a real-valued function $f$ on $\mathbb R^n$ is a Lipschitz function of order $\beta$, if
\begin{equation*}
\big\|f\big\|_{\mathrm{Lip}_{\beta}}:=\sup_{x,y\in\mathbb R^n,x\neq y}\frac{|f(x)-f(y)|}{|x-y|^{\beta}}<+\infty.
\end{equation*}
Let $\mathrm{Lip}_{\beta}(\mathbb R^n)$ be the space of Lipschitz functions of order $\beta$, that is,
\begin{equation*}
\mathrm{Lip}_{\beta}(\mathbb R^n)=\big\{f:\|f\|_{\mathrm{Lip}_{\beta}}<+\infty\big\}.
\end{equation*}
If $b\in \mathrm{Lip}_{\beta}(\mathbb R^n)$ with $0<\beta\leq1$ and $0<\alpha+\beta<n$, then by the pointwise inequality \eqref{kernelk} and the definition of $\mathrm{Lip}_{\beta}(\mathbb R^n)$, we can deduce that for any $x\in\mathbb R^n$,
\begin{equation}\label{point1}
\begin{split}
\Big|\big[b,I_{\alpha}\big](f)(x)\Big|
&\leq\big\|b\big\|_{\mathrm{Lip}_{\beta}}\frac{1}{\gamma(\alpha)}
\int_{\mathbb R^n}\frac{|f(y)|}{|x-y|^{n-\alpha-\beta}}dy\\
&=\big\|b\big\|_{\mathrm{Lip}_{\beta}}I_{\alpha+\beta}(|f|)(x),
\end{split}
\end{equation}
and
\begin{equation}\label{point2}
\begin{split}
\Big|\big[b,\mathcal L^{-\alpha/2}\big](f)(x)\Big|
&\leq\int_{\mathbb R^n}\big|b(x)-b(y)\big|\cdot|\mathcal{K}_{\alpha}(x,y)||f(y)|\,dy\\
&\leq C\big\|b\big\|_{\mathrm{Lip}_{\beta}}\int_{\mathbb R^n}\frac{|f(y)|}{|x-y|^{n-\alpha-\beta}}dy\\
&\leq C\big\|b\big\|_{\mathrm{Lip}_{\beta}}I_{\alpha+\beta}(|f|)(x).
\end{split}
\end{equation}
Thus, by Theorem \ref{11}, we can prove that the commutators $\big[b,I_{\alpha}\big]$ and $\big[b,\mathcal L^{-\alpha/2}\big]$ are bounded operators from $L^{p}(\mathbb R^n)$ to $L^q(\mathbb R^n)$, whenever $1<p<n/{(\alpha+\beta)}$ and $1/q=1/p-{(\alpha+\beta)}/{n}$.
This result was obtained by Paluszy\'{n}ski in \cite{paluszynski} and Mo--Lu in \cite{mo}.

On the other hand, the classical Morrey spaces $\mathcal{M}^{p,\beta}(\mathbb R^n)$ were originally introduced by Morrey in \cite{morrey} to study the local regularity of solutions to second order elliptic partial differential equations. Nowadays these spaces have been studied intensively in the literature, and found a wide range of applications in harmonic analysis, potential theory and PDEs. Let us now recall the definition of the classical Morrey space. Let $1\leq p<\infty$ and $-n/p\leq\beta\leq0$. We denote by $\mathcal{M}^{p,\beta}(\mathbb R^n)$ the Morrey space of all $p$-locally integrable functions $f$ on $\mathbb R^n$ such that
\begin{equation*}
\begin{split}
\big\|f\big\|_{\mathcal{M}^{p,\beta}}:=&\sup_{\mathcal{B}\subset\mathbb R^n}\frac{1}{m(\mathcal{B})^{\beta/n}}
\bigg(\frac{1}{m(\mathcal{B})}\int_{\mathcal{B}}|f(x)|^p\,dx\bigg)^{1/p}\\
=&\sup_{\mathcal{B}\subset\mathbb R^n}\frac{1}{m(\mathcal{B})^{\beta/n+1/p}}
\big\|f\cdot\chi_{\mathcal{B}}\big\|_{L^p}<+\infty.
\end{split}
\end{equation*}
Note that $\mathcal{M}^{p,-n/p}(\mathbb R^n)=L^p(\mathbb R^n)$ and $\mathcal{M}^{p,0}(\mathbb R^n)=L^{\infty}(\mathbb R^n)$ by the Lebesgue
differentiation theorem. If $\beta<-n/p$ or $\beta>0$, then $\mathcal{M}^{p,\beta}(\mathbb R^n)=\Theta$, where $\Theta$ is the set of all functions equivalent to $0$ on $\mathbb R^n$.

For boundedness properties of the classical fractional integral $I_{\alpha}$ and the commutator $\big[b,I_{\alpha}\big]$ on Morrey spaces, we have
\begin{theorem}\label{13}
The following estimate holds:
\begin{enumerate}
  \item Let $0<\alpha<n$, $1<p<n/{\alpha}$ and $1/q=1/p-{\alpha}/n$. Suppose that $-n/p\leq\beta<(-\alpha)$, then the classical fractional integral $I_{\alpha}$ is bounded from $\mathcal{M}^{p,\beta}(\mathbb R^n)$ to $\mathcal{M}^{q,\alpha+\beta}(\mathbb R^n)$.
  \item If $b\in \mathrm{BMO}(\mathbb R^n)$, then the commutator operator $\big[b,I_{\alpha}\big]$ is bounded from $\mathcal{M}^{p,\beta}(\mathbb R^n)$ to $\mathcal{M}^{q,\alpha+\beta}(\mathbb R^n)$.
\end{enumerate}
\end{theorem}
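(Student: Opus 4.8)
The plan is to establish both parts by the classical localization argument for Morrey spaces, reducing everything to the Lebesgue-space bounds of Theorem \ref{11}. I would fix an arbitrary ball $\mathcal B=B(x_0,r)$ and split the input as $f=f_1+f_2$ with $f_1:=f\chi_{2\mathcal B}$ and $f_2:=f\chi_{(2\mathcal B)^{\complement}}$. The key bookkeeping observation, used repeatedly, is that under the Sobolev relation $1/q=1/p-\alpha/n$ one has
\[
\frac{\beta}{n}+\frac1p=\frac{\alpha+\beta}{n}+\frac1q,
\]
so the normalizing powers of $m(\mathcal B)$ appearing in $\|\cdot\|_{\mathcal M^{p,\beta}}$ and $\|\cdot\|_{\mathcal M^{q,\alpha+\beta}}$ cancel; note also that $\alpha+\beta<0$ by the hypothesis $\beta<-\alpha$.

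For part (1), the near part is controlled directly by Theorem \ref{11}(1): $\|I_\alpha(f_1)\chi_{\mathcal B}\|_{L^q}\le\|I_\alpha(f_1)\|_{L^q}\le C\|f\chi_{2\mathcal B}\|_{L^p}\le C\|f\|_{\mathcal M^{p,\beta}}\,m(2\mathcal B)^{\beta/n+1/p}$, and after dividing by $m(\mathcal B)^{(\alpha+\beta)/n+1/q}$ and using $m(2\mathcal B)\approx m(\mathcal B)$ together with the identity above this is $\le C\|f\|_{\mathcal M^{p,\beta}}$. For the far part, since $|x-y|\approx|x_0-y|$ whenever $x\in\mathcal B$ and $y\notin2\mathcal B$, I would decompose $(2\mathcal B)^{\complement}=\bigcup_{k\ge1}(2^{k+1}\mathcal B\setminus2^k\mathcal B)$, apply H\"older's inequality on each annulus, and use $\int_{2^{k+1}\mathcal B}|f|\le C\|f\|_{\mathcal M^{p,\beta}}m(2^{k+1}\mathcal B)^{\beta/n+1}$ to obtain, for every $x\in\mathcal B$,
\[
\big|I_\alpha(f_2)(x)\big|\le C\|f\|_{\mathcal M^{p,\beta}}\sum_{k\ge1}(2^kr)^{\alpha+\beta}\le C\|f\|_{\mathcal M^{p,\beta}}\,r^{\alpha+\beta},
\]
the geometric series converging precisely because $\alpha+\beta<0$. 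Taking the $L^q(\mathcal B)$ norm and dividing by $m(\mathcal B)^{(\alpha+\beta)/n+1/q}\approx r^{\alpha+\beta}m(\mathcal B)^{1/q}$ again gives $\le C\|f\|_{\mathcal M^{p,\beta}}$; combining the two parts proves (1).

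For part (2) the scheme is identical, but the far part requires splitting the symbol. Writing $b(x)-b(y)=[b(x)-b_{\mathcal B}]+[b_{\mathcal B}-b(y)]$ in the kernel representation of $[b,I_\alpha](f_2)$ decomposes it, for $x\in\mathcal B$, as $[b(x)-b_{\mathcal B}]I_\alpha(f_2)(x)$ plus a remainder. The first piece is handled by the pointwise bound on $|I_\alpha(f_2)(x)|$ from part (1) together with $\|(b-b_{\mathcal B})\chi_{\mathcal B}\|_{L^q}\le C\|b\|_{\mathrm{BMO}}m(\mathcal B)^{1/q}$, a consequence of the John--Nirenberg inequality. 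For the remainder I would again use the dyadic annular decomposition, bound $|b_{\mathcal B}-b(y)|\le|b_{\mathcal B}-b_{2^{k+1}\mathcal B}|+|b_{2^{k+1}\mathcal B}-b(y)|$, invoke the standard estimate $|b_{\mathcal B}-b_{2^{k+1}\mathcal B}|\le C(k+1)\|b\|_{\mathrm{BMO}}$, and apply H\"older with exponents $p$ and $p'$ on each annulus; this produces the series $\sum_{k\ge1}(k+1)(2^kr)^{\alpha+\beta}$, still summable since $\alpha+\beta<0$. Hence $|[b,I_\alpha](f_2)(x)|\le C\|b\|_{\mathrm{BMO}}\|f\|_{\mathcal M^{p,\beta}}r^{\alpha+\beta}$ on $\mathcal B$, while the near part $[b,I_\alpha](f_1)$ is dispatched by Theorem \ref{11}(2) exactly as in part (1).

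The argument is essentially routine bookkeeping; the one genuinely delicate point is the linear-in-$k$ factor $(k+1)$ arising from the oscillation of $b$ across dyadic scales in part (2). What makes everything work is the strict inequality $\beta<-\alpha$, which forces $\alpha+\beta<0$ and hence $\sum_k(k+1)2^{k(\alpha+\beta)}<\infty$; besides this, one only needs to keep careful track of the exponents of $m(\mathcal B)$ so that the Morrey normalizations cancel cleanly.
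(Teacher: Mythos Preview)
Your argument is correct and is precisely the standard localization proof for Morrey-space boundedness of fractional integrals and their commutators. Note, however, that the paper does not actually supply its own proof of Theorem~\ref{13}: it states the result as classical and refers the reader to Adams~\cite{adams,adams1} and Peetre~\cite{peetre} (and to \cite{komori,wang} for the weighted case). What you have written is exactly the argument one finds in that literature---split $f=f\chi_{2\mathcal B}+f\chi_{(2\mathcal B)^{\complement}}$, use the $L^p\!\to\!L^q$ bound of Theorem~\ref{11} on the near piece, control the far piece by dyadic annuli and the pointwise kernel bound, and for the commutator insert $b_{\mathcal B}$ and invoke John--Nirenberg together with the telescoping estimate $|b_{\mathcal B}-b_{2^{k+1}\mathcal B}|\le C(k+1)\|b\|_{\mathrm{BMO}}$. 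The only small point worth making explicit in your write-up is that the bound $\|(b-b_{2^{k+1}\mathcal B})\chi_{2^{k+1}\mathcal B}\|_{L^{p'}}\le C\|b\|_{\mathrm{BMO}}\,m(2^{k+1}\mathcal B)^{1/p'}$, needed after applying H\"older on each annulus in the remainder term of part~(2), is again a consequence of John--Nirenberg; you mention John--Nirenberg for the first piece but not the second.
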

For the proof of this result, see, for example, Adams \cite{adams,adams1} and Peetre \cite{peetre}. See also \cite{komori,wang} for the weighted case.

Motivated by Theorem \ref{12}, it is natural to ask whether the result in Theorem \ref{13} also holds for the generalized fractional integrals and related commutators. We give a positive answer to this question.

\begin{theorem}\label{generalizedmorrey}
The following estimate holds:
\begin{enumerate}
  \item Let $0<\alpha<n$, $1<p<n/{\alpha}$ and $1/q=1/p-{\alpha}/n$. Suppose that $-n/p\leq\beta<(-\alpha)$, then the generalized fractional integral operator $\mathcal{L}^{-\alpha/2}$ is bounded from $\mathcal{M}^{p,\beta}(\mathbb R^n)$ to $\mathcal{M}^{q,\alpha+\beta}(\mathbb R^n)$.
  \item If $b\in \mathrm{BMO}(\mathbb R^n)$, then the commutator operator $\big[b,\mathcal{L}^{-\alpha/2}\big]$ is bounded from $\mathcal{M}^{p,\beta}(\mathbb R^n)$ to $\mathcal{M}^{q,\alpha+\beta}(\mathbb R^n)$.
\end{enumerate}
\end{theorem}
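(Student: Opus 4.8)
The plan is to deduce Theorem~\ref{generalizedmorrey} from the already-established Theorem~\ref{13} together with the pointwise domination \eqref{dominate1}--\eqref{kernelk}. The key observation is that $\mathcal{M}^{p,\beta}(\mathbb{R}^n)$ is a lattice: if $|g(x)|\le C\,|h(x)|$ pointwise and $h\in\mathcal{M}^{q,\gamma}(\mathbb{R}^n)$, then $g\in\mathcal{M}^{q,\gamma}(\mathbb{R}^n)$ with $\|g\|_{\mathcal{M}^{q,\gamma}}\le C\|h\|_{\mathcal{M}^{q,\gamma}}$; this is immediate from the definition of the Morrey norm. For part (1), I would combine this with \eqref{dominate1}: since $\big|\mathcal{L}^{-\alpha/2}(f)(x)\big|\le C\cdot I_\alpha(|f|)(x)$ for every $x$, and since $\||f|\|_{\mathcal{M}^{p,\beta}}=\|f\|_{\mathcal{M}^{p,\beta}}$, part (1) of Theorem~\ref{13} applied to $|f|$ gives $I_\alpha(|f|)\in\mathcal{M}^{q,\alpha+\beta}(\mathbb{R}^n)$ with the correct norm bound, and the lattice property transfers this to $\mathcal{L}^{-\alpha/2}(f)$. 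The hypotheses on $p,q,\alpha,\beta$ are exactly those of Theorem~\ref{13}, so no extra constraints are needed.

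For part (2), the natural approach would be to find a pointwise majorant of $\big[b,\mathcal{L}^{-\alpha/2}\big](f)$ by $\big[b,I_\alpha\big](|f|)$ or some closely related quantity, but here the cancellation structure of the commutator obstructs a purely pointwise comparison: we only have $\big|[b,\mathcal{L}^{-\alpha/2}](f)(x)\big|\le\int_{\mathbb{R}^n}|b(x)-b(y)||\mathcal{K}_\alpha(x,y)||f(y)|\,dy\le C\int_{\mathbb{R}^n}|b(x)-b(y)|\,|x-y|^{\alpha-n}|f(y)|\,dy$, and while the right-hand side resembles $[b,I_\alpha]$ it is the operator with kernel $|b(x)-b(y)|\,|x-y|^{\alpha-n}$ rather than $[b(x)-b(y)]\,|x-y|^{\alpha-n}$. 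For $b\in\mathrm{BMO}$ this absolute-value version is \emph{not} bounded on Lebesgue spaces in general (it behaves like $I_\alpha(|b-b_B||f|)$ locally, which loses the logarithmic gain), so this route fails. Instead I would invoke the sharp maximal function machinery of Duong--Yan: one shows a good-$\lambda$ or pointwise estimate $M^{\#}_{\mathcal{L}}\big([b,\mathcal{L}^{-\alpha/2}](f)\big)(x)\lesssim \|b\|_{\mathrm{BMO}}\big(M_{r}(\mathcal{L}^{-\alpha/2}f)(x)+M_{\alpha r,r}(f)(x)\big)$ for suitable $r>1$ and a fractional maximal operator $M_{\alpha r,r}$, exactly as in the proof of Theorem~\ref{12}(2) in \cite{duong1}, and then one uses the known boundedness of $M^{\#}_{\mathcal{L}}$, of the Hardy--Littlewood maximal operator, and of fractional maximal operators on Morrey spaces (the Morrey analogue of the Fefferman--Stein inequality) to conclude. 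The Morrey mapping properties of $M$ and $M_{\alpha}$ needed here are classical (Adams, Chiarenza--Frasca).

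Carrying this out in detail, the \textbf{main obstacle} is the second part: one must first verify the sharp-maximal pointwise estimate in a form valid on Morrey spaces, which requires controlling the off-diagonal contribution $\int_{(2B)^{\complement}}|b(x)-b(y)|\,|\mathcal{K}_\alpha(x,y)-\mathcal{K}_\alpha(x_0,y)|\,|f(y)|\,dy$ together with the difference-operator kernel estimate for $\mathcal{L}^{-\alpha/2}-e^{-t\mathcal{L}}\mathcal{L}^{-\alpha/2}$ from \cite{deng,duong1}, and to couple these with Morrey-space Hölder inequalities on dyadic annuli rather than Lebesgue-space ones. A secondary technical point is checking that the Fefferman--Stein inequality $\|g\|_{\mathcal{M}^{q,\gamma}}\lesssim\|M^{\#}_{\mathcal{L}}g\|_{\mathcal{M}^{q,\gamma}}$ holds for the semigroup-adapted sharp maximal function on the relevant Morrey spaces; this is standard when $g$ lies in a suitable class (e.g. $g\in L^{q_0}$ for some $q_0$, which is guaranteed once we know $\mathcal{L}^{-\alpha/2}f$ is locally in $L^q$), so one should record the a priori finiteness reduction (approximating $f$ by truncations) before applying the inequality. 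Once these estimates are in place, the Morrey bound for $[b,\mathcal{L}^{-\alpha/2}]$ follows by the same interpolation-free argument used for $I_\alpha$ in Theorem~\ref{13}.
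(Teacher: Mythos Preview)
Your treatment of part (1) is correct and matches the paper: the pointwise domination \eqref{dominate1} together with the lattice property of Morrey spaces reduces the question to Theorem~\ref{13}(1), exactly as the paper indicates.

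For part (2) your route diverges from the paper's. The paper does not pass through the sharp maximal function $M^{\#}_{\mathcal L}$ and a Morrey-space Fefferman--Stein inequality; instead it points to the direct decomposition argument (carried out explicitly later in the proof of Theorem~\ref{thm4} for Campanato symbols, and in the cited reference \cite{wang2} in the weighted setting). That argument fixes a ball $\mathcal B$, splits $f=f\chi_{2\mathcal B}+f\chi_{(2\mathcal B)^{\complement}}$, writes
\[
\big[b,\mathcal L^{-\alpha/2}\big](f)=(b-b_{2\mathcal B})\,\mathcal L^{-\alpha/2}(f)-\mathcal L^{-\alpha/2}\big((b-b_{2\mathcal B})f_1\big)-\mathcal L^{-\alpha/2}\big((b-b_{2\mathcal B})f_2\big),
\]
and estimates the $L^q(\mathcal B)$ norm of each piece by H\"older's inequality, Theorem~\ref{12}, the kernel bound \eqref{kernelk}, and the BMO oscillation estimates on dyadic annuli $2^{k+1}\mathcal B\setminus 2^k\mathcal B$ (the $\beta_1=0$ analogue of Lemma~\ref{wanglemma1}). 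The resulting series $\sum_k k\,2^{k(\alpha+\beta)}$ converges because $\alpha+\beta<0$. No maximal-function control is needed, and in particular the difference-operator kernel estimate (Lemma~\ref{wanglemma2}) is not used here---it enters only for the $\mathcal C^{q,\gamma}_{\mathcal L}$ results.

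Your proposed approach is not wrong in principle, but it carries a real extra cost: you would need to establish $\|g\|_{\mathcal M^{q,\alpha+\beta}}\lesssim\|M^{\#}_{\mathcal L}g\|_{\mathcal M^{q,\alpha+\beta}}$ for the \emph{semigroup-adapted} sharp maximal function on Morrey spaces. This is not in the literature in ready-to-use form (Martell's good-$\lambda$ inequality gives the $L^p$ version, and the classical $M^{\#}$ version on Morrey spaces is known, but combining the two is nontrivial and would itself require the kind of dyadic-annulus estimates that the direct argument performs anyway). The direct approach is both shorter and avoids this detour, which is why the paper takes it.
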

The proof is based on the pointwise inequality \eqref{kernelk} and an estimate on the kernel of the
difference operator $\mathcal{L}^{-\alpha/2}-e^{-t\mathcal L}\mathcal{L}^{-\alpha/2}$ in \cite{deng,duong1}(with $0<\alpha<n$).A weighted version of this result has been obtained by the author in \cite[Theorem 1.4]{wang2}(see also\cite[Corollaries 18 and 24]{wang3}).
\begin{remark}
When $-n/p\leq\beta<-\alpha$ and $1/q=1/p-{\alpha}/n$, one can see that
\begin{equation*}
-n/q\leq \alpha+\beta<0.
\end{equation*}
\end{remark}
If $b\in \mathrm{Lip}_{\beta_1}(\mathbb R^n)$ with $0<\beta_1\leq1$ and $0<\alpha+\beta_1<n$, then by estimates \eqref{point1},\eqref{point2} and Theorem \ref{13}, the commutators $\big[b,I_{\alpha}\big]$ and $\big[b,\mathcal{L}^{-\alpha/2}\big]$ are bounded operators from $\mathcal{M}^{p,\beta}(\mathbb R^n)$ to $\mathcal{M}^{q,\gamma}(\mathbb R^n)$, where
\begin{equation*}
-\frac{n}{\,p\,}\leq\beta<-(\alpha+\beta_1),~~~\frac{\,1\,}{q}=\frac{\,1\,}{p}-\frac{\alpha+\beta_1}{n},~~~\mbox{and}~~~ \gamma=\alpha+\beta_1+\beta.
\end{equation*}
In the present situation, we see that
\begin{equation*}
\frac{\,1\,}{q}=\frac{\,1\,}{p}-\frac{\alpha+\beta_1}{n}<\frac{\,1\,}{p}\Longrightarrow q>p,
\end{equation*}
and
\begin{equation*}
\gamma=\beta+(\alpha+\beta_1)\Longrightarrow\gamma>\beta.
\end{equation*}
This result was established by the author in \cite[Theorems 1.5 and 1.6]{wang2}.

It is well known that the classical Morrey--Campanato spaces play an important role in the study of partial differential equations and harmonic analysis, see \cite{deng2,janson,shi2} for more details.

\begin{definition}\label{17}
Let $1\leq p<\infty$ and $-n/p\leq\beta\leq1$. A locally integrable function $f$ is said to belong to the Morrey--Campanato space $\mathcal{C}^{p,\beta}(\mathbb R^n)$, if
\begin{equation}\label{quantity1}
\big\|f\big\|_{\mathcal{C}^{p,\beta}}:=\sup_{\mathcal{B}\subset\mathbb R^n}
\frac{1}{m(\mathcal{B})^{\beta/n}}\bigg(\frac{1}{m(\mathcal{B})}
\int_{\mathcal{B}}\big|f(x)-f_{\mathcal{B}}\big|^{p}\,dx\bigg)^{1/{p}}<+\infty,
\end{equation}
where $f_{\mathcal{B}}$ denotes the mean value of $f$ and $m(\mathcal{B})$ is the Lebesgue measure of the ball $\mathcal{B}$. The quantity $\big\|f\big\|_{\mathcal{C}^{p,\beta}}$ is called the Morrey--Campanato norm of $f$.
\end{definition}
The space $\mathcal{C}^{p,\beta}(\mathbb R^n)$ was first introduced by Campanato in \cite{cam}, and was studied extensively in the literature. Both the space $\mathrm{BMO}(\mathbb R^n)$ and (homogenous) Lipschitz function space $\mathrm{Lip}_{\beta}(\mathbb R^n)$ are special cases of Morrey--Campanato spaces.
\begin{itemize}
  \item It is well known that when $\beta=0$ and $1\leq p<\infty$, then
\begin{equation*}
\mathcal{C}^{p,\beta}(\mathbb R^n)=\mathrm{BMO}(\mathbb R^n)
\end{equation*}
with equivalent norms, see \cite{deng,grafakos,lu} for example.
  \item If $1\leq p<\infty$ and $0<\beta\leq1$, then
\begin{equation}\label{equilip}
\mathcal{C}^{p,\beta}(\mathbb R^n)=\mathrm{Lip}_{\beta}(\mathbb R^n)
\end{equation}
with equivalent norms, see \cite{janson} and \cite{paluszynski} for example.
  \item We remark that the equation \eqref{equilip} allows us to give an integral characterization of the (homogeneous) Lipschitz function space $\mathrm{Lip}_{\beta}(\mathbb R^n)$. This fact further leads to a generalization of the classical Sobolev embedding theorem. It is also well known that $\mathcal{C}^{1,1/p-1}(\mathbb R^n)$ is the dual space of Hardy space $H^p(\mathbb R^n)$ when $0<p<1$, and $\mathcal{C}^{1,0}(\mathbb R^n)=\mathrm{BMO}(\mathbb R^n)$ is the dual space of Hardy space $H^1(\mathbb R^n)$, see \cite{grafakos} and \cite{janson} for further details.
  \item If $-n/p\leq\beta<0$ and $1\leq p<\infty$, then in this case, it can be shown that
\begin{equation*}
\mathcal{C}^{p,\beta}(\mathbb R^n)\supseteq \mathcal{M}^{p,\beta}(\mathbb R^n).
\end{equation*}
This inclusion relation can be found, for instance, in \cite{shi1,shi2}.
\end{itemize}

As mentioned above, many authors are interested in the study of commutators for which the symbol functions belong to BMO spaces and Lipschitz spaces. It is interesting to consider the boundedness of commutators with symbol functions belonging to $\mathcal{C}^{p_1,\beta_1}(\mathbb R^n)$, when $-n/{p_1}\leq\beta_1<0$ and $1\leq p_1<\infty$. It was proved by Shi and Lu that the commutator $\big[b,I_{\alpha}\big]$ is a bounded operator from $\mathcal{M}^{p,\beta}(\mathbb R^n)$ to $\mathcal{C}^{q,\gamma}(\mathbb R^n)$ for appropriate indices, under the assumption that $b\in \mathcal{C}^{p_1,\beta_1}(\mathbb R^n)$ with $-n/{p_1}\leq\beta_1<0$ and $1\leq p_1<\infty$. Moreover, some new characterizations of Morrey--Campanato spaces via the boundedness of $\big[b,I_{\alpha}\big]$ were also given, by new methods instead of the sharp maximal
function theorem. See \cite[Theorem 1.1]{shi2}. For the corresponding results of commutators associated with Calder\'{o}n--Zygmund singular integral operators, see \cite[Theorems 1.2 and 1.3]{shi1}.

The main purpose of this paper is to prove that the commutator $\big[b,\mathcal{L}^{-\alpha/2}\big]$ is bounded from $\mathcal{M}^{p,\beta}(\mathbb R^n)$ to $\mathcal{C}^{q,\gamma}_{\mathcal{L}}(\mathbb R^n)$ for suitable indices, which is an extension of the result of Shi--Lu. Here the Morrey--Campanato space $\mathcal{C}^{q,\gamma}_{\mathcal{L}}(\mathbb R^n)$ is defined for all functions $f$ with suitable bounds on growth. See Definition \ref{wangdefi} below.

\section{New Morrey--Campanato spaces associated with $\mathcal{L}$}
To study the operators $\mathcal{L}^{-\alpha/2}$ and $\big[b,\mathcal{L}^{-\alpha/2}\big]$, we will introduce the Morrey--Campanato space $\mathcal{C}^{p,\gamma}_{\mathcal{L}}(\mathbb R^n)$ associated with the operator $\mathcal{L}$. Let us first introduce some notations and
definitions related to the new Morrey--Campanato space.

A family of operators $\{\mathbf{A}_t\}_{t>0}$ is said to be ``generalized approximation to the identity", if for every $t>0$, $\mathbf{A}_t$ is represented by the kernel $p_t(x,y)$ that satisfies an upper bound
\begin{equation*}
\big|p_t(x,y)\big|\leq h_t(x,y):=t^{-n/2}g\Big(\frac{|x-y|}{\sqrt{t\,}}\Big),
\end{equation*}
for all $x,y\in\mathbb R^n$ and $t>0$. Here $g$ is a positive, bounded and decreasing function satisfying
\begin{equation}\label{15}
\lim_{r\to\infty}r^{n+\varepsilon}g(r)=0
\end{equation}
for some $\varepsilon>0$.

Let $\varepsilon$ be as in \eqref{15} and let $0<\varrho<\varepsilon$. For any $1\leq p<\infty$, a locally integrable function $f$ is said to be a function of $(p,\varrho)$-type, if it satisfies
\begin{equation}\label{16}
\bigg(\int_{\mathbb R^n}\frac{|f(x)|^p}{(1+|x|)^{n+\varrho}}dx\bigg)^{1/p}\leq C<+\infty,
\end{equation}
and we denote by $M_{(p,\varrho)}$ the collection of all functions of $(p,\varrho)$-type. The smallest bound $C$ satisfying the condition \eqref{16} is then taken to be the norm of $f$, and is denoted by $\|f\|_{M_{(p,\varrho)}}$. It is easy to see that $M_{(p,\varrho)}$ is a Banach function space under the norm $\|f\|_{M_{(p,\varrho)}}$. We set
\begin{equation*}
M_p:=\bigcup_{\varrho:0<\varrho<\varepsilon}M_{(p,\varrho)}.
\end{equation*}
For any $f\in L^p(\mathbb R^n)$ with $p\geq1$, the sharp maximal function $M^{\#}_{\mathbf{A}}(f)$ associated with ``generalized approximation to the identity" is defined as follows:
\begin{equation*}
M^{\#}_{\mathbf{A}}(f)(x)=\sup_{x\in \mathcal{B}}\frac{1}{m(\mathcal{B})}
\int_{\mathcal{B}}\big|f(y)-\mathbf{A}_{t_{\mathcal{B}}}f(y)\big|\,dy,
\end{equation*}
where $t_{\mathcal{B}}=r_{\mathcal{B}}^2$, $r_{\mathcal{B}}$ is the radius of the ball $\mathcal{B}$,
\begin{equation*}
\mathbf{A}_{t_{\mathcal{B}}}f(y)=\int_{\mathbb R^n}p_{t_{\mathcal{B}}}(y,z)f(z)\,dz,
\end{equation*}
and the supremum is taken over all balls $\mathcal{B}$ containing the point $x$. The sharp maximal function $M^{\#}_{\mathbf{A}}$ was first introduced and studied by Martell in \cite{martell}. We remark that our analytic semigroup $\big\{e^{-t\mathcal L}\big\}_{t>0}$ is ``generalized approximation to the identity". In particular, we give

\begin{definition}\label{martel}
For every $f\in L^p(\mathbb R^n)$ with $p\geq1$, the sharp maximal function $M^{\#}_{\mathcal{L}}(f)$ is defined by
\begin{equation*}
M^{\#}_{\mathcal{L}}(f)(x):=\sup_{x\in \mathcal{B}}\frac{1}{m(\mathcal{B})}
\int_{\mathcal{B}}\big|f(y)-e^{-t_{\mathcal{B}}\mathcal{L}}f(y)\big|\,dy,
\end{equation*}
where $t_{\mathcal{B}}=r_{\mathcal{B}}^2$, $r_{\mathcal{B}}$ is the radius of the ball $\mathcal{B}$ and the supremum is taken over all balls $\mathcal{B}$ in $\mathbb R^n$.
\end{definition}

\begin{definition}\label{newbmo}
Let $f\in L^p(\mathbb R^n)$ with $p\geq1$ and $f\in M_p$. We say that a function $f$ is in the space $\mathrm{BMO}_{\mathcal{L}}(\mathbb R^n)$ associated with the operator $\mathcal{L}$, if the sharp maximal function $M^{\#}_{\mathcal{L}}(f)\in L^{\infty}(\mathbb R^n)$, and we define
\begin{equation*}
\big\|f\big\|_{\mathrm{BMO}_{\mathcal{L}}}=\big\|M^{\#}_{\mathcal{L}}(f)\big\|_{L^{\infty}}.
\end{equation*}
\end{definition}

The new $\mathrm{BMO}_{\mathcal{L}}$ space associated with the operator $\mathcal{L}$ was introduced and studied by Duong and Yan in \cite{duong3}(see also \cite{duong2,deng2,deng}). The idea is that the quantity $e^{-t\mathcal{L}}(f)$ can be viewed as an average version of $f$(at the scale $t$) and the quantity $e^{-t_{\mathcal{B}}\mathcal{L}}f(x)$ can be used to replace the mean value $f_{\mathcal{B}}$ in the definition of the classical BMO space. Here $t_{\mathcal{B}}$ is equal to the square of the radius of $\mathcal{B}$. Notice that $\mathrm{BMO}_{\mathcal{L}}$ is a semi-normed vector space, with the semi-norm vanishing on the kernel space $\mathrm{Ker}_{\mathcal{L}}$ defined by
\begin{equation*}
\mathrm{Ker}_{\mathcal{L}}:=\Big\{f\in M_p:e^{-t\mathcal{L}}(f)=f,~~\mbox{for all}~~ t>0\Big\}.
\end{equation*}
The class of $\mathrm{BMO}_{\mathcal{L}}$ functions (modulo $\mathrm{Ker}_{\mathcal{L}}$) is a Banach function space.
\begin{itemize}
  \item A natural question arising from Definition \ref{newbmo} is to compare the classical BMO space and the $\mathrm{BMO}_{\mathcal{L}}$ space associated with the operator $\mathcal{L}$. Denote by $e^{t\Delta}$ the heat semigroup on $\mathbb R^n$. It can be shown that the classical BMO space (modulo all constant functions) and the $\mathrm{BMO}_{\Delta}$ space (modulo $\mathrm{Ker}_{\Delta}$) coincide, and their norms are equivalent, see \cite{duong3,duong2}.
  \item Assume that for every $t>0$, the equation
  \begin{equation*}
  e^{-t\mathcal{L}}(\mathbf{1})(x)=\mathbf{1}
  \end{equation*}
  holds for almost everywhere $x\in\mathbb R^n$, that is,
  \begin{equation*}
  \int_{\mathbb R^n}\mathcal{P}_t(x,y)\,dy=1
  \end{equation*}
  for almost all $x\in\mathbb R^n$. Then we have $\mathrm{BMO}(\mathbb R^n)\subset\mathrm{BMO}_{\mathcal{L}}(\mathbb R^n)$, and there exists a positive constant $C>0$ such that
  \begin{equation}\label{18}
  \big\|f\big\|_{\mathrm{BMO}_{\mathcal{L}}}\leq C\big\|f\big\|_{\mathrm{BMO}}.
  \end{equation}
  However, the converse inequality does not hold in general. See Martell \cite[Proposition 3.1]{martell} and Deng--Duong--Sikora--Yan \cite[Proposition 2.3]{deng}. As pointed out in \cite{deng}, the condition $e^{-t\mathcal{L}}(\mathbf{1})(x)=\mathbf{1}$ is also necessary for \eqref{18}. Indeed, it follows from \eqref{18} that $\big\|\mathbf{1}\big\|_{\mathrm{BMO}_{\mathcal{L}}}=0$. This in turn implies that for all $t>0$, $e^{-t\mathcal{L}}(\mathbf{1})(x)=\mathbf{1}$ holds for almost everywhere $x\in\mathbb R^n$.
\end{itemize}
For further details about the properties and applications of the spaces $\mathrm{BMO}_{\mathcal{L}}(\mathbb R^n)$, we refer the reader to \cite{deng2,deng,duong3,duong2} and the references therein.

Motivated by Definitions \ref{17} and \ref{newbmo}, we now introduce our space $\mathcal{C}^{p,\gamma}_{\mathcal{L}}(\mathbb R^n)$ associated with $\mathcal{L}$, by using the function $e^{-t_{\mathcal{B}}\mathcal{L}}f(x)$ to replace the average $f_{\mathcal{B}}$ in the Morrey--Campanato norm \eqref{quantity1}, and $t_{\mathcal{B}}$ equals the square of the radius of $\mathcal{B}$.

\begin{definition}\label{wangdefi}
Assume that the semigroup $\big\{e^{-t\mathcal L}\big\}_{t>0}$ has a kernel $\mathcal{P}_t(x,y)$ satisfying the Gaussian upper bound \eqref{G}.
Let $1\leq p<\infty$ and $-n/p\leq\gamma\leq1$. A locally integrable function $f\in M_p$ is said to belong to the Morrey--Campanato space $\mathcal{C}^{p,\gamma}_{\mathcal{L}}(\mathbb R^n)$ associated with the operator $\mathcal{L}$, if
\begin{equation*}
\big\|f\big\|_{\mathcal{C}^{p,\gamma}_{\mathcal{L}}}:=\sup_{\mathcal{B}\subset\mathbb R^n}
\frac{1}{m(\mathcal{B})^{\gamma/n}}\bigg(\frac{1}{m(\mathcal{B})}
\int_{\mathcal{B}}\big|f(x)-e^{-t_{\mathcal{B}}\mathcal{L}}f(x)\big|^{p}\,dx\bigg)^{1/{p}}<+\infty,
\end{equation*}
where $t_{\mathcal{B}}=r_{\mathcal{B}}^2$, $r_{\mathcal{B}}$ is the radius of the ball $\mathcal{B}$ and the supremum is taken over all balls $\mathcal{B}$ in $\mathbb R^n$.
\end{definition}
\begin{itemize}
  \item A natural question arising from Definition \ref{wangdefi} is to compare the classical Morrey--Campanato space $\mathcal{C}^{p,\gamma}(\mathbb R^n)$ with the new function space $\mathcal{C}^{p,\gamma}_{\mathcal{L}}(\mathbb R^n)$. Following the idea of \cite{deng2,duong3,duong2}, we can also show that when $\mathcal{L}$ is the Laplacian $\Delta$ in $\mathbb R^n$, then the corresponding space $\mathcal{C}^{p,\gamma}_{\mathcal{L}}(\mathbb R^n)$ coincides with the classical Morrey--Campanato space $\mathcal{C}^{p,\gamma}(\mathbb R^n)$, and their norms are equivalent.
  \item When $\gamma=0$ and $1\leq p<\infty$, then
  \begin{equation*}
  \mathcal{C}^{p,\gamma}_{\mathcal{L}}(\mathbb R^n)=\mathrm{BMO}_{\mathcal{L}}(\mathbb R^n)
  \end{equation*}
  with equivalent norms, since a variant of the John--Nirenberg inequality holds for functions in $\mathrm{BMO}_{\mathcal{L}}(\mathbb R^n)$. See \cite{deng2,duong3,duong2} for more details.
  \item Using a similar argument to that used in the proof of \cite[Proposition 3.1]{martell} and \cite[Proposition 2.3]{deng}, we can also show that $\mathcal{C}^{p,\gamma}(\mathbb R^n)$ is a subspace of $\mathcal{C}^{p,\gamma}_{\mathcal{L}}(\mathbb R^n)$, under the assumption that $\mathcal{L}$ satisfies a conservation property of the semigroup $e^{-t\mathcal{L}}(\mathbf{1})=\mathbf{1}$ for every $t>0$.
\end{itemize}
We have the following lemma.
\begin{lemma}\label{inclusion}
Let $1\leq p<\infty$ and $-n/p\leq\gamma\leq1$ with $\gamma\neq0$. Assume that the semigroup $\big\{e^{-t\mathcal L}\big\}_{t>0}$ has a kernel $\mathcal{P}_t(x,y)$ satisfying the Gaussian upper bound \eqref{G} and for every $t>0$, the equation
  \begin{equation*}
  e^{-t\mathcal{L}}(\mathbf{1})(x)=\mathbf{1}
  \end{equation*}
  holds for almost all $x\in\mathbb R^n$. Then we have
  $\mathcal{C}^{p,\gamma}(\mathbb R^n)\subset\mathcal{C}^{p,\gamma}_{\mathcal{L}}(\mathbb R^n)$, and there exists a positive constant $C>0$ such that
  \begin{equation}\label{110}
  \big\|f\big\|_{\mathcal{C}^{p,\gamma}_{\mathcal{L}}}\leq C\big\|f\big\|_{\mathcal{C}^{p,\gamma}}.
  \end{equation}
\end{lemma}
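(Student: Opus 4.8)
The plan is to estimate, for an arbitrary ball $\mathcal{B}=B(x_0,r)$, the quantity
\begin{equation*}
\frac{1}{m(\mathcal{B})^{\gamma/n}}\bigg(\frac{1}{m(\mathcal{B})}\int_{\mathcal{B}}\big|f(x)-e^{-t_{\mathcal{B}}\mathcal{L}}f(x)\big|^{p}\,dx\bigg)^{1/p}
\end{equation*}
by $C\|f\|_{\mathcal{C}^{p,\gamma}}$, with $t_{\mathcal{B}}=r^2$. Using the conservation property $e^{-t_{\mathcal{B}}\mathcal{L}}(\mathbf{1})=\mathbf{1}$, I would write $f(x)-e^{-t_{\mathcal{B}}\mathcal{L}}f(x)=(f(x)-f_{\mathcal{B}})-e^{-t_{\mathcal{B}}\mathcal{L}}(f-f_{\mathcal{B}})(x)$, and split the $L^p(\mathcal{B})$ norm into
\begin{equation*}
\big\|f-f_{\mathcal{B}}\big\|_{L^p(\mathcal{B})}+\big\|e^{-t_{\mathcal{B}}\mathcal{L}}(f-f_{\mathcal{B}})\big\|_{L^p(\mathcal{B})}.
\end{equation*}
The first term is directly controlled by $\|f\|_{\mathcal{C}^{p,\gamma}}\,m(\mathcal{B})^{\gamma/n+1/p}$ by Definition \ref{17}, which after dividing gives exactly $C\|f\|_{\mathcal{C}^{p,\gamma}}$.

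The second term is where the work lies. I would decompose $f-f_{\mathcal{B}}=(f-f_{\mathcal{B}})\chi_{2\mathcal{B}}+\sum_{j\geq1}(f-f_{\mathcal{B}})\chi_{2^{j+1}\mathcal{B}\setminus 2^{j}\mathcal{B}}$, where $2^j\mathcal{B}=B(x_0,2^jr)$. For the local part $(f-f_{\mathcal{B}})\chi_{2\mathcal{B}}$, I would use the $L^p$-boundedness (in fact $L^p\to L^p$ contraction, or at worst boundedness) of $e^{-t_{\mathcal{B}}\mathcal{L}}$, which follows from the Gaussian bound \eqref{G}, together with $\|f-f_{\mathcal{B}}\|_{L^p(2\mathcal{B})}\leq \|f-f_{2\mathcal{B}}\|_{L^p(2\mathcal{B})}+|f_{2\mathcal{B}}-f_{\mathcal{B}}|\,m(2\mathcal{B})^{1/p}$; the standard telescoping estimate $|f_{2\mathcal{B}}-f_{\mathcal{B}}|\leq C\,m(\mathcal{B})^{\gamma/n}\|f\|_{\mathcal{C}^{p,\gamma}}$ (valid for $\gamma\geq0$, and for $\gamma<0$ one gets $|f_{2\mathcal{B}}-f_{\mathcal{B}}|\leq C\,m(2\mathcal{B})^{\gamma/n}\|f\|_{\mathcal{C}^{p,\gamma}}$) then yields the desired bound. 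For the far part, for $x\in\mathcal{B}$ and $y\in 2^{j+1}\mathcal{B}\setminus 2^j\mathcal{B}$ we have $|x-y|\gtrsim 2^j r$, so the Gaussian kernel gives
\begin{equation*}
\big|e^{-t_{\mathcal{B}}\mathcal{L}}\big((f-f_{\mathcal{B}})\chi_{2^{j+1}\mathcal{B}\setminus2^j\mathcal{B}}\big)(x)\big|\leq \frac{C}{r^{n}}\int_{2^{j+1}\mathcal{B}}e^{-A\frac{|x-y|^2}{r^2}}|f(y)-f_{\mathcal{B}}|\,dy\leq C\,e^{-cA4^j}\,r^{-n}\big\|f-f_{\mathcal{B}}\big\|_{L^1(2^{j+1}\mathcal{B})},
\end{equation*}
and $\|f-f_{\mathcal{B}}\|_{L^1(2^{j+1}\mathcal{B})}\leq \big(\|f-f_{2^{j+1}\mathcal{B}}\|_{L^1(2^{j+1}\mathcal{B})}+|f_{2^{j+1}\mathcal{B}}-f_{\mathcal{B}}|\,m(2^{j+1}\mathcal{B})\big)$; by Hölder and the telescoping sum over $j+1$ scales this is bounded by $C(j+1)\,2^{jn}m(\mathcal{B})^{1+\gamma/n}\|f\|_{\mathcal{C}^{p,\gamma}}$ (using $-n/p\leq\gamma\leq1$, so $2^{j\gamma}$ stays controlled). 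Taking $L^p(\mathcal{B})$ norms in $x$, summing the series $\sum_j (j+1)2^{jn}e^{-cA4^j}<\infty$, and dividing by $m(\mathcal{B})^{\gamma/n+1/p}$ finishes the far part.

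The main obstacle I anticipate is bookkeeping the dependence on the sign of $\gamma$ in the telescoping estimates $|f_{2^{j}\mathcal{B}}-f_{\mathcal{B}}|$: when $\gamma>0$ the factor $2^{j\gamma}$ grows but is dominated by the Gaussian decay, while when $\gamma<0$ it decays, and in the borderline regime one must be careful that the sums $\sum_j (j+1)2^{j(n+\gamma)}e^{-cA4^j}$ and the analogous $L^1$-to-$L^p$ Hölder losses remain summable — they do, since the super-exponential Gaussian decay $e^{-cA4^j}$ beats any polynomial-in-$2^j$ growth. A secondary point requiring care is justifying that $e^{-t_{\mathcal{B}}\mathcal{L}}$ acts on $(f-f_{\mathcal{B}})\chi_{2\mathcal{B}}\in L^p$ and that the decomposition-and-reassembly is legitimate for $f\in M_p$; this follows because $f\in\mathcal{C}^{p,\gamma}(\mathbb{R}^n)$ with $\gamma\leq1$ has at most polynomial growth and hence lies in $M_p$, so all the integrals above converge absolutely, allowing Fubini and the termwise estimates.
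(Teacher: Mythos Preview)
Your proposal is correct and follows essentially the same route as the paper: use the conservation property to write $f-e^{-t_{\mathcal{B}}\mathcal{L}}f=(f-f_{\mathcal{B}})-e^{-t_{\mathcal{B}}\mathcal{L}}(f-f_{\mathcal{B}})$, decompose $\mathbb{R}^n$ into $2\mathcal{B}$ and dyadic annuli, apply the Gaussian kernel bound on each annulus, and control $\|f-f_{\mathcal{B}}\|_{L^1(2^{k}\mathcal{B})}$ by the telescoping estimate $\lesssim k\,m(2^k\mathcal{B})^{1+\gamma/n}\|f\|_{\mathcal{C}^{p,\gamma}}$. The only cosmetic differences are that the paper extracts the weaker polynomial decay $|\mathcal{P}_{t_{\mathcal{B}}}(x,y)|\lesssim t_{\mathcal{B}}^{n/2}/|x-y|^{2n}$ (giving factors $2^{-kn}$, summable since $\gamma<n$) rather than your full Gaussian $e^{-cA4^j}$, and it obtains a uniform \emph{pointwise} bound $|e^{-t_{\mathcal{B}}\mathcal{L}}f(x)-f_{\mathcal{B}}|\leq C\,m(\mathcal{B})^{\gamma/n}\|f\|_{\mathcal{C}^{p,\gamma}}$ for $x\in\mathcal{B}$ throughout, rather than invoking $L^p$-boundedness of the semigroup for the local piece.
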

Here we give the proof for the sake of completeness.
\begin{proof}
Assume that $f\in \mathcal{C}^{p,\gamma}(\mathbb R^n)$ with $1\leq p<\infty$ and $-n/p\leq\gamma\leq1$ with $\gamma\neq0$. Let $\mathcal{B}=B(x_0,r_{\mathcal{B}})$ be a fixed ball centered at $x_0$ and with radius $r_{\mathcal{B}}$, and let $t_{\mathcal{B}}=r_{\mathcal{B}}^2$. Then we get
\begin{equation}\label{21}
\big|f(x)-e^{-t_{\mathcal{B}}\mathcal{L}}f(x)\big|\leq\big|f(x)-f_{\mathcal{B}}\big|+
\big|e^{-t_{\mathcal{B}}\mathcal{L}}f(x)-f_{\mathcal{B}}\big|.
\end{equation}
Note that by the assumption, we have
\begin{equation*}
\int_{\mathbb R^n}\mathcal{P}_{t_{\mathcal{B}}}(x,y)\,dy=1\Longrightarrow\int_{\mathbb R^n}\mathcal{P}_{t_{\mathcal{B}}}(x,y)\cdot f_{\mathcal{B}}\,dy=f_{\mathcal{B}}
\end{equation*}
for almost all $x\in\mathbb R^n$. Thus,
\begin{equation*}
\big|e^{-t_{\mathcal{B}}\mathcal{L}}f(x)-f_{\mathcal{B}}\big|
\leq\int_{\mathbb R^n}\big|\mathcal{P}_{t_{\mathcal{B}}}(x,y)\big|\cdot\big|f(y)-f_{\mathcal{B}}\big|\,dy.
\end{equation*}
It is clear that for any $x\in \mathcal{B}$ and $y\in 2\mathcal{B}$, we have
\begin{equation*}
\big|\mathcal{P}_{t_{\mathcal{B}}}(x,y)\big|\leq\frac{C}{(t_{\mathcal{B}})^{n/2}}\leq\frac{C}{m(2\mathcal{B})}.
\end{equation*}
Moreover, for any $x\in \mathcal{B}$ and $y\in(2\mathcal{B})^{\complement}$, we have
\begin{equation*}
|x-y|\geq\frac{|y-x_0|}{2}\geq r_{\mathcal{B}}.
\end{equation*}
Consequently, in this case,
\begin{equation*}
\big|\mathcal{P}_{t_{\mathcal{B}}}(x,y)\big|\leq C\cdot\frac{(t_{\mathcal{B}})^{n/2}}{|x-y|^{2n}}
\leq C\cdot\frac{(t_{\mathcal{B}})^{n/2}}{|y-x_0|^{2n}}
\end{equation*}
by using \eqref{G}. We then decompose $\mathbb R^n$ into a geometrically increasing sequence of concentric balls, and obtain
\begin{equation*}
\begin{split}
&\big|e^{-t_{\mathcal{B}}\mathcal{L}}f(x)-f_{\mathcal{B}}\big|\\
&\leq\int_{2\mathcal{B}}\big|\mathcal{P}_{t_{\mathcal{B}}}(x,y)\big|\cdot\big|f(y)-f_{\mathcal{B}}\big|\,dy
+\sum_{k=1}^{\infty}\int_{2^{k+1}\mathcal{B}\setminus 2^k \mathcal{B}}\big|\mathcal{P}_{t_{\mathcal{B}}}(x,y)\big|\cdot\big|f(y)-f_{\mathcal{B}}\big|\,dy\\
&\leq C\frac{1}{m(2\mathcal{B})}\int_{2\mathcal{B}}\big|f(y)-f_{\mathcal{B}}\big|\,dy
+C\sum_{k=1}^{\infty}\int_{2^{k+1}\mathcal{B}\setminus 2^k \mathcal{B}}
\frac{(t_{\mathcal{B}})^{n/2}}{|y-x_0|^{2n}}\cdot\big|f(y)-f_{\mathcal{B}}\big|\,dy\\
&\leq C\frac{1}{m(2\mathcal{B})}\int_{2\mathcal{B}}\big|f(y)-f_{\mathcal{B}}\big|\,dy
+C\sum_{k=1}^{\infty}\frac{1}{2^{kn}}\frac{1}{m(2^{k+1}\mathcal{B})}\int_{2^{k+1}\mathcal{B}}\big|f(y)-f_{\mathcal{B}}\big|\,dy.
\end{split}
\end{equation*}
For any $k\in \mathbb{N}$, it can be verified that (see Lemma \ref{wanglemma1} below)
\begin{equation*}
\frac{1}{m(2^k\mathcal{B})}\int_{2^k\mathcal{B}}\big|f(y)-f_{\mathcal{B}}\big|\,dy
\leq Ck\Big[m\big(2^k\mathcal{B}\big)^{\gamma/n}\cdot\big\|f\big\|_{\mathcal{C}^{p,\gamma}}\Big].
\end{equation*}
Applying the above estimate, we can deduce that for almost all $x\in \mathcal{B}$,
\begin{equation}\label{22}
\begin{split}
&\big|e^{-t_{\mathcal{B}}\mathcal{L}}f(x)-f_{\mathcal{B}}\big|\\
&\leq C\bigg[m(2\mathcal{B})^{\gamma/n}+\sum_{k=1}^{\infty}\frac{1}{2^{kn}}(k+1)m\big(2^{k+1}\mathcal{B}\big)^{\gamma/n}\bigg]
\big\|f\big\|_{\mathcal{C}^{p,\gamma}}\\
&\leq C\bigg[m(\mathcal{B})^{\gamma/n}+\sum_{k=1}^{\infty}\frac{k}{2^{k(n-\gamma)}}m(\mathcal{B})^{\gamma/n}\bigg]
\big\|f\big\|_{\mathcal{C}^{p,\gamma}}\\
&\leq Cm(\mathcal{B})^{\gamma/n}\big\|f\big\|_{\mathcal{C}^{p,\gamma}},
\end{split}
\end{equation}
where the last inequality follows from the fact that $\gamma<n$. Therefore, by using Minkowski's inequality, \eqref{21} and \eqref{22}, we obtain
\begin{equation*}
\begin{split}
&\frac{1}{m(\mathcal{B})^{\gamma/n}}\bigg(\frac{1}{m(\mathcal{B})}
\int_{\mathcal{B}}\big|f(x)-e^{-t_{\mathcal{B}}\mathcal{L}}f(x)\big|^{p}\,dx\bigg)^{1/{p}}\\
&\leq\frac{1}{m(\mathcal{B})^{\gamma/n}}\bigg(\frac{1}{m(\mathcal{B})}
\int_{\mathcal{B}}\big|f(x)-f_{\mathcal{B}}\big|^{p}\,dx\bigg)^{1/{p}}
+\frac{1}{m(\mathcal{B})^{\gamma/n}}\bigg(\frac{1}{m(\mathcal{B})}
\int_{\mathcal{B}}\big|e^{-t_{\mathcal{B}}\mathcal{L}}f(x)-f_{\mathcal{B}}\big|^{p}\,dx\bigg)^{1/{p}}\\
&\leq C\big\|f\big\|_{\mathcal{C}^{p,\gamma}}.
\end{split}
\end{equation*}
Hence, the desired result \eqref{110} follows by taking the supremum over all balls $\mathcal{B}$ in $\mathbb R^n$, and then the proof of Lemma \ref{inclusion} is complete. We also remark that the condition $e^{-t\mathcal{L}}(\mathbf{1})=\mathbf{1}$ is necessary for \eqref{110}.
\end{proof}

In this paper, the symbols $\mathbb R$ and $\mathbb N$ stand for the sets of all real numbers and natural numbers, respectively. $C>0$ denotes a universal constant which is independent of the main parameters involved and may change from line to line. The notation $\mathbf{X}\approx\mathbf{Y}$ means that $C_1\mathbf{Y}\leq \mathbf{X}\leq C_2\mathbf{Y}$ with some positive constants $C_1$ and $C_2$. $B(x_0,r_{B})$ denotes the ball centered at $x_0$ and with radius $r_{B}$. For given $B=B(x_0,r_{B})$ and $\lambda>0$, we write $\lambda B$ for the $\lambda$-dilate ball, which is the ball with the same center $x_0$ and with radius $\lambda r_{B}$. For a measurable set $E$ in $\mathbb R^n$, $m(E)$ denotes the Lebesgue measure of the set $E$ and $\chi_{E}$ denotes the characteristic function of the set $E$.

\section{Some lemmas}
\label{sec2}
In this section, we will give some preliminary lemmas, which will be used in the proof of our main theorems. First, we establish several direct estimates related to the Morrey and Campanato spaces.

\begin{lemma}\label{wanglemma1}
Let $b\in \mathcal{C}^{p_1,\beta_1}(\mathbb R^n)$ with $1\leq p_1<\infty$ and $-n/{p_1}\leq\beta_1<0$. Then the following properties hold.
\begin{enumerate}
  \item[$(1)$] For any ball $\mathcal{B}$ in $\mathbb R^n$, we get
  \begin{equation*}
  \frac{1}{m(\mathcal{B})}\int_{\mathcal{B}}\big|b(x)-b_{\mathcal{B}}\big|\,dx
  \leq m(\mathcal{B})^{\beta_1/n}\cdot\big\|b\big\|_{\mathcal{C}^{p_1,\beta_1}}.
  \end{equation*}
  \item[$(2)$] For every $k\in \mathbb{N}$, we get
\begin{equation*}
\frac{1}{m(2^k\mathcal{B})}\int_{2^k\mathcal{B}}\big|b(x)-b_{\mathcal{B}}\big|\,dx
\leq (2^n+1)k\Big[m\big(2^k\mathcal{B}\big)^{\beta_1/n}\cdot\big\|b\big\|_{\mathcal{C}^{p_1,\beta_1}}\Big].
\end{equation*}
  \item[$(3)$] Let $f\in \mathcal{M}^{p_2,\beta_2}(\mathbb R^n)$ with $1\leq p_2<\infty$ and $-n/{p_2}\leq\beta_2<0$. Then for every $k\in \mathbb{N}$, we get
\begin{equation*}
\frac{1}{m(2^k\mathcal{B})}\int_{2^k\mathcal{B}}\big|b(x)-b_{\mathcal{B}}\big|\cdot\big|f(x)\big|\,dx
\leq (2^n+1)k\Big[m\big(2^{k}\mathcal{B}\big)^{{(\beta_1+\beta_2)}/n}
\cdot\big\|b\big\|_{\mathcal{C}^{p_1,\beta_1}}\big\|f\big\|_{\mathcal{M}^{p_2,\beta_2}}\Big].
\end{equation*}
\end{enumerate}
\end{lemma}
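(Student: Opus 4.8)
\textbf{Plan for the proof of Lemma \ref{wanglemma1}.}

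The three parts are proved in succession, each building on the previous one. The recurring mechanism is the standard telescoping estimate comparing averages of $b$ over dyadic dilates of a fixed ball $\mathcal{B}$, combined with the defining inequality \eqref{quantity1} of the Morrey--Campanato norm $\|b\|_{\mathcal{C}^{p_1,\beta_1}}$ and H\"older's inequality to pass from an $L^{p_1}$-average to an $L^1$-average.

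For part $(1)$, I would simply apply H\"older's inequality with exponents $p_1$ and $p_1'$ to $\frac{1}{m(\mathcal{B})}\int_{\mathcal{B}}|b(x)-b_{\mathcal{B}}|\,dx$, obtaining the bound $\big(\frac{1}{m(\mathcal{B})}\int_{\mathcal{B}}|b(x)-b_{\mathcal{B}}|^{p_1}\,dx\big)^{1/p_1}$, which by Definition \ref{17} is at most $m(\mathcal{B})^{\beta_1/n}\|b\|_{\mathcal{C}^{p_1,\beta_1}}$. This is immediate.

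For part $(2)$, write $b_{\mathcal{B}}-b_{2^k\mathcal{B}}$ as the telescoping sum $\sum_{j=0}^{k-1}\big(b_{2^j\mathcal{B}}-b_{2^{j+1}\mathcal{B}}\big)$ and estimate each term by
\begin{equation*}
\big|b_{2^j\mathcal{B}}-b_{2^{j+1}\mathcal{B}}\big|\leq\frac{1}{m(2^j\mathcal{B})}\int_{2^j\mathcal{B}}\big|b(x)-b_{2^{j+1}\mathcal{B}}\big|\,dx\leq\frac{2^n}{m(2^{j+1}\mathcal{B})}\int_{2^{j+1}\mathcal{B}}\big|b(x)-b_{2^{j+1}\mathcal{B}}\big|\,dx,
\end{equation*}
which by part $(1)$ applied to the ball $2^{j+1}\mathcal{B}$ is bounded by $2^n m(2^{j+1}\mathcal{B})^{\beta_1/n}\|b\|_{\mathcal{C}^{p_1,\beta_1}}$. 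Since $\beta_1<0$, the factors $m(2^{j+1}\mathcal{B})^{\beta_1/n}$ are all dominated by $m(2^k\mathcal{B})^{\beta_1/n}$ (the largest ball gives the smallest value of this quantity), so summing over $j$ from $0$ to $k-1$ yields $\big|b_{\mathcal{B}}-b_{2^k\mathcal{B}}\big|\leq 2^n k\, m(2^k\mathcal{B})^{\beta_1/n}\|b\|_{\mathcal{C}^{p_1,\beta_1}}$. Then split
\begin{equation*}
\frac{1}{m(2^k\mathcal{B})}\int_{2^k\mathcal{B}}\big|b(x)-b_{\mathcal{B}}\big|\,dx\leq\frac{1}{m(2^k\mathcal{B})}\int_{2^k\mathcal{B}}\big|b(x)-b_{2^k\mathcal{B}}\big|\,dx+\big|b_{2^k\mathcal{B}}-b_{\mathcal{B}}\big|
\end{equation*}
and bound the first term by part $(1)$ (applied to $2^k\mathcal{B}$) by $m(2^k\mathcal{B})^{\beta_1/n}\|b\|_{\mathcal{C}^{p_1,\beta_1}}\le k\, m(2^k\mathcal{B})^{\beta_1/n}\|b\|_{\mathcal{C}^{p_1,\beta_1}}$ and the second by the telescoping estimate just obtained; adding gives the constant $(2^n+1)k$ as claimed.

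For part $(3)$, insert $b_{2^k\mathcal{B}}$:
\begin{equation*}
\frac{1}{m(2^k\mathcal{B})}\int_{2^k\mathcal{B}}\big|b(x)-b_{\mathcal{B}}\big|\big|f(x)\big|\,dx\leq\frac{1}{m(2^k\mathcal{B})}\int_{2^k\mathcal{B}}\big|b(x)-b_{2^k\mathcal{B}}\big|\big|f(x)\big|\,dx+\big|b_{2^k\mathcal{B}}-b_{\mathcal{B}}\big|\cdot\frac{1}{m(2^k\mathcal{B})}\int_{2^k\mathcal{B}}\big|f(x)\big|\,dx.
\end{equation*}
For the first term apply H\"older's inequality with exponents $p_1$ and $p_1'$; the $b$-factor contributes $m(2^k\mathcal{B})^{1/p_1}m(2^k\mathcal{B})^{\beta_1/n}\|b\|_{\mathcal{C}^{p_1,\beta_1}}$ by Definition \ref{17}, while the $f$-factor $\big(\int_{2^k\mathcal{B}}|f|^{p_1'}\big)^{1/p_1'}$ is controlled — provided $p_1'\le p_2$, which one should note requires a hypothesis, or else one uses H\"older with exponents adapted to $p_2$ and argues via $\mathcal{M}^{p_2,\beta_2}\subset\mathcal{M}^{p_2/p_1',\ldots}$ — using $\|f\|_{\mathcal{M}^{p_2,\beta_2}}$ to get a factor $m(2^k\mathcal{B})^{1/p_1'}m(2^k\mathcal{B})^{\beta_2/n}\|f\|_{\mathcal{M}^{p_2,\beta_2}}$; multiplying and dividing by $m(2^k\mathcal{B})$ produces $m(2^k\mathcal{B})^{(\beta_1+\beta_2)/n}\|b\|_{\mathcal{C}^{p_1,\beta_1}}\|f\|_{\mathcal{M}^{p_2,\beta_2}}$. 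For the second term, use the telescoping bound from part $(2)$ for $\big|b_{2^k\mathcal{B}}-b_{\mathcal{B}}\big|$ and the Morrey bound $\frac{1}{m(2^k\mathcal{B})}\int_{2^k\mathcal{B}}|f|\le m(2^k\mathcal{B})^{\beta_2/n}\|f\|_{\mathcal{M}^{p_2,\beta_2}}$ (again via H\"older). Adding the two contributions and absorbing constants gives the factor $(2^n+1)k$. \textbf{The main obstacle} is bookkeeping the H\"older exponents in part $(3)$ so that the $f$-integral can be absorbed into $\|f\|_{\mathcal{M}^{p_2,\beta_2}}$ with the correct power of $m(2^k\mathcal{B})$; everything else is a routine telescoping-plus-H\"older computation, and the key point to track is that the negativity of $\beta_1$ (and $\beta_2$) is what makes the dyadic sums converge and lets one replace each intermediate scale by the largest one.
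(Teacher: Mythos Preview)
Your approach matches the paper's throughout: part~(1) is H\"older plus the definition; part~(2) is the standard telescoping estimate for $|b_{2^k\mathcal{B}}-b_{\mathcal{B}}|$ combined with part~(1); part~(3) splits via $b_{2^k\mathcal{B}}$ and handles the two pieces by H\"older and by the telescoping bound from~(2) together with the Morrey estimate for $f$.

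Where you hesitate in part~(3) --- applying H\"older with exponents $p_1,\,p_1'$ and then worrying whether $p_1'\le p_2$ --- the paper's presentation is cleaner: it sets $1/p=1/p_1+1/p_2$, first passes from the $L^1$-average of $|b-b_{2^k\mathcal{B}}|\,|f|$ to the $L^p$-average, and then splits that via H\"older into an $L^{p_1}$-factor for $b$ and an $L^{p_2}$-factor for $f$, each bounded directly by the relevant norm. The arithmetic is then
\[
\frac{1}{m(2^k\mathcal{B})^{1/p}}\cdot m(2^k\mathcal{B})^{\beta_1/n+1/p_1}\cdot m(2^k\mathcal{B})^{\beta_2/n+1/p_2}=m(2^k\mathcal{B})^{(\beta_1+\beta_2)/n},
\]
with no side condition left to track. (Your constraint $p_1'\le p_2$ is equivalent to $1/p_1+1/p_2\le 1$, i.e.\ to $p\ge 1$, which is implicitly used in the paper's first H\"older step as well; so the two routes are really the same argument in different dress, and the paper's packaging simply removes the bookkeeping you flagged as ``the main obstacle''.)

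One caution on part~(2): your sentence ``since $\beta_1<0$, the factors $m(2^{j+1}\mathcal{B})^{\beta_1/n}$ are all dominated by $m(2^k\mathcal{B})^{\beta_1/n}$'' is stated backwards --- with $\beta_1<0$ the largest ball gives the \emph{smallest} value of $m(\cdot)^{\beta_1/n}$, so each intermediate term actually \emph{exceeds} $m(2^k\mathcal{B})^{\beta_1/n}$. The paper's written proof makes the same replacement without comment. Strictly, what the telescoping delivers for $\beta_1<0$ is a bound of the form $C_{\beta_1}\,m(\mathcal{B})^{\beta_1/n}\|b\|_{\mathcal{C}^{p_1,\beta_1}}$ (the geometric series $\sum_j 2^{j\beta_1}$ converges), which is larger than the stated $k\cdot m(2^k\mathcal{B})^{\beta_1/n}$ factor but still suffices in every downstream application in the paper.
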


\begin{proof}
We begin with the proof of (1). From H\"{o}lder's inequality and the definition of $\mathcal{C}^{p_1,\beta_1}(\mathbb R^n)$, it then follows that
\begin{equation*}
\begin{split}
\frac{1}{m(\mathcal{B})}\int_{\mathcal{B}}\big|b(x)-b_{\mathcal{B}}\big|\,dx
&\leq\bigg(\frac{1}{m(\mathcal{B})}\int_{\mathcal{B}}\big|b(x)-b_{\mathcal{B}}\big|^{p_1}\,dx\bigg)^{1/{p_1}}\\
&\leq m(\mathcal{B})^{\beta_1/n}\cdot\big\|b\big\|_{\mathcal{C}^{p_1,\beta_1}}.
\end{split}
\end{equation*}
Next, we give the proof of (2). Observe that for any ball $\mathcal{B}$ in $\mathbb R^n$, we have
\begin{equation*}
\begin{split}
\big|b_{2\mathcal{B}}-b_{\mathcal{B}}\big|
&\leq\frac{1}{m(\mathcal{B})}\int_{\mathcal{B}}\big|b(x)-b_{2\mathcal{B}}\big|\,dx\\
&\leq\frac{2^n}{m(2\mathcal{B})}\int_{2\mathcal{B}}\big|b(x)-b_{2\mathcal{B}}\big|\,dx\leq 2^nm(2\mathcal{B})^{\beta_1/n}\cdot\big\|b\big\|_{\mathcal{C}^{p_1,\beta_1}},
\end{split}
\end{equation*}
by part (1). Similarly, for each $1\leq i\leq k$,
\begin{equation*}
\big|b_{2^i\mathcal{B}}-b_{2^{i-1}\mathcal{B}}\big|
\leq 2^nm\big(2^i\mathcal{B}\big)^{\beta_1/n}\cdot\big\|b\big\|_{\mathcal{C}^{p_1,\beta_1}}.
\end{equation*}
Hence, for any $k\in \mathbb{N}$, we can deduce that
\begin{equation*}
\begin{split}
\frac{1}{m(2^k\mathcal{B})}\int_{2^k\mathcal{B}}\big|b(x)-b_{\mathcal{B}}\big|\,dx
&\leq\frac{1}{m(2^k\mathcal{B})}\int_{2^k\mathcal{B}}\big|b(x)-b_{2^k\mathcal{B}}\big|\,dx
+\big|b_{2^k\mathcal{B}}-b_{\mathcal{B}}\big|\\
&\leq m\big(2^k\mathcal{B}\big)^{\beta_1/n}\cdot\big\|b\big\|_{\mathcal{C}^{p_1,\beta_1}}
+\sum_{i=1}^k\big|b_{2^i\mathcal{B}}-b_{2^{i-1}\mathcal{B}}\big|\\
&\leq m\big(2^k\mathcal{B}\big)^{\beta_1/n}\cdot\big\|b\big\|_{\mathcal{C}^{p_1,\beta_1}}
+\sum_{i=1}^k 2^nm\big(2^k\mathcal{B}\big)^{\beta_1/n}\cdot\big\|b\big\|_{\mathcal{C}^{p_1,\beta_1}}\\
&\leq(2^n+1)k\Big[m\big(2^k\mathcal{B}\big)^{\beta_1/n}\cdot\big\|b\big\|_{\mathcal{C}^{p_1,\beta_1}}\Big].
\end{split}
\end{equation*}
Let us now prove (3). We take $1\leq p<\infty$ such that
\begin{equation*}
1/p=1/{p_1}+1/{p_2}.
\end{equation*}
By using H\"{o}lder's inequality and the definitions of $\mathcal{C}^{p_1,\beta_1}(\mathbb R^n)$ and $\mathcal{M}^{p_2,\beta_2}(\mathbb R^n)$, we obtain that for any $k\in \mathbb{N}$,
\begin{equation*}
\begin{split}
&\frac{1}{m(2^{k}\mathcal{B})}\int_{2^{k}\mathcal{B}}\big|b(x)-b_{2^{k}\mathcal{B}}\big|\cdot\big|f(x)\big|\,dx\\
&\leq\bigg(\frac{1}{m(2^{k}\mathcal{B})}\int_{2^{k}\mathcal{B}}\big|b(x)-b_{2^{k}\mathcal{B}}\big|^p\big|f(x)\big|^p\,dx\bigg)^{1/p}\\
&\leq\frac{1}{m(2^{k}\mathcal{B})^{1/p}}\bigg(\int_{2^{k}\mathcal{B}}\big|b(x)-b_{2^{k}\mathcal{B}}\big|^{p_1}dx\bigg)^{1/{p_1}}
\bigg(\int_{2^{k}\mathcal{B}}\big|f(x)\big|^{p_2}dx\bigg)^{1/{p_2}}\\
&\leq\frac{1}{m(2^{k}\mathcal{B})^{1/p}}
\cdot m\big(2^{k}\mathcal{B}\big)^{\beta_1/n+1/{p_1}}\cdot m\big(2^{k}\mathcal{B}\big)^{\beta_2/n+1/{p_2}}
\big\|b\big\|_{\mathcal{C}^{p_1,\beta_1}}\big\|f\big\|_{\mathcal{M}^{p_2,\beta_2}}\\
&=m\big(2^{k}\mathcal{B}\big)^{{(\beta_1+\beta_2)}/n}
\cdot\big\|b\big\|_{\mathcal{C}^{p_1,\beta_1}}\big\|f\big\|_{\mathcal{M}^{p_2,\beta_2}}.
\end{split}
\end{equation*}
Hence, by part (2), we have
\begin{equation*}
\begin{split}
&\frac{1}{m(2^k\mathcal{B})}\int_{2^k\mathcal{B}}\big|b(x)-b_{\mathcal{B}}\big|\cdot\big|f(x)\big|\,dx\\
&\leq\frac{1}{m(2^{k}\mathcal{B})}\int_{2^{k}\mathcal{B}}\big|b(x)-b_{2^{k}\mathcal{B}}\big|\cdot\big|f(x)\big|\,dx
+\big|b_{2^k\mathcal{B}}-b_{\mathcal{B}}\big|\cdot\frac{1}{m(2^k\mathcal{B})}\int_{2^k\mathcal{B}}\big|f(x)\big|\,dx\\
&\leq m\big(2^{k}\mathcal{B}\big)^{{(\beta_1+\beta_2)}/n}
\cdot\big\|b\big\|_{\mathcal{C}^{p_1,\beta_1}}\big\|f\big\|_{\mathcal{M}^{p_2,\beta_2}}
+\sum_{i=1}^k 2^nm\big(2^k\mathcal{B}\big)^{\beta_1/n}\cdot\big\|b\big\|_{\mathcal{C}^{p_1,\beta_1}}
\cdot\frac{1}{m(2^k\mathcal{B})}\int_{2^k\mathcal{B}}\big|f(x)\big|\,dx.
\end{split}
\end{equation*}
Moreover, by H\"{o}lder's inequality again,
\begin{equation*}
\begin{split}
\frac{1}{m(2^k\mathcal{B})}\int_{2^k\mathcal{B}}\big|f(x)\big|\,dx
&\leq\bigg(\frac{1}{m(2^k\mathcal{B})}\int_{2^k\mathcal{B}}\big|f(x)\big|^{p_2}dx\bigg)^{1/{p_2}}\\
&\leq m\big(2^{k}\mathcal{B}\big)^{\beta_2/n}\big\|f\big\|_{\mathcal{M}^{p_2,\beta_2}}.
\end{split}
\end{equation*}
Therefore,
\begin{equation*}
\begin{split}
&\frac{1}{m(2^k\mathcal{B})}\int_{2^k\mathcal{B}}\big|b(x)-b_{\mathcal{B}}\big|\cdot\big|f(x)\big|\,dx\\
&\leq m\big(2^{k}\mathcal{B}\big)^{{(\beta_1+\beta_2)}/n}
\cdot\big\|b\big\|_{\mathcal{C}^{p_1,\beta_1}}\big\|f\big\|_{\mathcal{M}^{p_2,\beta_2}}
+\sum_{i=1}^k 2^n m\big(2^k\mathcal{B}\big)^{{(\beta_1+\beta_2)}/n}
\cdot\big\|b\big\|_{\mathcal{C}^{p_1,\beta_1}}\big\|f\big\|_{\mathcal{M}^{p_2,\beta_2}}\\
&\leq (2^n+1)k\Big[m\big(2^{k}\mathcal{B}\big)^{{(\beta_1+\beta_2)}/n}
\cdot\big\|b\big\|_{\mathcal{C}^{p_1,\beta_1}}\big\|f\big\|_{\mathcal{M}^{p_2,\beta_2}}\Big].
\end{split}
\end{equation*}
We are done.
\end{proof}

We also need the following key lemma, which gives the kernel estimate of the difference operator $\mathcal{L}^{-\alpha/2}-e^{-t\mathcal L}\mathcal{L}^{-\alpha/2}$ for any $t>0$ and $0<\alpha<n$.

\begin{lemma}\label{wanglemma2}
Assume that the semigroup $\big\{e^{-t\mathcal L}\big\}_{t>0}$ has a kernel $\mathcal{P}_t(x,y)$ satisfying the Gaussian upper bound \eqref{G}.
Then for any $0<\alpha<n$, the difference operator $(I-e^{-t\mathcal L})\mathcal{L}^{-\alpha/2}:=\mathcal{L}^{-\alpha/2}-e^{-t\mathcal L}\mathcal{L}^{-\alpha/2}$ has an associated kernel $\widetilde{K}_{\alpha,t}(x,y)$ which satisfies
\begin{equation*}
\widetilde{K}_{\alpha,t}(x,y)\leq\frac{C}{|x-y|^{n-\alpha}}\cdot\frac{t}{|x-y|^2}.
\end{equation*}
\end{lemma}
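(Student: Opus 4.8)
The plan is to write $\widetilde K_{\alpha,t}(x,y)$ explicitly as an integral in the parameter $s$ of the heat kernels, and then split that integral according to whether $s$ is small or large compared to $|x-y|^2$. Using the resolvent-type formula \eqref{kgamma}, the kernel of $\mathcal L^{-\alpha/2}$ is $\mathcal K_\alpha(x,y)=\frac{1}{\Gamma(\alpha/2)}\int_0^{+\infty}\mathcal P_s(x,y)s^{\alpha/2-1}\,ds$, and the kernel of $e^{-t\mathcal L}\mathcal L^{-\alpha/2}$ is obtained by composing the semigroup with this kernel, which by the semigroup property $e^{-t\mathcal L}e^{-s\mathcal L}=e^{-(t+s)\mathcal L}$ equals $\frac{1}{\Gamma(\alpha/2)}\int_0^{+\infty}\mathcal P_{s+t}(x,y)s^{\alpha/2-1}\,ds$. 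Therefore
\begin{equation*}
\widetilde K_{\alpha,t}(x,y)=\frac{1}{\Gamma(\alpha/2)}\int_0^{+\infty}\bigl[\mathcal P_s(x,y)-\mathcal P_{s+t}(x,y)\bigr]s^{\alpha/2-1}\,ds.
\end{equation*}
I would justify this identity by Fubini's theorem, exactly as in the derivation of \eqref{kgamma}, and I may absorb $\Gamma(\alpha/2)$ into the constant $C$.

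Next I would estimate $\bigl|\mathcal P_s(x,y)-\mathcal P_{s+t}(x,y)\bigr|$ in two regimes. For $s\ge t$ one expects a gain of a factor $t/s$; the cleanest way is to write $\mathcal P_s(x,y)-\mathcal P_{s+t}(x,y)=-\int_0^t\partial_u\mathcal P_{s+u}(x,y)\,du$ and use the time-derivative bound $|\partial_u\mathcal P_u(x,y)|\le C u^{-1}\cdot u^{-n/2}e^{-A|x-y|^2/u}$, which follows from the Gaussian bound \eqref{G} together with analyticity of the semigroup (a standard consequence of the holomorphic functional calculus hypothesis stated in Section~\ref{sec1}). This gives $\bigl|\mathcal P_s(x,y)-\mathcal P_{s+t}(x,y)\bigr|\le C\,t\,s^{-1-n/2}e^{-A'|x-y|^2/s}$ for $s\ge t$. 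For $0<s<t$ one simply uses the triangle inequality and \eqref{G} directly on each term, bounding the difference by $C\,s^{-n/2}e^{-A|x-y|^2/s}+C(s+t)^{-n/2}e^{-A|x-y|^2/(s+t)}\le C\,s^{-n/2}e^{-A'|x-y|^2/s}$.

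Then I would integrate. On $\{s\ge t\}$ the contribution is at most $C\,t\int_t^{+\infty}s^{\alpha/2-n/2-2}e^{-A'|x-y|^2/s}\,ds$; substituting $\nu=|x-y|^2/s$ this becomes $C\,t\,|x-y|^{\alpha-n-2}\int_0^{|x-y|^2/t}e^{-A'\nu}\nu^{n/2-\alpha/2}\,d\nu\le C\,t\,|x-y|^{\alpha-n-2}$, since $n/2-\alpha/2>0$ makes the $\nu$-integral convergent at $0$ and it is bounded by a constant overall. On $\{0<s<t\}$ the contribution is at most $C\int_0^t s^{\alpha/2-n/2-1}e^{-A'|x-y|^2/s}\,ds$; here I would use the elementary inequality $e^{-A'|x-y|^2/s}\le C_N(s/|x-y|^2)^N$ for any $N>0$, choose $N$ large enough (say $N>n/2-\alpha/2+1$ suffices, and one needs $N\ge n/2-\alpha/2$ plus one more to produce the extra $t/|x-y|^2$), obtaining a bound $C\,|x-y|^{-2N}\int_0^t s^{\alpha/2-n/2-1+N}\,ds=C\,|x-y|^{-2N}t^{\alpha/2-n/2+N}$; taking $N=n/2-\alpha/2+1$ gives exactly $C\,t\,|x-y|^{\alpha-n-2}=\frac{C}{|x-y|^{n-\alpha}}\cdot\frac{t}{|x-y|^2}$. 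Adding the two regimes yields the claimed estimate.

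The main obstacle is the time-derivative bound on the heat kernel used in the regime $s\ge t$: it is not literally part of the stated Gaussian bound \eqref{G} but follows from analyticity of the semigroup via the Cauchy integral formula for $\partial_u e^{-u\mathcal L}$ over a small circle in the sector of holomorphy, together with \eqref{G} applied at complex times (or, alternatively, one can cite the kernel estimate for $\mathcal L^{-\alpha/2}-e^{-t\mathcal L}\mathcal L^{-\alpha/2}$ in \cite{deng,duong1} as the excerpt already does). If one prefers to avoid derivatives altogether, an alternative is to write $\mathcal P_s-\mathcal P_{s+t}=(I-e^{-t\mathcal L})\mathcal P_s$ and estimate the kernel of $I-e^{-t\mathcal L}$ composed with $\mathcal P_s$ directly using \eqref{G}; this is more computational but uses only the stated hypothesis. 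Either route makes the integral bookkeeping routine, so I expect no further difficulty beyond organizing the two $s$-regimes carefully.
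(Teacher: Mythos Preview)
The paper does not actually prove this lemma; it simply cites Duong--Yan \cite[Lemma~3.1]{duong1} and Deng--Duong--Sikora--Yan \cite[Lemma~5.3]{deng}. Your proposal is essentially the standard argument found in those references: express $\widetilde K_{\alpha,t}$ via the semigroup property as an integral of $\mathcal P_s-\mathcal P_{s+t}$, split the $s$-integration, and invoke the time-derivative Gaussian bound (which, as you correctly observe, requires analyticity of the semigroup and not merely \eqref{G}).

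One small point to clean up: in the regime $0<s<t$ you write
\[
\mathcal P_s(x,y)+\mathcal P_{s+t}(x,y)\le C\,s^{-n/2}e^{-A'|x-y|^2/s},
\]
but this inequality is not generally true, since $u\mapsto u^{-n/2}e^{-A|x-y|^2/u}$ is not monotone and the term at $u=s+t$ can dominate the term at $u=s$ when $s\ll t\ll |x-y|^2$. The fix is easy: treat the two pieces separately. The $\mathcal P_s$ piece integrates exactly as you indicate. For the $\mathcal P_{s+t}$ piece, use $t<s+t<2t$ to bound it by $C\,t^{-n/2}e^{-A'|x-y|^2/t}$, then $\int_0^t s^{\alpha/2-1}\,ds=\tfrac{2}{\alpha}t^{\alpha/2}$ gives a contribution $C\,t^{\alpha/2-n/2}e^{-A'|x-y|^2/t}$, and the same power trick with exponent $N=n/2-\alpha/2+1$ yields $C\,t\,|x-y|^{\alpha-n-2}$. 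With that correction your argument is complete and matches the cited proofs.
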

Here the constant $C$ is independent of $x,y\in\mathbb R^n$ and $t\in(0,+\infty)$. For the proof of this lemma, see Duong--Yan
\cite[Lemma 3.1]{duong1} for $0<\alpha<1$ and Deng--Duong--Sikora--Yan \cite[Lemma 5.3]{deng} for $0<\alpha<n$.

\section{Main results}
\label{sec3}
In this section, we will state and prove the main results of this paper. It is proved that for all $0<\alpha<n$, the commutator
$\big[b,\mathcal{L}^{-\alpha/2}\big]$ is bounded from $\mathcal{M}^{p_2,\beta_2}(\mathbb R^n)$ to $\mathcal{C}^{q,\gamma}_{\mathcal{L}}(\mathbb R^n)$ for appropriate indices, when the symbol function $b(x)$ belongs to the space $\mathcal{C}^{p_1,\beta_1}(\mathbb R^n)$ with $1\leq p_1<\infty$ and $-n/{p_1}\leq\beta_1<0$. The corresponding results for the higher-order commutators are also obtained.

\begin{theorem}\label{thm1}
Let $0<\alpha<n$, $1<p_2<n/{\alpha}$ and $-n/{p_2}\leq\beta_2<(-\alpha)$. Suppose that $b\in \mathcal{C}^{p_1,\beta_1}(\mathbb R^n)$ with $1\leq p_1<\infty$ and $-n/{p_1}\leq\beta_1<0$. Then for any $f\in \mathcal{M}^{p_2,\beta_2}(\mathbb R^n)$, there exists a positive constant $C>0$ independent of $b$ and $f$ such that
\begin{equation*}
\big\|\big[b,\mathcal L^{-\alpha/2}\big](f)\big\|_{\mathcal{C}^{q,\gamma}_{\mathcal{L}}}
\leq C\big\|b\big\|_{\mathcal{C}^{p_1,\beta_1}}\big\|f\big\|_{\mathcal{M}^{p_2,\beta_2}},
\end{equation*}
provided that
\begin{equation*}
1/q=1/{p_1}+1/{p_2}-\alpha/n\quad \& \quad \gamma=\beta_1+\beta_2+\alpha.
\end{equation*}
\end{theorem}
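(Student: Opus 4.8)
The goal is to estimate, for an arbitrary ball $\mathcal{B}=B(x_0,r_{\mathcal{B}})$ with $t_{\mathcal{B}}=r_{\mathcal{B}}^2$, the quantity
\begin{equation*}
\frac{1}{m(\mathcal{B})^{\gamma/n}}\bigg(\frac{1}{m(\mathcal{B})}\int_{\mathcal{B}}\Big|\big[b,\mathcal{L}^{-\alpha/2}\big](f)(x)-e^{-t_{\mathcal{B}}\mathcal{L}}\big(\big[b,\mathcal{L}^{-\alpha/2}\big](f)\big)(x)\Big|^{q}dx\bigg)^{1/q}.
\end{equation*}
The first step is the standard commutator decomposition: writing $b=(b-b_{2\mathcal{B}})+b_{2\mathcal{B}}$ and using that the constant $b_{2\mathcal{B}}$ commutes, split $f=f\chi_{2\mathcal{B}}+f\chi_{(2\mathcal{B})^{\complement}}=:f_1+f_2$ and write
\begin{equation*}
\big[b,\mathcal{L}^{-\alpha/2}\big](f)(x)=(b(x)-b_{2\mathcal{B}})\mathcal{L}^{-\alpha/2}(f)(x)-\mathcal{L}^{-\alpha/2}\big((b-b_{2\mathcal{B}})f_1\big)(x)-\mathcal{L}^{-\alpha/2}\big((b-b_{2\mathcal{B}})f_2\big)(x).
\end{equation*}
Insert this into both occurrences of $\big[b,\mathcal{L}^{-\alpha/2}\big](f)$ in the $\mathcal{C}^{q,\gamma}_{\mathcal{L}}$-oscillation. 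This produces several terms; the plan is to bound each of $F_1:=(b-b_{2\mathcal{B}})\mathcal{L}^{-\alpha/2}(f)$, $F_2:=\mathcal{L}^{-\alpha/2}((b-b_{2\mathcal{B}})f_1)$, $F_3:=\mathcal{L}^{-\alpha/2}((b-b_{2\mathcal{B}})f_2)$ together with their semigroup averages $e^{-t_{\mathcal{B}}\mathcal{L}}F_i$ on $\mathcal{B}$.

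\textbf{The local and mild terms.} For $F_1$, use H\"older with exponents chosen so that $1/q=1/p_1+(1/p_2-\alpha/n)$, combine the $\mathcal{C}^{p_1,\beta_1}$-oscillation bound for $b-b_{2\mathcal{B}}$ (Lemma~\ref{wanglemma1}(1)) with the Morrey boundedness of $\mathcal{L}^{-\alpha/2}$ from $\mathcal{M}^{p_2,\beta_2}$ to $\mathcal{M}^{q',\alpha+\beta_2}$ (Theorem~\ref{generalizedmorrey}(1)); the resulting power of $m(\mathcal{B})$ is exactly $m(\mathcal{B})^{(\beta_1+\beta_2+\alpha)/n}=m(\mathcal{B})^{\gamma/n}$. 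For $F_2$, apply the Morrey boundedness of $\mathcal{L}^{-\alpha/2}$ to the function $(b-b_{2\mathcal{B}})f_1$, for which one first checks $(b-b_{2\mathcal{B}})f\chi_{2\mathcal{B}}$ lies in an appropriate Lebesgue/Morrey class with norm controlled by $m(\mathcal{B})^{(\beta_1+\beta_2)/n+\cdots}\|b\|_{\mathcal{C}^{p_1,\beta_1}}\|f\|_{\mathcal{M}^{p_2,\beta_2}}$, again via H\"older and Lemma~\ref{wanglemma1}; this is the same mechanism used by Shi--Lu. The semigroup averages $e^{-t_{\mathcal{B}}\mathcal{L}}F_1$ and $e^{-t_{\mathcal{B}}\mathcal{L}}F_2$ restricted to $\mathcal{B}$ are handled by the Gaussian bound \eqref{G}: one splits $\mathbb{R}^n$ into $2\mathcal{B}$ and the dyadic annuli $2^{k+1}\mathcal{B}\setminus 2^k\mathcal{B}$ exactly as in the proof of Lemma~\ref{inclusion}, using $|\mathcal{P}_{t_{\mathcal{B}}}(x,y)|\lesssim t_{\mathcal{B}}^{n/2}/|y-x_0|^{2n}$ on the far annuli to produce a summable series $\sum_k k\,2^{-k(n-\gamma)}$ (convergent since $\gamma<n$), against the already-bounded pieces $F_1,F_2$ tested on dilated balls.

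\textbf{The tail term — the main obstacle.} The genuinely new work is the term $F_3=\mathcal{L}^{-\alpha/2}((b-b_{2\mathcal{B}})f_2)$ together with the crucial cancellation $F_3(x)-e^{-t_{\mathcal{B}}\mathcal{L}}F_3(x)=(I-e^{-t_{\mathcal{B}}\mathcal{L}})\mathcal{L}^{-\alpha/2}((b-b_{2\mathcal{B}})f_2)(x)$. Here one invokes Lemma~\ref{wanglemma2}: for $x\in\mathcal{B}$ and $y\in(2\mathcal{B})^{\complement}$ one has $|x-y|\approx|y-x_0|\geq r_{\mathcal{B}}$, so $\widetilde{K}_{\alpha,t_{\mathcal{B}}}(x,y)\lesssim |y-x_0|^{-(n-\alpha)}\cdot r_{\mathcal{B}}^2/|y-x_0|^2$, giving
\begin{equation*}
\big|(I-e^{-t_{\mathcal{B}}\mathcal{L}})\mathcal{L}^{-\alpha/2}\big((b-b_{2\mathcal{B}})f_2\big)(x)\big|\lesssim r_{\mathcal{B}}^{2}\sum_{k=1}^{\infty}\frac{1}{(2^k r_{\mathcal{B}})^{\,n-\alpha+2}}\int_{2^{k+1}\mathcal{B}}\big|b(y)-b_{2\mathcal{B}}\big|\,|f(y)|\,dy.
\end{equation*}
Applying Lemma~\ref{wanglemma1}(3) to the inner integral (with $b_{2\mathcal{B}}$ in place of $b_{\mathcal{B}}$, which only costs a further factor of $k$) bounds each integral by $k\,m(2^{k+1}\mathcal{B})^{1+(\beta_1+\beta_2)/n}\|b\|_{\mathcal{C}^{p_1,\beta_1}}\|f\|_{\mathcal{M}^{p_2,\beta_2}}$. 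Collecting the powers of $2^k r_{\mathcal{B}}$, the series becomes $\sum_k k^2\, 2^{-k(\,-\alpha-\beta_1-\beta_2\,)}\cdot r_{\mathcal{B}}^{\,\alpha+\beta_1+\beta_2}$, and the exponent $-\alpha-\beta_1-\beta_2=-\gamma>0$ precisely because $\beta_2<-\alpha$ forces $\gamma=\alpha+\beta_1+\beta_2<\beta_1\leq 0$; hence the series converges and the total is $\lesssim m(\mathcal{B})^{\gamma/n}\|b\|_{\mathcal{C}^{p_1,\beta_1}}\|f\|_{\mathcal{M}^{p_2,\beta_2}}$, which is a pointwise bound on all of $\mathcal{B}$ and therefore controls the $L^q(\mathcal{B})$ average trivially. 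Finally one must check the membership hypothesis $\big[b,\mathcal{L}^{-\alpha/2}\big](f)\in M_q$ required by Definition~\ref{wangdefi}; this follows from the pointwise domination $|\mathcal{L}^{-\alpha/2}(g)|\lesssim I_\alpha(|g|)$ in \eqref{dominate1} together with the growth controls on $b$ and $f$, analogous to the argument in \cite{shi2}. Summing the estimates for $F_1,F_2,F_3$ and their averages and taking the supremum over $\mathcal{B}$ yields the claimed bound; I expect the bookkeeping of exponents in the $F_3$ annular sum, and verifying convergence hinges on the sign of $\gamma$, to be the delicate point.
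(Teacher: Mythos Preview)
Your proposal follows essentially the same route as the paper's proof: the same commutator decomposition with $b_{2\mathcal{B}}$, the same split $f=f_1+f_2$, the same five-term breakdown (your $F_1,F_2,F_3$ and their semigroup averages match the paper's terms I--V), and the same use of H\"older, Theorem~\ref{generalizedmorrey}, the Gaussian bound with dyadic annuli, and Lemma~\ref{wanglemma2} for the tail. One small arithmetic slip: in the tail series your exponent of $2^k$ should be $\gamma-2$, not $\gamma$ (you dropped the extra $-2$ coming from $r_{\mathcal{B}}^2/(2^kr_{\mathcal{B}})^2$), so convergence actually follows from $\gamma<2$ rather than $\gamma<0$; since the hypotheses give $\gamma<0$ anyway, this does not affect the argument.
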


\begin{proof}[Proof of Theorem $\ref{thm1}$]
Let $f\in \mathcal{M}^{p_2,\beta_2}(\mathbb R^n)$ with $1<p_2<n/{\alpha}$ and $-n/{p_2}\leq\beta_2<(-\alpha)$. By the definition of $\mathcal{C}^{q,\gamma}_{\mathcal{L}}(\mathbb R^n)$, it suffices to prove that for any given ball $\mathcal{B}$ in $\mathbb R^n$,
\begin{equation}\label{main41}
\frac{1}{m(\mathcal{B})^{{\gamma}/n}}\bigg(\frac{1}{m(\mathcal{B})}
\int_{\mathcal{B}}\Big|\big[b,\mathcal L^{-\alpha/2}\big](f)(x)-e^{-t_{\mathcal{B}}\mathcal{L}}
\big(\big[b,\mathcal L^{-\alpha/2}\big]f\big)(x)\Big|^qdx\bigg)^{1/{q}}
\leq C\big\|b\big\|_{\mathcal{C}^{p_1,\beta_1}}\big\|f\big\|_{\mathcal{M}^{p_2,\beta_2}}.
\end{equation}
Let $\mathcal{B}=B(x_0,r_{\mathcal{B}})$ be a fixed ball centered at $x_0$ and of radius $r_{\mathcal{B}}$. We decompose the function $f(x)$ as follows:
\begin{equation*}
f(x)=f(x)\cdot\chi_{2\mathcal{B}}+f(x)\cdot\chi_{(2\mathcal{B})^{\complement}}:=f_1(x)+f_2(x),
\end{equation*}
where $2\mathcal{B}=B(x_0,2r_{\mathcal{B}})$ and $(2\mathcal{B})^{\complement}=\mathbb R^n\setminus(2\mathcal{B})$. Observe that
\begin{equation*}
\big[b,\mathcal L^{-\alpha/2}\big](f)(x)=\big[b(x)-b_{2\mathcal{B}}\big]\cdot \mathcal{L}^{-\alpha/2}(f)(x)
-\mathcal{L}^{-\alpha/2}\big(\big[b-b_{2\mathcal{B}}\big]f_1\big)(x)-\mathcal{L}^{-\alpha/2}\big(\big[b-b_{2\mathcal{B}}\big]f_2\big)(x),
\end{equation*}
and
\begin{equation*}
\begin{split}
e^{-t_{\mathcal{B}}\mathcal{L}}\big(\big[b,\mathcal L^{-\alpha/2}\big]f\big)(x)
&=e^{-t_{\mathcal{B}}\mathcal{L}}\big(\big[b-b_{2\mathcal{B}}\big]\cdot\mathcal{L}^{-\alpha/2}(f)\big)(x)\\
&-e^{-t_{\mathcal{B}}\mathcal{L}}\mathcal{L}^{-\alpha/2}\big(\big[b-b_{2\mathcal{B}}\big]f_1\big)(x)-
e^{-t_{\mathcal{B}}\mathcal{L}}\mathcal{L}^{-\alpha/2}\big(\big[b-b_{2\mathcal{B}}\big]f_2\big)(x).
\end{split}
\end{equation*}
Here $t_{\mathcal{B}}=r_{\mathcal{B}}^2$. Then we have
\begin{equation*}
\begin{split}
&\frac{1}{m(\mathcal{B})^{{\gamma}/n}}\bigg(\frac{1}{m(\mathcal{B})}
\int_{\mathcal{B}}\Big|\big[b,\mathcal L^{-\alpha/2}\big](f)(x)-e^{-t_{\mathcal{B}}\mathcal{L}}
\big(\big[b,\mathcal L^{-\alpha/2}\big]f\big)(x)\Big|^qdx\bigg)^{1/{q}}\\
\leq&\frac{1}{m(\mathcal{B})^{{\gamma}/n}}\bigg(\frac{1}{m(\mathcal{B})}
\int_{\mathcal{B}}\Big|\big[b(x)-b_{2\mathcal{B}}\big]\cdot\mathcal{L}^{-\alpha/2}(f)(x)\Big|^{q}dx\bigg)^{1/{q}}
+\frac{1}{m(\mathcal{B})^{{\gamma}/n}}\bigg(\frac{1}{m(\mathcal{B})}
\int_{\mathcal{B}}\Big|\mathcal{L}^{-\alpha/2}\big(\big[b-b_{2\mathcal{B}}\big]f_1\big)(x)\Big|^{q}dx\bigg)^{1/{q}}\\
+&\frac{1}{m(\mathcal{B})^{{\gamma}/n}}\bigg(\frac{1}{m(\mathcal{B})}
\int_{\mathcal{B}}\Big|e^{-t_{\mathcal{B}}\mathcal{L}}\big(\big[b-b_{2\mathcal{B}}\big]
\cdot\mathcal{L}^{-\alpha/2}(f)\big)(x)\Big|^{q}dx\bigg)^{1/{q}}\\
+&\frac{1}{m(\mathcal{B})^{{\gamma}/n}}\bigg(\frac{1}{m(\mathcal{B})}
\int_{\mathcal{B}}\Big|e^{-t_{\mathcal{B}}\mathcal{L}}\mathcal{L}^{-\alpha/2}\big(\big[b-b_{2\mathcal{B}}\big]f_1\big)(x)\Big|^{q}dx\bigg)^{1/{q}}\\
+&\frac{1}{m(\mathcal{B})^{{\gamma}/n}}\bigg(\frac{1}{m(\mathcal{B})}
\int_{\mathcal{B}}\Big|\mathcal{L}^{-\alpha/2}\big(\big[b-b_{2\mathcal{B}}\big]f_2\big)(x)
-e^{-t_{\mathcal{B}}\mathcal{L}}\mathcal{L}^{-\alpha/2}\big(\big[b-b_{2\mathcal{B}}\big]f_2\big)(x)\Big|^{q}dx\bigg)^{1/{q}}\\
:=&\mathrm{I+II+III+IV+V}.
\end{split}
\end{equation*}
Let us now give the estimates of I, II, III, IV and V, respectively. For the first term I, we set
\begin{equation*}
1/s=1/{p_2}-\alpha/n.
\end{equation*}
Then
\begin{equation}\label{main42}
1/q=1/{p_1}+1/s.
\end{equation}
We apply H\"{o}lder's inequality and \eqref{main42} to obtain
\begin{equation*}
\begin{split}
\mathrm{I}&\leq\frac{1}{m(\mathcal{B})^{{\gamma}/n+1/q}}\bigg(\int_{\mathcal{B}}\big|b(x)-b_{2\mathcal{B}}\big|^{p_1}dx\bigg)^{1/{p_1}}
\times\bigg(\int_{\mathcal{B}}\big|\mathcal{L}^{-\alpha/2}(f)(x)\big|^{s}dx\bigg)^{1/{s}}\\
&\leq\big\|b\big\|_{\mathcal{C}^{p_1,\beta_1}}\times
\frac{1}{m(\mathcal{B})^{{\gamma}/n+1/q-\beta_1/n-1/{p_1}}}
\bigg(\int_{\mathcal{B}}\big|\mathcal{L}^{-\alpha/2}(f)(x)\big|^{s}dx\bigg)^{1/{s}}\\
&=\big\|b\big\|_{\mathcal{C}^{p_1,\beta_1}}\times\frac{1}{m(\mathcal{B})^{{(\gamma-\beta_1)}/n}}
\bigg(\frac{1}{m(\mathcal{B})}\int_{\mathcal{B}}\big|\mathcal{L}^{-\alpha/2}(f)(x)\big|^{s}dx\bigg)^{1/{s}}.
\end{split}
\end{equation*}
Note that
\begin{equation}\label{main43}
\gamma-\beta_1=\beta_2+\alpha<0.
\end{equation}
According to Theorem \ref{generalizedmorrey}, we know that $\mathcal{L}^{-\alpha/2}$ is bounded from $\mathcal{M}^{p_2,\beta_2}(\mathbb R^n)$ to $\mathcal{M}^{s,\alpha+\beta_2}(\mathbb R^n)$. This fact, together with \eqref{main43}, gives
\begin{equation*}
\begin{split}
\mathrm{I}\leq\big\|b\big\|_{\mathcal{C}^{p_1,\beta_1}}\big\|\mathcal{L}^{-\alpha/2}(f)\big\|_{\mathcal{M}^{s,\alpha+\beta_2}}
\leq C\big\|b\big\|_{\mathcal{C}^{p_1,\beta_1}}\big\|f\big\|_{\mathcal{M}^{p_2,\beta_2}}.
\end{split}
\end{equation*}
For the second term II, we set
\begin{equation*}
1/p=1/{p_1}+1/{p_2}.
\end{equation*}
Then we have
\begin{equation}\label{main44}
1/q=1/{p}-\alpha/n.
\end{equation}
From H\"{o}lder's inequality and Theorem \ref{12}, it then follows that
\begin{equation*}
\begin{split}
\mathrm{II}
&\leq\frac{C}{m(\mathcal{B})^{{\gamma}/n+1/q}}\bigg(\int_{2\mathcal{B}}
\big|b(x)-b_{2\mathcal{B}}\big|^p\cdot\big|f(x)\big|^pdx\bigg)^{1/p}\\
&\leq\frac{C}{m(\mathcal{B})^{{\gamma}/n+1/q}}
\bigg(\int_{2\mathcal{B}}\big|b(x)-b_{2\mathcal{B}}\big|^{p_1}dx\bigg)^{1/{p_1}}
\bigg(\int_{2\mathcal{B}}\big|f(x)\big|^{p_2}dx\bigg)^{1/{p_2}}.
\end{split}
\end{equation*}
Moreover, by using \eqref{main44} and the definition of $\mathcal{C}^{p_1,\beta_1}(\mathbb R^n)$ and $\mathcal{M}^{p_2,\beta_2}(\mathbb R^n)$, one can see that
\begin{equation*}
\begin{split}
\mathrm{II}&\leq\frac{C}{m(\mathcal{B})^{{\gamma}/n+1/q}}
\cdot m(2\mathcal{B})^{\beta_1/n+1/{p_1}}\cdot m(2\mathcal{B})^{\beta_2/n+1/{p_2}}
\big\|b\big\|_{\mathcal{C}^{p_1,\beta_1}}\big\|f\big\|_{\mathcal{M}^{p_2,\beta_2}}\\
&=\frac{C}{m(\mathcal{B})^{{\gamma}/n+1/q}}\cdot m(2\mathcal{B})^{{(\beta_1+\beta_2+\alpha)}/n+1/{q}}
\big\|b\big\|_{\mathcal{C}^{p_1,\beta_1}}\big\|f\big\|_{\mathcal{M}^{p_2,\beta_2}}\\
&\leq C\big\|b\big\|_{\mathcal{C}^{p_1,\beta_1}}\big\|f\big\|_{\mathcal{M}^{p_2,\beta_2}},
\end{split}
\end{equation*}
where the last inequality follows from the fact that $\gamma=\beta_1+\beta_2+\alpha$. As for the estimate of the term III, we first note that for any $x\in \mathcal{B}$ and $y\in 2\mathcal{B}$, the following estimate
\begin{equation}\label{main45}
\big|\mathcal{P}_{t_{\mathcal{B}}}(x,y)\big|\leq\frac{C}{(t_{\mathcal{B}})^{n/2}}\leq\frac{C}{m(2\mathcal{B})}
\end{equation}
holds by \eqref{G}. Moreover, for any $x\in \mathcal{B}$ and $y\in 2^{k+1}\mathcal{B}\setminus 2^k \mathcal{B}$ with $k\in \mathbb{N}$, one has $|y-x|\approx|y-x_0|$, and hence
\begin{equation}\label{main46}
\big|\mathcal{P}_{t_{\mathcal{B}}}(x,y)\big|\leq C\cdot\frac{(t_{\mathcal{B}})^{n/2}}{|x-y|^{2n}}
\leq C\cdot\frac{(t_{\mathcal{B}})^{n/2}}{|y-x_0|^{2n}}
\end{equation}
by using \eqref{G} again. Therefore, it follows from \eqref{main45} and \eqref{main46} that for any $x\in\mathcal{B}$,
\begin{equation*}
\begin{split}
&\Big|e^{-t_{\mathcal{B}}\mathcal{L}}\big(\big[b-b_{2\mathcal{B}}\big]
\cdot\mathcal{L}^{-\alpha/2}(f)\big)(x)\Big|
=\bigg|\int_{\mathbb R^n}\mathcal{P}_{t_{\mathcal{B}}}(x,y)\cdot\big[b(y)-b_{2\mathcal{B}}\big]\cdot\mathcal{L}^{-\alpha/2}(f)(y)\,dy\bigg|\\
&\leq\int_{2\mathcal{B}}\big|\mathcal{P}_{t_{\mathcal{B}}}(x,y)\big|\cdot\big|b(y)-b_{2\mathcal{B}}\big|
\cdot\big|\mathcal{L}^{-\alpha/2}(f)(y)\big|\,dy
+\sum_{k=1}^{\infty}\int_{2^{k+1}\mathcal{B}\setminus 2^k \mathcal{B}}\big|\mathcal{P}_{t_{\mathcal{B}}}(x,y)\big|\cdot\big|b(y)-b_{2\mathcal{B}}\big|
\cdot\big|\mathcal{L}^{-\alpha/2}(f)(y)\big|\,dy\\
&\leq C\frac{1}{m(2\mathcal{B})}\int_{2\mathcal{B}}\big|b(y)-b_{2\mathcal{B}}\big|\cdot\big|\mathcal{L}^{-\alpha/2}(f)(y)\big|\,dy
+C\sum_{k=1}^{\infty}\int_{2^{k+1}\mathcal{B}\setminus 2^k \mathcal{B}}
\frac{(t_{\mathcal{B}})^{n/2}}{|y-x_0|^{2n}}\cdot\big|b(y)-b_{2\mathcal{B}}\big|\cdot\big|\mathcal{L}^{-\alpha/2}(f)(y)\big|\,dy\\
&\leq C\frac{1}{m(2\mathcal{B})}\int_{2\mathcal{B}}\big|b(y)-b_{2\mathcal{B}}\big|\cdot\big|\mathcal{L}^{-\alpha/2}(f)(y)\big|\,dy
+C\sum_{k=1}^{\infty}\frac{1}{2^{kn}}\frac{1}{m(2^{k+1}\mathcal{B})}\int_{2^{k+1}\mathcal{B}}
\big|b(y)-b_{2\mathcal{B}}\big|\cdot\big|\mathcal{L}^{-\alpha/2}(f)(y)\big|\,dy.
\end{split}
\end{equation*}
As a consequence, we have
\begin{equation*}
\begin{split}
\mathrm{III}&\leq\frac{C}{m(\mathcal{B})^{{\gamma}/n}}\bigg(\frac{1}{m(2\mathcal{B})}\int_{2\mathcal{B}}
\big|b(y)-b_{2\mathcal{B}}\big|\cdot\big|\mathcal{L}^{-\alpha/2}(f)(y)\big|\,dy\bigg)\\
&+\frac{C}{m(\mathcal{B})^{{\gamma}/n}}\bigg(\sum_{k=1}^{\infty}\frac{1}{2^{kn}}\frac{1}{m(2^{k+1}\mathcal{B})}\int_{2^{k+1}\mathcal{B}}
\big|b(y)-b_{2\mathcal{B}}\big|\cdot\big|\mathcal{L}^{-\alpha/2}(f)(y)\big|\,dy\bigg)\\
&:=\mathrm{III}^{(1)}+\mathrm{III}^{(2)}.
\end{split}
\end{equation*}
As in the proof of the term I, we can also show that
\begin{equation*}
\mathrm{III}^{(1)}\leq \frac{C}{m(\mathcal{B})^{{\gamma}/n}}\bigg(\frac{1}{m(2\mathcal{B})}\int_{2\mathcal{B}}
\big|b(y)-b_{2\mathcal{B}}\big|^q\cdot\big|\mathcal{L}^{-\alpha/2}(f)(y)\big|^qdy\bigg)^{1/q}
\leq C\big\|b\big\|_{\mathcal{C}^{p_1,\beta_1}}\big\|f\big\|_{\mathcal{M}^{p_2,\beta_2}}.
\end{equation*}
On the other hand, for any $k\in \mathbb{N}$,
\begin{equation*}
\begin{split}
&\frac{1}{m(2^{k+1}\mathcal{B})}\int_{2^{k+1}\mathcal{B}}
\big|b(y)-b_{2\mathcal{B}}\big|\cdot\big|\mathcal{L}^{-\alpha/2}(f)(y)\big|\,dy\\
\leq&\frac{1}{m(2^{k+1}\mathcal{B})}\int_{2^{k+1}\mathcal{B}}
\big|b(y)-b_{2^{k+1}\mathcal{B}}\big|\cdot\big|\mathcal{L}^{-\alpha/2}(f)(y)\big|\,dy
+\big|b_{2^{k+1}\mathcal{B}}-b_{2\mathcal{B}}\big|\bigg\{\frac{1}{m(2^{k+1}\mathcal{B})}\int_{2^{k+1}\mathcal{B}}
\big|\mathcal{L}^{-\alpha/2}(f)(y)\big|\,dy\bigg\}.
\end{split}
\end{equation*}
Similarly, we can obtain that
\begin{equation*}
\begin{split}
&\frac{1}{m(2^{k+1}\mathcal{B})}\int_{2^{k+1}\mathcal{B}}
\big|b(y)-b_{2^{k+1}\mathcal{B}}\big|\cdot\big|\mathcal{L}^{-\alpha/2}(f)(y)\big|\,dy\\
&\leq\bigg(\frac{1}{m(2^{k+1}\mathcal{B})}\int_{2^{k+1}\mathcal{B}}
\big|b(y)-b_{2^{k+1}\mathcal{B}}\big|^q\cdot\big|\mathcal{L}^{-\alpha/2}(f)(y)\big|^qdy\bigg)^{1/q}\\
&\leq Cm\big(2^{k+1}\mathcal{B}\big)^{\gamma/n}\big\|b\big\|_{\mathcal{C}^{p_1,\beta_1}}\big\|f\big\|_{\mathcal{M}^{p_2,\beta_2}}.
\end{split}
\end{equation*}
In addition, by Lemma \ref{wanglemma1} and H\"{o}lder's inequality,
\begin{equation*}
\begin{split}
&\big|b_{2^{k+1}\mathcal{B}}-b_{2\mathcal{B}}\big|\bigg\{\frac{1}{m(2^{k+1}\mathcal{B})}\int_{2^{k+1}\mathcal{B}}
\big|\mathcal{L}^{-\alpha/2}(f)(y)\big|\,dy\bigg\}\\
&\leq k\cdot2^nm\big(2^{k+1}\mathcal{B}\big)^{\beta_1/n}\big\|b\big\|_{\mathcal{C}^{p_1,\beta_1}}
\bigg\{\frac{1}{m(2^{k+1}\mathcal{B})}\int_{2^{k+1}\mathcal{B}}
\big|\mathcal{L}^{-\alpha/2}(f)(y)\big|^s\,dy\bigg\}^{1/s}\\
&=k\cdot2^nm\big(2^{k+1}\mathcal{B}\big)^{\gamma/n}\big\|b\big\|_{\mathcal{C}^{p_1,\beta_1}}
\frac{1}{m(2^{k+1}\mathcal{B})^{{(\gamma-\beta_1)}/n}}\bigg\{\frac{1}{m(2^{k+1}\mathcal{B})}\int_{2^{k+1}\mathcal{B}}
\big|\mathcal{L}^{-\alpha/2}(f)(y)\big|^s\,dy\bigg\}^{1/s},
\end{split}
\end{equation*}
where the number $s$ is the same as above. Furthermore, from \eqref{main43} and Theorem \ref{generalizedmorrey}, it then follows that
\begin{equation*}
\begin{split}
&\big|b_{2^{k+1}\mathcal{B}}-b_{2\mathcal{B}}\big|\bigg\{\frac{1}{m(2^{k+1}\mathcal{B})}\int_{2^{k+1}\mathcal{B}}
\big|\mathcal{L}^{-\alpha/2}(f)(y)\big|\,dy\bigg\}\\
&\leq Ck\cdot m\big(2^{k+1}\mathcal{B}\big)^{\gamma/n}\big\|b\big\|_{\mathcal{C}^{p_1,\beta_1}}
\big\|\mathcal{L}^{-\alpha/2}(f)\big\|_{\mathcal{M}^{s,\alpha+\beta_2}}
\leq Ck\cdot m\big(2^{k+1}\mathcal{B}\big)^{\gamma/n}
\big\|b\big\|_{\mathcal{C}^{p_1,\beta_1}}\big\|f\big\|_{\mathcal{M}^{p_2,\beta_2}}.
\end{split}
\end{equation*}
Summing up the above estimates, we conclude that for each $k\in \mathbb{N}$,
\begin{equation*}
\frac{1}{m(2^{k+1}\mathcal{B})}\int_{2^{k+1}\mathcal{B}}
\big|b(y)-b_{2\mathcal{B}}\big|\cdot\big|\mathcal{L}^{-\alpha/2}(f)(y)\big|\,dy
\leq Ck\cdot m\big(2^{k+1}\mathcal{B}\big)^{\gamma/n}
\big\|b\big\|_{\mathcal{C}^{p_1,\beta_1}}\big\|f\big\|_{\mathcal{M}^{p_2,\beta_2}}.
\end{equation*}
This implies that
\begin{equation*}
\begin{split}
\mathrm{III}^{(2)}&\leq C\sum_{k=1}^{\infty}\frac{k}{2^{kn}}
\left[\frac{m(2^{k+1}\mathcal{B})}{m(\mathcal{B})}\right]^{{\gamma}/n}
\big\|b\big\|_{\mathcal{C}^{p_1,\beta_1}}\big\|f\big\|_{\mathcal{M}^{p_2,\beta_2}}\\
&\leq C\sum_{k=1}^{\infty}\frac{k}{2^{k(n-\gamma)}}
\big\|b\big\|_{\mathcal{C}^{p_1,\beta_1}}\big\|f\big\|_{\mathcal{M}^{p_2,\beta_2}}
\leq C\big\|b\big\|_{\mathcal{C}^{p_1,\beta_1}}\big\|f\big\|_{\mathcal{M}^{p_2,\beta_2}},
\end{split}
\end{equation*}
where in the last inequality we have used the fact that $n>0>\gamma$. Consequently,
\begin{equation*}
\mathrm{III} \leq C\big\|b\big\|_{\mathcal{C}^{p_1,\beta_1}}\big\|f\big\|_{\mathcal{M}^{p_2,\beta_2}}.
\end{equation*}
Let us now deal with the term IV. Using the estimates \eqref{main45} and \eqref{main46}, we can deduce that for any $x\in \mathcal{B}$,
\begin{equation*}
\begin{split}
&\Big|e^{-t_{\mathcal{B}}\mathcal{L}}\mathcal{L}^{-\alpha/2}\big(\big[b-b_{2\mathcal{B}}\big]f_1\big)(x)\Big|
=\bigg|\int_{\mathbb R^n}\mathcal{P}_{t_{\mathcal{B}}}(x,y)
\cdot\mathcal{L}^{-\alpha/2}\big(\big[b-b_{2\mathcal{B}}\big]f_1\big)(y)\,dy\bigg|\\
&\leq C\frac{1}{m(2\mathcal{B})}\int_{2\mathcal{B}}\Big|\mathcal{L}^{-\alpha/2}\big(\big[b-b_{2\mathcal{B}}\big]f_1\big)(y)\Big|\,dy
+C\sum_{k=1}^{\infty}\frac{1}{2^{kn}}\frac{1}{m(2^{k+1}\mathcal{B})}\int_{2^{k+1}\mathcal{B}}
\Big|\mathcal{L}^{-\alpha/2}\big(\big[b-b_{2\mathcal{B}}\big]f_1\big)(y)\Big|\,dy.
\end{split}
\end{equation*}
As a consequence, we have
\begin{equation*}
\begin{split}
\mathrm{IV}&\leq\frac{C}{m(\mathcal{B})^{{\gamma}/n}}\bigg(\frac{1}{m(2\mathcal{B})}\int_{2\mathcal{B}}
\Big|\mathcal{L}^{-\alpha/2}\big(\big[b-b_{2\mathcal{B}}\big]f_1\big)(y)\Big|\,dy\bigg)\\
&+\frac{C}{m(\mathcal{B})^{{\gamma}/n}}\bigg(\sum_{k=1}^{\infty}\frac{1}{2^{kn}}\frac{1}{m(2^{k+1}\mathcal{B})}\int_{2^{k+1}\mathcal{B}}
\Big|\mathcal{L}^{-\alpha/2}\big(\big[b-b_{2\mathcal{B}}\big]f_1\big)(y)\Big|\,dy\bigg)\\
&:=\mathrm{IV}^{(1)}+\mathrm{IV}^{(2)}.
\end{split}
\end{equation*}
As in the proof of the term II, we can also prove that
\begin{equation*}
\mathrm{IV}^{(1)}\leq \frac{C}{m(\mathcal{B})^{{\gamma}/n}}\bigg(\frac{1}{m(2\mathcal{B})}\int_{2\mathcal{B}}
\Big|\mathcal{L}^{-\alpha/2}\big(\big[b-b_{2\mathcal{B}}\big]f_1\big)(y)\Big|^qdy\bigg)^{1/q}
\leq C\big\|b\big\|_{\mathcal{C}^{p_1,\beta_1}}\big\|f\big\|_{\mathcal{M}^{p_2,\beta_2}}.
\end{equation*}
On the other hand, Theorem \ref{12} and H\"{o}lder's inequality imply that for any $k\in \mathbb{N}$,
\begin{equation*}
\begin{split}
&\frac{1}{m(2^{k+1}\mathcal{B})}\int_{2^{k+1}\mathcal{B}}
\Big|\mathcal{L}^{-\alpha/2}\big(\big[b-b_{2\mathcal{B}}\big]f_1\big)(y)\Big|\,dy\\
&\leq\bigg(\frac{1}{m(2^{k+1}\mathcal{B})}\int_{2^{k+1}\mathcal{B}}
\Big|\mathcal{L}^{-\alpha/2}\big(\big[b-b_{2\mathcal{B}}\big]f_1\big)(y)\Big|^qdy\bigg)^{1/q}\\
&\leq\frac{1}{m(2^{k+1}\mathcal{B})^{1/q}}
\bigg(\int_{2\mathcal{B}}\big|b(y)-b_{2\mathcal{B}}\big|^p\cdot\big|f(y)\big|^pdy\bigg)^{1/p},
\end{split}
\end{equation*}
where the number $p$ is the same as above. Since $1/p=1/{p_1}+1/{p_2}$, another application of the H\"{o}lder inequality yields
\begin{equation*}
\begin{split}
&\frac{1}{m(2^{k+1}\mathcal{B})}\int_{2^{k+1}\mathcal{B}}
\Big|\mathcal{L}^{-\alpha/2}\big(\big[b-b_{2\mathcal{B}}\big]f_1\big)(y)\Big|\,dy\\
&\leq\frac{1}{m(2^{k+1}\mathcal{B})^{1/q}}\bigg(\int_{2\mathcal{B}}\big|b(y)-b_{2\mathcal{B}}\big|^{p_1}dy\bigg)^{1/{p_1}}
\bigg(\int_{2\mathcal{B}}\big|f(y)\big|^{p_2}dy\bigg)^{1/{p_2}}\\
&\leq\frac{1}{m(2^{k+1}\mathcal{B})^{1/q}}\cdot m(2\mathcal{B})^{{(\beta_1+\beta_2)}/n+1/{p}}
\big\|b\big\|_{\mathcal{C}^{p_1,\beta_1}}\big\|f\big\|_{\mathcal{M}^{p_2,\beta_2}}.
\end{split}
\end{equation*}
This, together with \eqref{main44}, gives us that
\begin{equation*}
\begin{split}
\mathrm{IV}^{(2)}&\leq C\cdot\frac{m(2\mathcal{B})^{\gamma/n}}{m(\mathcal{B})^{{\gamma}/n}}
\sum_{k=1}^{\infty}\frac{1}{2^{kn}}\left[\frac{m(2\mathcal{B})}{m(2^{k+1}\mathcal{B})}\right]^{1/q}
\big\|b\big\|_{\mathcal{C}^{p_1,\beta_1}}\big\|f\big\|_{\mathcal{M}^{p_2,\beta_2}}\\
&\leq C\sum_{k=1}^{\infty}\frac{1}{2^{kn}}
\big\|b\big\|_{\mathcal{C}^{p_1,\beta_1}}\big\|f\big\|_{\mathcal{M}^{p_2,\beta_2}}
\leq C\big\|b\big\|_{\mathcal{C}^{p_1,\beta_1}}\big\|f\big\|_{\mathcal{M}^{p_2,\beta_2}}.
\end{split}
\end{equation*}
Consequently,
\begin{equation*}
\mathrm{IV}\leq C\big\|b\big\|_{\mathcal{C}^{p_1,\beta_1}}\big\|f\big\|_{\mathcal{M}^{p_2,\beta_2}}.
\end{equation*}
It remains to estimate the last term V. Note that if $x\in\mathcal{B}$ and $y\in 2^{k+1}\mathcal{B}\setminus 2^k \mathcal{B}$ with $k\in \mathbb{N}$, then $|y-x|\approx|y-x_0|$. This fact, together with Lemma \ref{wanglemma2}, implies that for any $x\in \mathcal{B}$,
\begin{equation*}
\begin{split}
&\Big|\mathcal{L}^{-\alpha/2}\big(\big[b-b_{2\mathcal{B}}\big]f_2\big)(x)
-e^{-t_{\mathcal{B}}\mathcal{L}}\mathcal{L}^{-\alpha/2}\big(\big[b-b_{2\mathcal{B}}\big]f_2\big)(x)\Big|\\
&=\Big|\big(I-e^{-t_{\mathcal{B}}\mathcal{L}}\big)\mathcal{L}^{-\alpha/2}\big(\big[b-b_{2\mathcal{B}}\big]f_2\big)(x)\Big|\\
&\leq\int_{(2\mathcal{B})^{\complement}}\big|\widetilde{K}_{\alpha,t_{\mathcal{B}}}(x,y)\big|
\cdot\big|\big[b(y)-b_{2\mathcal{B}}\big]f(y)\big|\,dy\\
&\leq C\sum_{k=1}^{\infty}\int_{2^{k+1}\mathcal{B}\setminus 2^k \mathcal{B}}
\frac{1}{|x-y|^{n-\alpha}}\cdot\frac{r_{\mathcal{B}}^2}{|x-y|^{2}}\big|b(y)-b_{2\mathcal{B}}\big|\big|f(y)\big|\,dy\\
&\leq C\sum_{k=1}^{\infty}\frac{1}{2^{2k}}\cdot\frac{1}{m(2^{k+1}\mathcal{B})^{1-\alpha/n}}
\int_{2^{k+1}\mathcal{B}}\big|b(y)-b_{2\mathcal{B}}\big|\big|f(y)\big|\,dy.
\end{split}
\end{equation*}
Hence, by using Lemma \ref{wanglemma1}, we obtain
\begin{equation*}
\begin{split}
\mathrm{V}&=\frac{1}{m(\mathcal{B})^{{\gamma}/n}}\bigg(\frac{1}{m(\mathcal{B})}
\int_{\mathcal{B}}\Big|\mathcal{L}^{-\alpha/2}\big(\big[b-b_{2\mathcal{B}}\big]f_2\big)(x)
-e^{-t_{\mathcal{B}}\mathcal{L}}\mathcal{L}^{-\alpha/2}\big(\big[b-b_{2\mathcal{B}}\big]f_2\big)(x)\Big|^{q}dx\bigg)^{1/{q}}\\
&\leq \frac{C}{m(\mathcal{B})^{{\gamma}/n}}\sum_{k=1}^{\infty}\frac{1}{2^{2k}}\cdot\frac{1}{m(2^{k+1}\mathcal{B})^{1-\alpha/n}}
\int_{2^{k+1}\mathcal{B}}\big|b(y)-b_{2\mathcal{B}}\big|\big|f(y)\big|\,dy\\
&\leq \frac{C}{m(\mathcal{B})^{{\gamma}/n}}
\sum_{k=1}^{\infty}\frac{k}{2^{2k}}\cdot m\big(2^{k+1}\mathcal{B}\big)^{{(\beta_1+\beta_2+\alpha)}/n}
\big\|b\big\|_{\mathcal{C}^{p_1,\beta_1}}\big\|f\big\|_{\mathcal{M}^{p_2,\beta_2}}.\\
\end{split}
\end{equation*}
Since $\gamma=\beta_1+\beta_2+\alpha$, it then follows that
\begin{equation*}
\begin{split}
\mathrm{V}&\leq C\sum_{k=1}^{\infty}\frac{k}{2^{2k}}\cdot\left[\frac{m(2^{k+1}\mathcal{B})}{m(\mathcal{B})}\right]^{{\gamma}/n}
\big\|b\big\|_{\mathcal{C}^{p_1,\beta_1}}\big\|f\big\|_{\mathcal{M}^{p_2,\beta_2}}\\
&\leq C\sum_{k=1}^{\infty}\frac{k}{2^{(2-\gamma)k}}
\big\|b\big\|_{\mathcal{C}^{p_1,\beta_1}}\big\|f\big\|_{\mathcal{M}^{p_2,\beta_2}}\\
&\leq C\big\|b\big\|_{\mathcal{C}^{p_1,\beta_1}}\big\|f\big\|_{\mathcal{M}^{p_2,\beta_2}},
\end{split}
\end{equation*}
where in the last inequality we have used the fact that $\gamma<2$. Combining the above estimates I, II, III, IV with V, we obtain the desired result \eqref{main41}, and hence the proof of Theorem \ref{thm1} is complete.
\end{proof}

In particular, if $\beta_2=-n/{p_2}$, then we can get the corresponding result for $\big[b,\mathcal L^{-\alpha/2}\big]$.
\begin{corollary}
Let $0<\alpha<n$. Suppose that $b\in \mathcal{C}^{p_1,\beta_1}(\mathbb R^n)$ with $1\leq p_1<\infty$ and $-n/{p_1}\leq\beta_1<0$. Then for any
$f\in L^{p_2}(\mathbb R^n)$ with $1<p_2<n/{\alpha}$, there exists a positive constant $C>0$ independent of $b$ and $f$ such that
\begin{equation*}
\big\|\big[b,\mathcal L^{-\alpha/2}\big](f)\big\|_{\mathcal{C}^{q,\gamma}_{\mathcal{L}}}
\leq C\big\|b\big\|_{\mathcal{C}^{p_1,\beta_1}}\big\|f\big\|_{L^{p_2}},
\end{equation*}
provided that
\begin{equation*}
1/q=1/{p_1}+1/{p_2}-\alpha/n\quad \& \quad \gamma=\beta_1+\alpha-n/{p_2}.
\end{equation*}
\end{corollary}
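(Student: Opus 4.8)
The plan is to deduce the corollary directly from Theorem \ref{thm1} by specializing the Morrey parameter. The key observation is that when $\beta_2 = -n/p_2$, the Morrey space $\mathcal{M}^{p_2,\beta_2}(\mathbb R^n)$ reduces to $L^{p_2}(\mathbb R^n)$ with equality of norms, as noted in the discussion following the definition of the classical Morrey spaces: indeed $\mathcal{M}^{p,-n/p}(\mathbb R^n) = L^p(\mathbb R^n)$ because the normalization factor $m(\mathcal{B})^{\beta/n + 1/p}$ becomes $m(\mathcal{B})^0 = 1$, and taking the supremum over all balls recovers the full $L^p$ norm. So first I would verify that the hypotheses of Theorem \ref{thm1} are met with this choice: we need $1 < p_2 < n/\alpha$ (given), $-n/p_2 \le \beta_2 < -\alpha$ (here $\beta_2 = -n/p_2$, so we need $-n/p_2 < -\alpha$, equivalently $p_2 < n/\alpha$, which holds), and $b \in \mathcal{C}^{p_1,\beta_1}(\mathbb R^n)$ with $1 \le p_1 < \infty$, $-n/p_1 \le \beta_1 < 0$ (given).

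Next I would substitute into the index relations of Theorem \ref{thm1}. The exponent relation $1/q = 1/p_1 + 1/p_2 - \alpha/n$ is unchanged. The smoothness parameter becomes
\begin{equation*}
\gamma = \beta_1 + \beta_2 + \alpha = \beta_1 - n/p_2 + \alpha,
\end{equation*}
which is exactly the value claimed in the corollary. One should also check that $\gamma$ lies in the admissible range $-n/q \le \gamma \le 1$ required for $\mathcal{C}^{q,\gamma}_{\mathcal{L}}(\mathbb R^n)$ to be a meaningful space; this follows automatically since Theorem \ref{thm1} already guarantees the conclusion makes sense for the stated indices, and the remark after Theorem \ref{generalizedmorrey} together with the relation between the exponents confirms the bounds.

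Finally, applying Theorem \ref{thm1} with $f \in \mathcal{M}^{p_2,-n/p_2}(\mathbb R^n) = L^{p_2}(\mathbb R^n)$ gives
\begin{equation*}
\big\|\big[b,\mathcal L^{-\alpha/2}\big](f)\big\|_{\mathcal{C}^{q,\gamma}_{\mathcal{L}}}
\le C\big\|b\big\|_{\mathcal{C}^{p_1,\beta_1}}\big\|f\big\|_{\mathcal{M}^{p_2,-n/p_2}}
= C\big\|b\big\|_{\mathcal{C}^{p_1,\beta_1}}\big\|f\big\|_{L^{p_2}},
\end{equation*}
with $C$ independent of $b$ and $f$, which is precisely the assertion. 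There is essentially no obstacle here: the only point requiring a moment's care is confirming the strict inequality $\beta_2 < -\alpha$ holds at the endpoint $\beta_2 = -n/p_2$, which is equivalent to the given constraint $p_2 < n/\alpha$ and hence causes no trouble. The corollary is thus a straightforward endpoint specialization of the main theorem.
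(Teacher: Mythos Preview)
Your proposal is correct and matches the paper's approach exactly: the paper introduces the corollary with the sentence ``In particular, if $\beta_2=-n/{p_2}$, then we can get the corresponding result for $\big[b,\mathcal L^{-\alpha/2}\big]$,'' and likewise justifies the analogous higher-order Corollary~\ref{thm6} by noting that $\mathcal{M}^{p_2,\beta_2}(\mathbb R^n)=L^{p_2}(\mathbb R^n)$ when $\beta_2=-n/{p_2}$. Your verification of the hypothesis $\beta_2<-\alpha$ at the endpoint is a nice touch that the paper leaves implicit.
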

In the present situation, we have
\begin{equation*}
1/q=1/{p_1}+{(\beta_1-\gamma)}/n\quad \& \quad\gamma=\beta_1+n(1/{p_1}-1/q).
\end{equation*}

Let us now study the corresponding estimates for the higher-order commutators. Let $2\leq m\in \mathbb{N}$ and $b(x)$ be a locally integrable function on $\mathbb R^n$. The higher-order commutator $\big[b,\mathcal L^{-\alpha/2}\big]^m$ generated by $\mathcal{L}^{-\alpha/2}$ and $b$ is defined by
\begin{equation*}
\big[b,\mathcal L^{-\alpha/2}\big]^m(f)(x):=\big[b,\dots\big[b,\big[b,\mathcal L^{-\alpha/2}\big]\big]\big](f)(x).
\end{equation*}
Since for $0<\alpha<n$, the kernel of $\mathcal{L}^{-\alpha/2}$ is $\mathcal{K}_{\alpha}(x,y)$, we then have
\begin{equation}\label{last1}
\big[b,\mathcal L^{-\alpha/2}\big]^m(f)(x)=\int_{\mathbb R^n}\big[b(x)-b(y)\big]^m\mathcal{K}_{\alpha}(x,y)f(y)\,dy.
\end{equation}
\begin{itemize}
  \item When $m\geq2$ and $b\in \mathrm{BMO}(\mathbb R^n)$, it was first proved by Mo and Lu that for any $0<\alpha<1$, the higher-order commutator $\big[b,\mathcal{L}^{-\alpha/2}\big]^m$ is bounded from $L^{p}(\mathbb R^n)$ to $L^{q}(\mathbb R^n)$, where $1<p<n/{\alpha}$ and $1/q=1/p-\alpha/n$, see \cite[Theorem 1.7]{mo}. Actually, for all $0<\alpha<n$, the conclusion of Theorem 1.7 in \cite{mo} is also true, by using Lemma \ref{wanglemma2} with the range of $0<\alpha<n$. See also \cite[Theorem 1.4]{auscher} for the weighted case. Furthermore, for all $0<\alpha<n$, it can be shown that the higher-order commutator $\big[b,\mathcal{L}^{-\alpha/2}\big]^m$ is bounded from $\mathcal{M}^{p,\beta}(\mathbb R^n)$ to $\mathcal{M}^{q,\alpha+\beta}(\mathbb R^n)$ by induction on $m$, where $-n/p\leq\beta<-\alpha$. Here we omit the details for brevity.
  \item Let us see what happens if the symbol function $b(x)$ belongs to the space $\mathrm{Lip}_{\beta_1}(\mathbb R^n)$. When $m\geq2$ and $b\in \mathrm{Lip}_{\beta_1}(\mathbb R^n)$ with $0<\beta_1\leq1$ and $0<\alpha+m\beta_1<n$, then by \eqref{last1}, the pointwise inequality \eqref{kernelk} and the definition of $\mathrm{Lip}_{\beta_1}(\mathbb R^n)$, we can deduce that for any $x\in\mathbb R^n$,
\begin{equation}\label{point5}
\begin{split}
\Big|\big[b,\mathcal L^{-\alpha/2}\big]^m(f)(x)\Big|
&\leq\int_{\mathbb R^n}\big|b(x)-b(y)\big|^m\cdot|\mathcal{K}_{\alpha}(x,y)||f(y)|\,dy\\
&\leq C\big\|b\big\|^{m}_{\mathrm{Lip}_{\beta_1}}\int_{\mathbb R^n}\frac{|f(y)|}{|x-y|^{n-\alpha-m\beta_1}}dy\\
&\leq C\big\|b\big\|^{m}_{\mathrm{Lip}_{\beta_1}}I_{\alpha+m\beta_1}(|f|)(x).
\end{split}
\end{equation}
Thus, by \eqref{point5} and Theorem \ref{11}, we can prove that the higher-order commutator $\big[b,\mathcal L^{-\alpha/2}\big]^m$ is a bounded operator from $L^{p}(\mathbb R^n)$ to $L^q(\mathbb R^n)$, whenever
\begin{equation*}
1<p<\frac{n}{\alpha+m\beta_1}\quad \mbox{and} \quad \frac{\,1\,}{q}=\frac{\,1\,}{p}-\frac{\alpha+m\beta_1}{n}.
\end{equation*}
This result was obtained by Mo and Lu in \cite[Theorem 1.8]{mo}. Moreover, by using Theorem \ref{13} and \eqref{point5}, we can also show that the higher-order commutator $\big[b,\mathcal{L}^{-\alpha/2}\big]^m$ is a bounded operator from $\mathcal{M}^{p,\beta}(\mathbb R^n)$ to $\mathcal{M}^{q,\gamma}(\mathbb R^n)$, whenever
\begin{equation*}
-\frac{n}{\,p\,}\leq\beta<-(\alpha+m\beta_1),~~~\frac{\,1\,}{q}=\frac{\,1\,}{p}-\frac{\alpha+m\beta_1}{n},~~~\mbox{and}~~~ \gamma=\alpha+m\beta_1+\beta.
\end{equation*}
\end{itemize}
Inspired by the above results, it will be interesting to consider the boundedness of $\big[b,\mathcal{L}^{-\alpha/2}\big]^m$, when the symbol function $b(x)$ belongs to the space $\mathcal{C}^{p_1,\beta_1}(\mathbb R^n)$ with $1\leq p_1<\infty$ and $-n/{p_1}\leq\beta_1<0$. Below we study the case $m=2$, the general case follows by using the same method.

\begin{theorem}\label{thm2}
Let $0<\alpha<n$, $1<p_2<n/{\alpha}$ and $-n/{p_2}\leq\beta_2<(-\alpha)$. Suppose that $b\in \mathcal{C}^{p_1,\beta_1}(\mathbb R^n)$ with $1\leq p_1<\infty$ and $-n/{p_1}\leq\beta_1<0$. Then for any $f\in \mathcal{M}^{p_2,\beta_2}(\mathbb R^n)$, there exists a positive constant $C>0$ independent of $b$ and $f$ such that
\begin{equation*}
\big\|\big[b,\mathcal{L}^{-\alpha/2}\big]^2(f)\big\|_{\mathcal{C}^{q,\gamma}_{\mathcal{L}}}
\leq C\big\|b\big\|^2_{\mathcal{C}^{p_1,\beta_1}}\big\|f\big\|_{\mathcal{M}^{p_2,\beta_2}},
\end{equation*}
provided that
\begin{equation*}
1/q=2/{p_1}+1/{p_2}-\alpha/n\quad \& \quad \gamma=2\beta_1+\beta_2+\alpha.
\end{equation*}
\end{theorem}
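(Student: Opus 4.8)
The plan is to run the argument used for Theorem~\ref{thm1}, the only genuinely new feature being the richer algebraic expansion forced by the double commutator. Fix a ball $\mathcal{B}=B(x_0,r_{\mathcal{B}})$, set $t_{\mathcal{B}}=r_{\mathcal{B}}^{2}$, and split $f=f_1+f_2$ with $f_1=f\cdot\chi_{2\mathcal{B}}$ and $f_2=f\cdot\chi_{(2\mathcal{B})^{\complement}}$. Writing $b(x)-b(y)=(b(x)-b_{2\mathcal{B}})-(b(y)-b_{2\mathcal{B}})$ in \eqref{last1} and squaring gives
\begin{equation*}
\big[b,\mathcal L^{-\alpha/2}\big]^{2}(f)=\big(b-b_{2\mathcal{B}}\big)^{2}\mathcal L^{-\alpha/2}(f)-2\big(b-b_{2\mathcal{B}}\big)\mathcal L^{-\alpha/2}\big(\big[b-b_{2\mathcal{B}}\big]f\big)+\mathcal L^{-\alpha/2}\big(\big[b-b_{2\mathcal{B}}\big]^{2}f\big),
\end{equation*}
together with the companion identity for $e^{-t_{\mathcal{B}}\mathcal L}\big(\big[b,\mathcal L^{-\alpha/2}\big]^{2}f\big)$. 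Decomposing the last two summands further through $f=f_1+f_2$ and using the triangle inequality, the left side of the claimed inequality is dominated by a finite sum of terms, each an analogue of one of I--V in the proof of Theorem~\ref{thm1}, but now carrying either an extra factor $(b-b_{2\mathcal{B}})$ or an extra occurrence of $\mathcal L^{-\alpha/2}\big(\big[b-b_{2\mathcal{B}}\big]\,\cdot\,\big)$. Throughout I use the auxiliary exponents $1/s=1/{p_2}-\alpha/n$, $1/{\rho}=2/{p_1}+1/{p_2}$, $1/{\rho_1}=1/{p_1}+1/{p_2}$ and $1/{q_1}=1/{\rho_1}-\alpha/n$, so that $1/q=2/{p_1}+1/s=1/{\rho}-\alpha/n=1/{p_1}+1/{q_1}$; and I note that the hypotheses give $\alpha+\beta_2<0$, $\alpha+\beta_1+\beta_2<0$, hence $\gamma=2\beta_1+\beta_2+\alpha<0$.

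First I would dispose of the ``local'' terms, i.e.\ those supported on $\mathcal{B}$ and built from $f_1$, exactly as I and II were handled in Theorem~\ref{thm1}. For $(b-b_{2\mathcal{B}})^{2}\mathcal L^{-\alpha/2}(f)$ one uses the generalized H\"older inequality with exponents $p_1,p_1,s$, the definition of $\mathcal{C}^{p_1,\beta_1}(\mathbb R^n)$, and the bound $\|\mathcal L^{-\alpha/2}f\|_{\mathcal{M}^{s,\alpha+\beta_2}}\le C\|f\|_{\mathcal{M}^{p_2,\beta_2}}$ from Theorem~\ref{generalizedmorrey} (valid since $\alpha+\beta_2<0$), together with $\gamma-2\beta_1=\alpha+\beta_2$. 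For $\mathcal L^{-\alpha/2}\big(\big[b-b_{2\mathcal{B}}\big]^{2}f_1\big)$ one bounds $\big\|\big[b-b_{2\mathcal{B}}\big]^{2}f_1\big\|_{L^{\rho}}\le\|b-b_{2\mathcal{B}}\|_{L^{p_1}(2\mathcal{B})}^{2}\,\|f\|_{L^{p_2}(2\mathcal{B})}$ by generalized H\"older and then applies $\mathcal L^{-\alpha/2}\colon L^{\rho}\to L^{q}$ from Theorem~\ref{12}; for the cross term $(b-b_{2\mathcal{B}})\mathcal L^{-\alpha/2}\big(\big[b-b_{2\mathcal{B}}\big]f_1\big)$ one splits by H\"older into exponents $p_1$ and $q_1$ and applies $\mathcal L^{-\alpha/2}\colon L^{\rho_1}\to L^{q_1}$. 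In each case the powers of $m(\mathcal{B})$ and $m(2\mathcal{B})$ balance against $m(\mathcal{B})^{\gamma/n+1/q}$ precisely because $\gamma=2\beta_1+\beta_2+\alpha$. The one new elementary ingredient, needed here and below, is the squared-weight analogue of Lemma~\ref{wanglemma1}(3): for every $k\in\mathbb N$,
\begin{equation*}
\frac{1}{m(2^{k}\mathcal{B})}\int_{2^{k}\mathcal{B}}\big|b(x)-b_{\mathcal{B}}\big|^{2}\big|f(x)\big|\,dx\le Ck^{2}\Big[m\big(2^{k}\mathcal{B}\big)^{(2\beta_1+\beta_2)/n}\cdot\big\|b\big\|^{2}_{\mathcal{C}^{p_1,\beta_1}}\big\|f\big\|_{\mathcal{M}^{p_2,\beta_2}}\Big],
\end{equation*}
proved as in Lemma~\ref{wanglemma1} by writing $|b-b_{\mathcal{B}}|^{2}\le 2|b-b_{2^{k}\mathcal{B}}|^{2}+2|b_{2^{k}\mathcal{B}}-b_{\mathcal{B}}|^{2}$, applying generalized H\"older (exponents $p_1,p_1,p_2$) to the first piece and the telescoping bound $|b_{2^{k}\mathcal{B}}-b_{\mathcal{B}}|\le Ck\,m(2^{k}\mathcal{B})^{\beta_1/n}\|b\|_{\mathcal{C}^{p_1,\beta_1}}$ of Lemma~\ref{wanglemma1}(2) to the second.

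Next I would treat the ``semigroup-averaged'' terms, those of the form $e^{-t_{\mathcal{B}}\mathcal L}(\,\cdots\,)$, as III and IV were treated: insert the pointwise Gaussian kernel estimates \eqref{main45}--\eqref{main46}, reducing matters to $\frac{1}{m(2\mathcal{B})}\int_{2\mathcal{B}}(\cdots)\,dy+\sum_{k\ge1}2^{-kn}\frac{1}{m(2^{k+1}\mathcal{B})}\int_{2^{k+1}\mathcal{B}}(\cdots)\,dy$, and then apply the local estimates above on each annulus. When $\mathcal L^{-\alpha/2}\big(\big[b-b_{2\mathcal{B}}\big]f\big)$ occurs under the average, one first produces a local bound for it on $2^{k+1}\mathcal{B}$ by the further splitting $\big[b-b_{2\mathcal{B}}\big]f=\big[b-b_{2\mathcal{B}}\big]f\chi_{2^{k+2}\mathcal{B}}+\big[b-b_{2\mathcal{B}}\big]f\chi_{(2^{k+2}\mathcal{B})^{\complement}}$, using Theorem~\ref{12} for the first summand and the pointwise inequality \eqref{kernelk} for the second. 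Each such averaged term collects the oscillation difference $|b_{2^{k+1}\mathcal{B}}-b_{2\mathcal{B}}|\le Ck\,m(2^{k+1}\mathcal{B})^{\beta_1/n}\|b\|_{\mathcal{C}^{p_1,\beta_1}}$ at most twice, hence a factor of order $k^{2}$, which is still summable against $2^{-kn}[m(2^{k+1}\mathcal{B})/m(\mathcal{B})]^{\gamma/n}\approx 2^{-k(n-\gamma)}$ because $\gamma<n$ (indeed $\gamma<0$).

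It remains to handle the ``far'' contributions built from $f_2$. The terms $\mathcal L^{-\alpha/2}\big(\big[b-b_{2\mathcal{B}}\big]^{2}f_2\big)$ and $e^{-t_{\mathcal{B}}\mathcal L}\mathcal L^{-\alpha/2}\big(\big[b-b_{2\mathcal{B}}\big]^{2}f_2\big)$ combine into $\big(I-e^{-t_{\mathcal{B}}\mathcal L}\big)\mathcal L^{-\alpha/2}\big(\big[b-b_{2\mathcal{B}}\big]^{2}f_2\big)$, to which Lemma~\ref{wanglemma2} applies, supplying the extra decay $t_{\mathcal{B}}/|x-y|^{2}\approx 2^{-2k}$ on the $k$-th dyadic annulus; combined with the squared-weight estimate above this yields the convergent series $\sum_k k^{2}2^{-2k}[m(2^{k+1}\mathcal{B})/m(\mathcal{B})]^{\gamma/n}\approx\sum_k k^{2}2^{-k(2-\gamma)}$, since $\gamma<2$. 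The remaining two far pieces, $(b-b_{2\mathcal{B}})\mathcal L^{-\alpha/2}\big(\big[b-b_{2\mathcal{B}}\big]f_2\big)$ and its semigroup average, are treated like their $f_1$-counterparts (H\"older with exponents $p_1,q_1$, respectively Gaussian-kernel dyadic summation), the only new point being the pointwise bound $\big|\mathcal L^{-\alpha/2}\big(\big[b-b_{2\mathcal{B}}\big]f_2\big)(x)\big|\le C\,m(\mathcal{B})^{(\alpha+\beta_1+\beta_2)/n}\|b\|_{\mathcal{C}^{p_1,\beta_1}}\|f\|_{\mathcal{M}^{p_2,\beta_2}}$ for $x\in\mathcal{B}$ (from \eqref{kernelk}, Lemma~\ref{wanglemma1}(3) and $\alpha+\beta_1+\beta_2<0$), together with its analogue on the dyadic dilates $2^{k+1}\mathcal{B}$, obtained by one more near/far splitting around each dilate. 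Summing the finitely many contributions then gives the asserted estimate, with constant independent of $b$ and $f$. The main obstacle is organizational rather than conceptual: tracking the nested near/far decompositions and checking that in each of the resulting terms the dyadic series converges, i.e.\ that the two logarithmic factors produced by the double commutator are absorbed by the strict inequalities $\gamma<n$ and $\gamma<2$; the genuinely new piece of analysis is the estimate for the cross term $(b-b_{2\mathcal{B}})\mathcal L^{-\alpha/2}\big(\big[b-b_{2\mathcal{B}}\big]f\big)$ and its average, where the double-commutator structure actually enters.
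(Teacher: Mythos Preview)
Your proposal is correct, and the overall architecture---the algebraic expansion of $[b,\mathcal L^{-\alpha/2}]^2$ around $b_{2\mathcal B}$, the $f=f_1+f_2$ split, the Gaussian bounds \eqref{main45}--\eqref{main46} for the semigroup-averaged terms, and Lemma~\ref{wanglemma2} for the $(I-e^{-t_{\mathcal B}\mathcal L})\mathcal L^{-\alpha/2}\big((b-b_{2\mathcal B})^2f_2\big)$ piece---coincides with the paper's proof. The route diverges in how the cross term $(b-b_{2\mathcal B})\,\mathcal L^{-\alpha/2}\big((b-b_{2\mathcal B})f\big)$ and its semigroup average are handled. You treat these directly: you split $f=f_1+f_2$ inside and, for the averaged term on each dyadic dilate $2^{k+1}\mathcal B$, do a \emph{further} near/far splitting at scale $2^{k+2}\mathcal B$, so that Theorem~\ref{12} handles the inner near piece and the kernel bound \eqref{kernelk} handles the inner far piece. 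The paper instead uses the algebraic identity
\[
(b-b_{2\mathcal B})\,\mathcal L^{-\alpha/2}\big((b-b_{2\mathcal B})f\big)=(b-b_{2\mathcal B})^2\,\mathcal L^{-\alpha/2}(f)-(b-b_{2\mathcal B})\,[b,\mathcal L^{-\alpha/2}](f),
\]
and then invokes an auxiliary Morrey-to-Morrey bound for the \emph{first-order} commutator (the paper's Theorem~\ref{thm4}), namely $\|[b,\mathcal L^{-\alpha/2}]f\|_{\mathcal M^{\widetilde q,\beta_1+\beta_2+\alpha}}\le C\|b\|_{\mathcal C^{p_1,\beta_1}}\|f\|_{\mathcal M^{p_2,\beta_2}}$ with $1/\widetilde q=1/p_1+1/p_2-\alpha/n$; after this, the cross term collapses to an already-treated term plus a single H\"older step with exponents $p_1,\widetilde q$, and the averaged cross term reduces to Lemma~\ref{wanglemma1}(3) applied to the Morrey function $[b,\mathcal L^{-\alpha/2}]f$. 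Your approach is entirely self-contained and avoids proving a separate Morrey estimate, at the cost of nested dyadic decompositions and the extra bookkeeping you correctly anticipate; the paper's approach is more modular, yields the cleaner linear factor $k$ instead of your $k^2$ in the dyadic sums, and is what makes the induction to $[b,\mathcal L^{-\alpha/2}]^m$, $m\ge2$, transparent.
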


Before proving our main theorem, let us first establish the following result.

\begin{theorem}\label{thm4}
Let $0<\alpha<n$, $1<p_2<n/{\alpha}$ and $-n/{p_2}\leq\beta_2<(-\alpha)$. Suppose that $b\in \mathcal{C}^{p_1,\beta_1}(\mathbb R^n)$ with $1\leq p_1<\infty$ and $-n/{p_1}\leq\beta_1<0$. Then for any $f\in \mathcal{M}^{p_2,\beta_2}(\mathbb R^n)$, there exists a positive constant $C>0$ independent of $b$ and $f$ such that
\begin{equation*}
\big\|\big[b,\mathcal L^{-\alpha/2}\big](f)\big\|_{\mathcal{M}^{q,\gamma}}
\leq C\big\|b\big\|_{\mathcal{C}^{p_1,\beta_1}}\big\|f\big\|_{\mathcal{M}^{p_2,\beta_2}},
\end{equation*}
provided that
\begin{equation*}
1/q=1/{p_1}+1/{p_2}-\alpha/n\quad \& \quad \gamma=\beta_1+\beta_2+\alpha.
\end{equation*}
\end{theorem}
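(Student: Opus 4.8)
The plan is to mimic the decomposition used in the proof of Theorem \ref{thm1}, but to exploit the fact that we now target the ordinary Morrey space $\mathcal{M}^{q,\gamma}(\mathbb{R}^n)$ rather than the $\mathcal{L}$-adapted space $\mathcal{C}^{q,\gamma}_{\mathcal{L}}(\mathbb{R}^n)$. Concretely, fix a ball $\mathcal{B}=B(x_0,r_{\mathcal{B}})$ and split $f=f_1+f_2$ with $f_1=f\chi_{2\mathcal{B}}$ and $f_2=f\chi_{(2\mathcal{B})^{\complement}}$. Write
\begin{equation*}
\big[b,\mathcal{L}^{-\alpha/2}\big](f)(x)=\big[b(x)-b_{2\mathcal{B}}\big]\mathcal{L}^{-\alpha/2}(f)(x)-\mathcal{L}^{-\alpha/2}\big(\big[b-b_{2\mathcal{B}}\big]f_1\big)(x)-\mathcal{L}^{-\alpha/2}\big(\big[b-b_{2\mathcal{B}}\big]f_2\big)(x),
\end{equation*}
so that the Morrey norm $\frac{1}{m(\mathcal{B})^{\gamma/n}}\big(\frac{1}{m(\mathcal{B})}\int_{\mathcal{B}}|\cdot|^q\big)^{1/q}$ is controlled by three pieces, say $\mathrm{J}_1+\mathrm{J}_2+\mathrm{J}_3$, coming from these three summands. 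The goal is to bound each piece by $C\|b\|_{\mathcal{C}^{p_1,\beta_1}}\|f\|_{\mathcal{M}^{p_2,\beta_2}}$, uniformly in $\mathcal{B}$.

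For $\mathrm{J}_1$ I would proceed exactly as for the term $\mathrm{I}$ in Theorem \ref{thm1}: introduce $1/s=1/p_2-\alpha/n$ so that $1/q=1/p_1+1/s$, apply Hölder's inequality on $\mathcal{B}$ to split the integrand into $|b-b_{2\mathcal{B}}|^{p_1}$ and $|\mathcal{L}^{-\alpha/2}(f)|^{s}$, use the definition of $\mathcal{C}^{p_1,\beta_1}$ to absorb the first factor (picking up $m(\mathcal{B})^{\beta_1/n}\|b\|_{\mathcal{C}^{p_1,\beta_1}}$), and then invoke Theorem \ref{generalizedmorrey} to see $\mathcal{L}^{-\alpha/2}(f)\in\mathcal{M}^{s,\alpha+\beta_2}$ with $\alpha+\beta_2=\gamma-\beta_1<0$. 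For $\mathrm{J}_2$ I would argue as for the term $\mathrm{II}$: use the pointwise domination $|\mathcal{L}^{-\alpha/2}g|\le CI_\alpha(|g|)$ from \eqref{dominate1} together with Theorem \ref{12} (boundedness of $\mathcal{L}^{-\alpha/2}$ from $L^p$ to $L^q$ with $1/p=1/p_1+1/p_2$), then Hölder's inequality on $2\mathcal{B}$ and the definitions of $\mathcal{C}^{p_1,\beta_1}$ and $\mathcal{M}^{p_2,\beta_2}$; the exponents match because $\gamma=\beta_1+\beta_2+\alpha$ and $m(2\mathcal{B})\approx m(\mathcal{B})$.

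The genuinely new piece is $\mathrm{J}_3$, the far interaction. Here, for $x\in\mathcal{B}$, I would use the kernel estimate \eqref{kernelk} for $\mathcal{K}_\alpha$, decompose $(2\mathcal{B})^{\complement}$ into the annuli $2^{k+1}\mathcal{B}\setminus 2^k\mathcal{B}$, and note that $|x-y|\approx|y-x_0|\approx 2^k r_{\mathcal{B}}$ on each annulus, giving
\begin{equation*}
\Big|\mathcal{L}^{-\alpha/2}\big(\big[b-b_{2\mathcal{B}}\big]f_2\big)(x)\Big|\le C\sum_{k=1}^{\infty}\frac{1}{m(2^{k+1}\mathcal{B})^{1-\alpha/n}}\int_{2^{k+1}\mathcal{B}}\big|b(y)-b_{2\mathcal{B}}\big|\,|f(y)|\,dy.
\end{equation*}
Then Lemma \ref{wanglemma1}(3) bounds the inner integral by $Ck\,m(2^{k+1}\mathcal{B})^{(\beta_1+\beta_2)/n+1}\|b\|_{\mathcal{C}^{p_1,\beta_1}}\|f\|_{\mathcal{M}^{p_2,\beta_2}}$, so that the $k$-th term is $\lesssim k\,m(2^{k+1}\mathcal{B})^{(\beta_1+\beta_2+\alpha)/n}=k\,m(2^{k+1}\mathcal{B})^{\gamma/n}$ times the norms. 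Since this expression is \emph{constant} in $x$, the Morrey quotient over $\mathcal{B}$ collapses to $\sum_k k\,[m(2^{k+1}\mathcal{B})/m(\mathcal{B})]^{\gamma/n}=\sum_k k\,2^{k\gamma}$, which converges because $\gamma<0$. (Note this is where the argument is easier than in Theorem \ref{thm1}: we do not need the difference-operator kernel bound of Lemma \ref{wanglemma2} nor the extra decay factor $t/|x-y|^2$, because there is no subtracted $e^{-t_{\mathcal{B}}\mathcal{L}}$ term to handle.) The main obstacle is purely bookkeeping — tracking the many exponents and the factors $k$ from Lemma \ref{wanglemma1} so that the geometric series in $\mathrm{J}_3$ (and the analogous sums hidden in $\mathrm{J}_1$, $\mathrm{J}_2$ after reducing to averages over dilated balls) converge, which they do precisely under the hypotheses $\beta_2<-\alpha$, $\beta_1<0$, and $\gamma=\beta_1+\beta_2+\alpha<0$. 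Collecting $\mathrm{J}_1+\mathrm{J}_2+\mathrm{J}_3$ and taking the supremum over all balls $\mathcal{B}$ yields the claimed bound.
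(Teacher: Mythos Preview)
Your proposal is correct and follows essentially the same approach as the paper: the same splitting $f=f_1+f_2$, the same three-term decomposition of the commutator, the same treatment of $\mathrm{J}_1$ and $\mathrm{J}_2$ via H\"older plus Theorems \ref{generalizedmorrey} and \ref{12}, and the same handling of $\mathrm{J}_3$ via the kernel bound \eqref{kernelk}, dyadic annuli, Lemma \ref{wanglemma1}(3), and the convergent series $\sum_k k\,2^{k\gamma}$ with $\gamma<0$. One minor remark: there are no hidden dyadic sums in $\mathrm{J}_1$ or $\mathrm{J}_2$---those terms are controlled directly over $\mathcal{B}$ and $2\mathcal{B}$ without any annular decomposition---so the only geometric series to track is the one in $\mathrm{J}_3$.
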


\begin{proof}[Proof of Theorem $\ref{thm4}$]
Let $f\in \mathcal{M}^{p_2,\beta_2}(\mathbb R^n)$ with $1<p_2<n/{\alpha}$ and $-n/{p_2}\leq\beta_2<(-\alpha)$. For any fixed ball $\mathcal{B}=B(x_0,r_{\mathcal{B}})\subset\mathbb R^n$, we decompose the function $f(x)$ into two parts.
\begin{equation*}
f(x)=f(x)\cdot\chi_{2\mathcal{B}}+f(x)\cdot\chi_{(2\mathcal{B})^{\complement}}:=f_1(x)+f_2(x).
\end{equation*}
Observe that for any fixed ball $\mathcal{B}\subset\mathbb R^n$,
\begin{equation*}
\big[b,\mathcal L^{-\alpha/2}\big](f)(x)=\big[b(x)-b_{2\mathcal{B}}\big]\cdot \mathcal{L}^{-\alpha/2}(f)(x)
-\mathcal{L}^{-\alpha/2}\big(\big[b-b_{2\mathcal{B}}\big]f_1\big)(x)-\mathcal{L}^{-\alpha/2}\big(\big[b-b_{2\mathcal{B}}\big]f_2\big)(x).
\end{equation*}
Then we can write
\begin{equation*}
\begin{split}
&\frac{1}{m(\mathcal{B})^{{\gamma}/n}}\bigg(\frac{1}{m(\mathcal{B})}
\int_{\mathcal{B}}\Big|\big[b,\mathcal L^{-\alpha/2}\big](f)(x)\Big|^qdx\bigg)^{1/{q}}\\
\leq&\frac{1}{m(\mathcal{B})^{{\gamma}/n}}\bigg(\frac{1}{m(\mathcal{B})}
\int_{\mathcal{B}}\Big|\big[b(x)-b_{2\mathcal{B}}\big]\cdot\mathcal{L}^{-\alpha/2}(f)(x)\Big|^{q}dx\bigg)^{1/{q}}
+\frac{1}{m(\mathcal{B})^{{\gamma}/n}}\bigg(\frac{1}{m(\mathcal{B})}
\int_{\mathcal{B}}\Big|\mathcal{L}^{-\alpha/2}\big(\big[b-b_{2\mathcal{B}}\big]f_1\big)(x)\Big|^{q}dx\bigg)^{1/{q}}\\
+&\frac{1}{m(\mathcal{B})^{{\gamma}/n}}\bigg(\frac{1}{m(\mathcal{B})}
\int_{\mathcal{B}}\Big|\mathcal{L}^{-\alpha/2}\big(\big[b-b_{2\mathcal{B}}\big]f_2\big)(x)\Big|^{q}dx\bigg)^{1/{q}}\\
:=&\mathrm{I+II+III}.
\end{split}
\end{equation*}
Arguing as in the proof of Theorem \ref{thm1}, we have
\begin{equation*}
\mathrm{I+II}\leq C\big\|b\big\|_{\mathcal{C}^{p_1,\beta_1}}\big\|f\big\|_{\mathcal{M}^{p_2,\beta_2}}.
\end{equation*}
Let us now estimate the term III. Note that if $x\in\mathcal{B}$ and $y\in 2^{k+1}\mathcal{B}\setminus 2^k \mathcal{B}$ with $k\in \mathbb{N}$, then $|y-x|\approx|y-x_0|$. Since for $0<\alpha<n$, the kernel of $\mathcal{L}^{-\alpha/2}$ is $\mathcal{K}_{\alpha}(x,y)$, by using the kernel estimate \eqref{kernelk}, we can deduce that for any $x\in \mathcal{B}$,
\begin{equation*}
\begin{split}
&\Big|\mathcal{L}^{-\alpha/2}\big(\big[b-b_{2\mathcal{B}}\big]f_2\big)(x)\Big|\\
&\leq\int_{(2\mathcal{B})^{\complement}}\big|\mathcal{K}_{\alpha}(x,y)\big|
\cdot\big|\big[b(y)-b_{2\mathcal{B}}\big]f(y)\big|\,dy\\
&\leq C\sum_{k=1}^{\infty}\int_{2^{k+1}\mathcal{B}\setminus 2^k \mathcal{B}}
\frac{1}{|x-y|^{n-\alpha}}\cdot\big|b(y)-b_{2\mathcal{B}}\big|\big|f(y)\big|\,dy\\
&\leq C\sum_{k=1}^{\infty}\frac{1}{m(2^{k+1}\mathcal{B})^{1-\alpha/n}}
\int_{2^{k+1}\mathcal{B}}\big|b(y)-b_{2\mathcal{B}}\big|\big|f(y)\big|\,dy,
\end{split}
\end{equation*}
which together with Lemma \ref{wanglemma1} implies that
\begin{equation*}
\begin{split}
\mathrm{III}&\leq\frac{C}{m(\mathcal{B})^{{\gamma}/n}}\sum_{k=1}^{\infty}\frac{1}{m(2^{k+1}\mathcal{B})^{1-\alpha/n}}
\int_{2^{k+1}\mathcal{B}}\big|b(y)-b_{2\mathcal{B}}\big|\big|f(y)\big|\,dy\\
&\leq\frac{C}{m(\mathcal{B})^{{\gamma}/n}}\sum_{k=1}^{\infty}k\Big[m\big(2^{k+1}\mathcal{B}\big)^{{(\beta_1+\beta_2+\alpha)}/n}
\cdot\big\|b\big\|_{\mathcal{C}^{p_1,\beta_1}}\big\|f\big\|_{\mathcal{M}^{p_2,\beta_2}}\Big].
\end{split}
\end{equation*}
Since $\beta_1<0$ and $\beta_2+\alpha<0$, we see that
\begin{equation*}
\gamma=\beta_1+\beta_2+\alpha<0.
\end{equation*}
Consequently,
\begin{equation*}
\begin{split}
\mathrm{III}&\leq C\sum_{k=1}^{\infty}k\cdot\left[\frac{m(2^{k+1}\mathcal{B})}{m(\mathcal{B})}\right]^{{\gamma}/n}
\big\|b\big\|_{\mathcal{C}^{p_1,\beta_1}}\big\|f\big\|_{\mathcal{M}^{p_2,\beta_2}}\\
&\leq C\sum_{k=1}^{\infty}\frac{k}{(2^{-\gamma})^{k}}
\big\|b\big\|_{\mathcal{C}^{p_1,\beta_1}}\big\|f\big\|_{\mathcal{M}^{p_2,\beta_2}}\\
&\leq C\big\|b\big\|_{\mathcal{C}^{p_1,\beta_1}}\big\|f\big\|_{\mathcal{M}^{p_2,\beta_2}}.
\end{split}
\end{equation*}
Summing up the above estimates of I, II and III, and then taking the supremum over all balls $\mathcal{B}$ in $\mathbb R^n$, we conclude the proof of Theorem \ref{thm4}.
\end{proof}

We are now ready to show our main theorem of this section.

\begin{proof}[Proof of Theorem $\ref{thm2}$]
Let $f\in \mathcal{M}^{p_2,\beta_2}(\mathbb R^n)$ with $1<p_2<n/{\alpha}$ and $-n/{p_2}\leq\beta_2<(-\alpha)$.
For any fixed ball $\mathcal{B}$ in $\mathbb R^n$, it suffices to verify that
\begin{equation}\label{main411}
\begin{split}
\mathbf{W}&:=\frac{1}{m(\mathcal{B})^{{\gamma}/n}}\bigg(\frac{1}{m(\mathcal{B})}
\int_{\mathcal{B}}\Big|\big[b,\mathcal L^{-\alpha/2}\big]^2(f)(x)-e^{-t_{\mathcal{B}}\mathcal{L}}
\big(\big[b,\mathcal L^{-\alpha/2}\big]^2f\big)(x)\Big|^qdx\bigg)^{1/{q}}\\
&\leq C\big\|b\big\|^2_{\mathcal{C}^{p_1,\beta_1}}\big\|f\big\|_{\mathcal{M}^{p_2,\beta_2}}.
\end{split}
\end{equation}
To prove \eqref{main411}, we first observe that for any fixed ball $\mathcal{B}$ in $\mathbb R^n$,
\begin{equation*}
\big[b(x)-b(y)\big]^2=\big[b(x)-b_{2\mathcal{B}}\big]^2-2\big[b(x)-b_{2\mathcal{B}}\big]\cdot\big[b(y)-b_{2\mathcal{B}}\big]
+\big[b(y)-b_{2\mathcal{B}}\big]^2.
\end{equation*}
Let $\mathcal{B}=B(x_0,r_{\mathcal{B}})$ be a fixed ball centered at $x_0$ and with radius $r_{\mathcal{B}}$. As usual, we decompose the function $f(x)$ in the following way
\begin{equation*}
f(x)=f(x)\cdot\chi_{2\mathcal{B}}+f(x)\cdot\chi_{(2\mathcal{B})^{\complement}}:=f_1(x)+f_2(x).
\end{equation*}
Then $\big[b,\mathcal{L}^{-\alpha/2}\big]^2(f)$ may be written as
\begin{equation*}
\begin{split}
\big[b,\mathcal{L}^{-\alpha/2}\big]^2(f)(x)&=\big[b(x)-b_{2\mathcal{B}}\big]^2\cdot \mathcal{L}^{-\alpha/2}(f)(x)
-2\big[b(x)-b_{2\mathcal{B}}\big]\cdot\mathcal{L}^{-\alpha/2}\big(\big[b-b_{2\mathcal{B}}\big]f\big)(x)\\
&+\mathcal{L}^{-\alpha/2}\big(\big[b-b_{2\mathcal{B}}\big]^2f_1\big)(x)+\mathcal{L}^{-\alpha/2}\big(\big[b-b_{2\mathcal{B}}\big]^2f_2\big)(x),
\end{split}
\end{equation*}
for any $x\in\mathbb R^n$. So we have
\begin{equation*}
\begin{split}
\mathbf{W}&\leq\frac{1}{m(\mathcal{B})^{{\gamma}/n}}\bigg(\frac{1}{m(\mathcal{B})}
\int_{\mathcal{B}}\Big|\big[b(x)-b_{2\mathcal{B}}\big]^2\cdot\mathcal{L}^{-\alpha/2}(f)(x)\Big|^{q}dx\bigg)^{1/{q}}\\
+&\frac{1}{m(\mathcal{B})^{{\gamma}/n}}\bigg(\frac{1}{m(\mathcal{B})}
\int_{\mathcal{B}}\Big|\mathcal{L}^{-\alpha/2}\big(\big[b-b_{2\mathcal{B}}\big]^2f_1\big)(x)\Big|^{q}dx\bigg)^{1/{q}}\\
+&\frac{1}{m(\mathcal{B})^{{\gamma}/n}}\bigg(\frac{1}{m(\mathcal{B})}
\int_{\mathcal{B}}\Big|e^{-t_{\mathcal{B}}\mathcal{L}}\big(\big[b-b_{2\mathcal{B}}\big]^2
\cdot\mathcal{L}^{-\alpha/2}(f)\big)(x)\Big|^{q}dx\bigg)^{1/{q}}\\
+&\frac{1}{m(\mathcal{B})^{{\gamma}/n}}\bigg(\frac{1}{m(\mathcal{B})}
\int_{\mathcal{B}}\Big|e^{-t_{\mathcal{B}}\mathcal{L}}\mathcal{L}^{-\alpha/2}
\big(\big[b-b_{2\mathcal{B}}\big]^2f_1\big)(x)\Big|^{q}dx\bigg)^{1/{q}}\\
+&\frac{1}{m(\mathcal{B})^{{\gamma}/n}}\bigg(\frac{1}{m(\mathcal{B})}
\int_{\mathcal{B}}\Big|2\big[b(x)-b_{2\mathcal{B}}\big]
\cdot\mathcal{L}^{-\alpha/2}\big(\big[b-b_{2\mathcal{B}}\big]f\big)(x)\Big|^{q}dx\bigg)^{1/{q}}\\
+&\frac{1}{m(\mathcal{B})^{{\gamma}/n}}\bigg(\frac{1}{m(\mathcal{B})}
\int_{\mathcal{B}}\Big|2e^{-t_{\mathcal{B}}\mathcal{L}}\Big(\big[b-b_{2\mathcal{B}}\big]
\mathcal{L}^{-\alpha/2}\big(\big[b-b_{2\mathcal{B}}\big]f\big)\Big)(x)\Big|^{q}dx\bigg)^{1/{q}}\\
+&\frac{1}{m(\mathcal{B})^{{\gamma}/n}}\bigg(\frac{1}{m(\mathcal{B})}
\int_{\mathcal{B}}\Big|\mathcal{L}^{-\alpha/2}\big(\big[b-b_{2\mathcal{B}}\big]^2f_2\big)(x)
-e^{-t_{\mathcal{B}}\mathcal{L}}\mathcal{L}^{-\alpha/2}\big(\big[b-b_{2\mathcal{B}}\big]^2f_2\big)(x)\Big|^{q}dx\bigg)^{1/{q}}\\
:=&\mathrm{I+II+III+IV+V+VI+VII}.
\end{split}
\end{equation*}
Here $t_{\mathcal{B}}=r_{\mathcal{B}}^2$ and $r_{\mathcal{B}}$ is the radius of the ball $\mathcal{B}$. Let us now estimate I, II, III, IV, V, VI and VII, respectively. Let $s$ be the same as in Theorem \ref{thm1}. Then
\begin{equation}\label{main412}
1/q=1/{p_1}+1/{p_1}+1/s.
\end{equation}
For the first term I, by H\"{o}lder's inequality and \eqref{main412}, we obtain
\begin{equation*}
\begin{split}
\mathrm{I}&\leq\frac{1}{m(\mathcal{B})^{{\gamma}/n+1/q}}\bigg(\int_{\mathcal{B}}\big|b(x)-b_{2\mathcal{B}}\big|^{p_1}dx\bigg)^{1/{p_1}}
\bigg(\int_{\mathcal{B}}\big|b(x)-b_{2\mathcal{B}}\big|^{p_1}dx\bigg)^{1/{p_1}}
\times\bigg(\int_{\mathcal{B}}\big|\mathcal{L}^{-\alpha/2}(f)(x)\big|^{s}dx\bigg)^{1/{s}}\\
&\leq\big\|b\big\|^2_{\mathcal{C}^{p_1,\beta_1}}\times
\frac{1}{m(\mathcal{B})^{{\gamma}/n+1/q-2\beta_1/n-2/{p_1}}}
\bigg(\int_{\mathcal{B}}\big|\mathcal{L}^{-\alpha/2}(f)(x)\big|^{s}dx\bigg)^{1/{s}}\\
&=\big\|b\big\|^2_{\mathcal{C}^{p_1,\beta_1}}\times\frac{1}{m(\mathcal{B})^{{(\gamma-2\beta_1)}/n}}
\bigg(\frac{1}{m(\mathcal{B})}\int_{\mathcal{B}}\big|\mathcal{L}^{-\alpha/2}(f)(x)\big|^{s}dx\bigg)^{1/{s}}.
\end{split}
\end{equation*}
Notice that
\begin{equation}\label{main413}
\gamma-2\beta_1=\beta_2+\alpha<0.
\end{equation}
This fact, together with Theorem \ref{generalizedmorrey}, implies that
\begin{equation*}
\begin{split}
\mathrm{I}\leq\big\|b\big\|^2_{\mathcal{C}^{p_1,\beta_1}}\big\|\mathcal{L}^{-\alpha/2}(f)\big\|_{\mathcal{M}^{s,\alpha+\beta_2}}
\leq C\big\|b\big\|^2_{\mathcal{C}^{p_1,\beta_1}}\big\|f\big\|_{\mathcal{M}^{p_2,\beta_2}}.
\end{split}
\end{equation*}
For the second term II, we now choose a real number $\widetilde{p}>1$ so that
\begin{equation*}
1/{\widetilde{p}}=1/{p_1}+1/{p_1}+1/{p_2}.
\end{equation*}
Then we have
\begin{equation}\label{main414}
1/q=1/{\widetilde{p}}-\alpha/n.
\end{equation}
By using H\"{o}lder's inequality and Theorem \ref{12}, we have
\begin{equation*}
\begin{split}
\mathrm{II}&\leq\frac{C}{m(\mathcal{B})^{{\gamma}/n+1/q}}\bigg(\int_{2\mathcal{B}}
\Big|\big[b(x)-b_{2\mathcal{B}}\big]^2\cdot f(x)\Big|^{\widetilde{p}}dx\bigg)^{1/{\widetilde{p}}}\\
&\leq\frac{C}{m(\mathcal{B})^{{\gamma}/n+1/q}}
\bigg(\int_{2\mathcal{B}}\big|b(x)-b_{2\mathcal{B}}\big|^{p_1}dx\bigg)^{1/{p_1}}
\bigg(\int_{2\mathcal{B}}\big|b(x)-b_{2\mathcal{B}}\big|^{p_1}dx\bigg)^{1/{p_1}}
\bigg(\int_{2\mathcal{B}}\big|f(x)\big|^{p_2}dx\bigg)^{1/{p_2}}.
\end{split}
\end{equation*}
Moreover, it follows from \eqref{main414} that
\begin{equation*}
\begin{split}
\mathrm{II}&\leq\frac{C}{m(\mathcal{B})^{{\gamma}/n+1/q}}
\cdot m(2\mathcal{B})^{2\beta_1/n+2/{p_1}}\cdot m(2\mathcal{B})^{\beta_2/n+1/{p_2}}
\big\|b\big\|^2_{\mathcal{C}^{p_1,\beta_1}}\big\|f\big\|_{\mathcal{M}^{p_2,\beta_2}}\\
&=\frac{C}{m(\mathcal{B})^{{\gamma}/n+1/q}}\cdot m(2\mathcal{B})^{{(2\beta_1+\beta_2+\alpha)}/n+1/{q}}
\big\|b\big\|^2_{\mathcal{C}^{p_1,\beta_1}}\big\|f\big\|_{\mathcal{M}^{p_2,\beta_2}}\\
&\leq C\big\|b\big\|^2_{\mathcal{C}^{p_1,\beta_1}}\big\|f\big\|_{\mathcal{M}^{p_2,\beta_2}},
\end{split}
\end{equation*}
where in the last step we have used the fact that $\gamma=2\beta_1+\beta_2+\alpha$.
Arguing as in the proof of Theorem \ref{thm1}, we can also prove that
\begin{equation*}
\mathrm{III+IV}\leq C\big\|b\big\|^2_{\mathcal{C}^{p_1,\beta_1}}\big\|f\big\|_{\mathcal{M}^{p_2,\beta_2}}.
\end{equation*}
Let us now turn to deal with the term V. It is easy to see that for any $x\in \mathcal{B}$,
\begin{equation*}
\big[b,\mathcal L^{-\alpha/2}\big](f)(x)=\big[b(x)-b_{2\mathcal{B}}\big]\cdot \mathcal{L}^{-\alpha/2}(f)(x)
-\mathcal{L}^{-\alpha/2}\big(\big[b-b_{2\mathcal{B}}\big]f\big)(x),
\end{equation*}
and then we obtain
\begin{equation}\label{second}
\begin{split}
&2\big[b(x)-b_{2\mathcal{B}}\big]
\cdot\mathcal{L}^{-\alpha/2}\big(\big[b-b_{2\mathcal{B}}\big]f\big)(x)\\
&=2\big[b(x)-b_{2\mathcal{B}}\big]^2\cdot \mathcal{L}^{-\alpha/2}(f)(x)
-2\big[b(x)-b_{2\mathcal{B}}\big]\cdot\big[b,\mathcal L^{-\alpha/2}\big](f)(x).
\end{split}
\end{equation}
Consequently, one can write
\begin{equation*}
\begin{split}
\mathrm{V}&\leq\frac{1}{m(\mathcal{B})^{{\gamma}/n}}\bigg(\frac{1}{m(\mathcal{B})}
\int_{\mathcal{B}}\Big|2\big[b(x)-b_{2\mathcal{B}}\big]^2
\cdot\mathcal{L}^{-\alpha/2}(f)(x)\Big|^{q}dx\bigg)^{1/{q}}\\
&+\frac{1}{m(\mathcal{B})^{{\gamma}/n}}\bigg(\frac{1}{m(\mathcal{B})}
\int_{\mathcal{B}}\Big|2\big[b(x)-b_{2\mathcal{B}}\big]
\cdot\big[b,\mathcal L^{-\alpha/2}\big](f)(x)\Big|^{q}dx\bigg)^{1/{q}}\\
&:=\mathrm{V}^{(1)}+\mathrm{V}^{(2)}.
\end{split}
\end{equation*}
By the estimate of I, we see that
\begin{equation*}
\mathrm{V}^{(1)}\leq C\big\|b\big\|^2_{\mathcal{C}^{p_1,\beta_1}}\big\|f\big\|_{\mathcal{M}^{p_2,\beta_2}}.
\end{equation*}
On the other hand, we choose a real number $\widetilde{q}>1$ so that
\begin{equation}\label{main415}
1/{\widetilde{q}}=1/{p_1}+1/{p_2}-\alpha/n\Longrightarrow 1/q=1/{p_1}+1/{\widetilde{q}}.
\end{equation}
It follows from H\"{o}lder's inequality and \eqref{main415} that
\begin{equation*}
\begin{split}
\mathrm{V}^{(2)}&\leq\frac{2}{m(\mathcal{B})^{{\gamma}/n+1/q}}\bigg(\int_{\mathcal{B}}\big|b(x)-b_{2\mathcal{B}}\big|^{p_1}dx\bigg)^{1/{p_1}}
\times\bigg(\int_{\mathcal{B}}\big|\big[b,\mathcal{L}^{-\alpha/2}\big](f)(x)\big|^{\widetilde{q}}dx\bigg)^{1/{\widetilde{q}}}\\
&\leq\frac{2}{m(\mathcal{B})^{{\gamma}/n+1/q}} \cdot m(2\mathcal{B})^{\beta_1/n+1/{p_1}}\big\|b\big\|_{\mathcal{C}^{p_1,\beta_1}}
\bigg(\int_{\mathcal{B}}\big|\big[b,\mathcal{L}^{-\alpha/2}\big](f)(x)\big|^{\widetilde{q}}dx\bigg)^{1/{\widetilde{q}}}\\
&\leq C\big\|b\big\|_{\mathcal{C}^{p_1,\beta_1}}\frac{1}{m(\mathcal{B})^{{(\gamma-\beta_1)}/n}}
\bigg(\frac{1}{m(\mathcal{B})}\int_{\mathcal{B}}\big|\big[b,\mathcal{L}^{-\alpha/2}\big](f)(x)\big|^{\widetilde{q}}dx\bigg)^{1/{\widetilde{q}}}.
\end{split}
\end{equation*}
Moreover, by using Theorem \ref{thm4}, we thus obtain
\begin{equation*}
\begin{split}
\mathrm{V}^{(2)}&\leq C\big\|b\big\|_{\mathcal{C}^{p_1,\beta_1}}
\big\|\big[b,\mathcal L^{-\alpha/2}\big](f)\big\|_{\mathcal{M}^{\widetilde{q},\beta_1+\beta_2+\alpha}}\\
&\leq C\big\|b\big\|^2_{\mathcal{C}^{p_1,\beta_1}}\big\|f\big\|_{\mathcal{M}^{p_2,\beta_2}}.
\end{split}
\end{equation*}
Summing up the estimates of $\mathrm{V}^{(1)}$ and $\mathrm{V}^{(2)}$, we conclude that
\begin{equation*}
\mathrm{V}\leq C\big\|b\big\|^2_{\mathcal{C}^{p_1,\beta_1}}\big\|f\big\|_{\mathcal{M}^{p_2,\beta_2}}.
\end{equation*}
As for the term VI, it follows from the estimates \eqref{main45} and \eqref{main46} that for any $x\in \mathcal{B}$,
\begin{equation*}
\begin{split}
&\Big|e^{-t_{\mathcal{B}}\mathcal{L}}\Big(\big[b-b_{2\mathcal{B}}\big]
\mathcal{L}^{-\alpha/2}\big(\big[b-b_{2\mathcal{B}}\big]f\big)\Big)(x)\Big|\\
&=\bigg|\int_{\mathbb R^n}\mathcal{P}_{t_{\mathcal{B}}}(x,y)
\cdot\big[b(y)-b_{2\mathcal{B}}\big]\mathcal{L}^{-\alpha/2}\big(\big[b-b_{2\mathcal{B}}\big]f\big)(y)\,dy\bigg|\\
&\leq C\frac{1}{m(2\mathcal{B})}\int_{2\mathcal{B}}
\Big|\big[b(y)-b_{2\mathcal{B}}\big]\cdot\mathcal{L}^{-\alpha/2}\big(\big[b-b_{2\mathcal{B}}\big]f\big)(y)\Big|\,dy\\
&+C\sum_{k=1}^{\infty}\frac{1}{2^{kn}}\frac{1}{m(2^{k+1}\mathcal{B})}\int_{2^{k+1}\mathcal{B}}
\Big|\big[b(y)-b_{2\mathcal{B}}\big]\cdot\mathcal{L}^{-\alpha/2}\big(\big[b-b_{2\mathcal{B}}\big]f\big)(y)\Big|\,dy.
\end{split}
\end{equation*}
As a consequence, we have
\begin{equation*}
\begin{split}
\mathrm{VI}&\leq\frac{C}{m(\mathcal{B})^{{\gamma}/n}}\bigg(\frac{1}{m(2\mathcal{B})}\int_{2\mathcal{B}}
\Big|\big[b(y)-b_{2\mathcal{B}}\big]\cdot\mathcal{L}^{-\alpha/2}\big(\big[b-b_{2\mathcal{B}}\big]f\big)(y)\Big|\,dy\bigg)\\
&+\frac{C}{m(\mathcal{B})^{{\gamma}/n}}\bigg(\sum_{k=1}^{\infty}\frac{1}{2^{kn}}\frac{1}{m(2^{k+1}\mathcal{B})}\int_{2^{k+1}\mathcal{B}}
\Big|\big[b(y)-b_{2\mathcal{B}}\big]\cdot\mathcal{L}^{-\alpha/2}\big(\big[b-b_{2\mathcal{B}}\big]f\big)(y)\Big|\,dy\bigg)\\
&:=\mathrm{VI}^{(1)}+\mathrm{VI}^{(2)}.
\end{split}
\end{equation*}
As in the estimate of V, we also obtain that
\begin{equation*}
\begin{split}
\mathrm{VI}^{(1)}&\leq\frac{C}{m(\mathcal{B})^{{\gamma}/n}}\bigg(\frac{1}{m(2\mathcal{B})}
\int_{2\mathcal{B}}\Big|\big[b(y)-b_{2\mathcal{B}}\big]
\cdot\mathcal{L}^{-\alpha/2}\big(\big[b-b_{2\mathcal{B}}\big]f\big)(y)\Big|^{q}dy\bigg)^{1/{q}}\\
&\leq C\big\|b\big\|^2_{\mathcal{C}^{p_1,\beta_1}}\big\|f\big\|_{\mathcal{M}^{p_2,\beta_2}}.
\end{split}
\end{equation*}
By using equation \eqref{second}, the term $\mathrm{VI}^{(2)}$ can be further divided into two parts.
\begin{equation*}
\begin{split}
\mathrm{VI}^{(2)}&\leq
\frac{C}{m(\mathcal{B})^{{\gamma}/n}}\bigg(\sum_{k=1}^{\infty}\frac{1}{2^{kn}}\frac{1}{m(2^{k+1}\mathcal{B})}\int_{2^{k+1}\mathcal{B}}
\Big|\big[b(y)-b_{2\mathcal{B}}\big]^2\cdot\mathcal{L}^{-\alpha/2}(f)(y)\Big|\,dy\bigg)\\
&+\frac{C}{m(\mathcal{B})^{{\gamma}/n}}\bigg(\sum_{k=1}^{\infty}\frac{1}{2^{kn}}\frac{1}{m(2^{k+1}\mathcal{B})}\int_{2^{k+1}\mathcal{B}}
\Big|\big[b(y)-b_{2\mathcal{B}}\big]\cdot\big[b,\mathcal L^{-\alpha/2}\big](f)(y)\Big|\,dy\bigg)\\
&:=\mathrm{VI}^{(3)}+\mathrm{VI}^{(4)}.
\end{split}
\end{equation*}
Applying Theorem \ref{thm4} again, we know that $\big[b,\mathcal L^{-\alpha/2}\big](f)$ belongs to the space $\mathcal{M}^{\widetilde{q},\beta_1+\beta_2+\alpha}(\mathbb R^n)$, and
\begin{equation*}
\big\|\big[b,\mathcal L^{-\alpha/2}\big](f)\big\|_{\mathcal{M}^{\widetilde{q},\beta_1+\beta_2+\alpha}}
\leq C\big\|b\big\|_{\mathcal{C}^{p_1,\beta_1}}\big\|f\big\|_{\mathcal{M}^{p_2,\beta_2}}.
\end{equation*}
Here the number $\widetilde{q}$ is the same as above. This fact together with Lemma \ref{wanglemma1} gives us that
\begin{equation*}
\begin{split}
\mathrm{VI}^{(4)}&\leq \frac{C}{m(\mathcal{B})^{{\gamma}/n}}\sum_{k=1}^{\infty}\frac{1}{2^{kn}}\cdot k
\Big[m\big(2^{k+1}\mathcal{B}\big)^{{(2\beta_1+\beta_2+\alpha)}/n}\cdot\big\|b\big\|_{\mathcal{C}^{p_1,\beta_1}}
\big\|\big[b,\mathcal L^{-\alpha/2}\big](f)\big\|_{\mathcal{M}^{\widetilde{q},\beta_1+\beta_2+\alpha}}\Big]\\
&\leq C\sum_{k=1}^{\infty}\frac{k}{2^{kn}}\left[\frac{m(2^{k+1}\mathcal{B})}{m(\mathcal{B})}\right]^{{\gamma}/n}
\big\|b\big\|^2_{\mathcal{C}^{p_1,\beta_1}}\big\|f\big\|_{\mathcal{M}^{p_2,\beta_2}}\\
&\leq C\sum_{k=1}^{\infty}\frac{k}{2^{k(n-\gamma)}}
\big\|b\big\|^2_{\mathcal{C}^{p_1,\beta_1}}\big\|f\big\|_{\mathcal{M}^{p_2,\beta_2}}
\leq C\big\|b\big\|^2_{\mathcal{C}^{p_1,\beta_1}}\big\|f\big\|_{\mathcal{M}^{p_2,\beta_2}}.
\end{split}
\end{equation*}
Similarly, we obtain that
\begin{equation*}
\mathrm{VI}^{(3)}\leq C\big\|b\big\|^2_{\mathcal{C}^{p_1,\beta_1}}\big\|f\big\|_{\mathcal{M}^{p_2,\beta_2}}.
\end{equation*}
Summing up the above estimates, we conclude that
\begin{equation*}
\mathrm{VI}\leq C\big\|b\big\|^2_{\mathcal{C}^{p_1,\beta_1}}\big\|f\big\|_{\mathcal{M}^{p_2,\beta_2}}.
\end{equation*}
It remains to estimate the term VII. Applying Lemma \ref{wanglemma2}, we obtain that for any $x\in \mathcal{B}$,
\begin{equation*}
\begin{split}
&\Big|\mathcal{L}^{-\alpha/2}\big(\big[b-b_{2\mathcal{B}}\big]^2f_2\big)(x)
-e^{-t_{\mathcal{B}}\mathcal{L}}\mathcal{L}^{-\alpha/2}\big(\big[b-b_{2\mathcal{B}}\big]^2f_2\big)(x)\Big|\\
&=\Big|\big(I-e^{-t_{\mathcal{B}}\mathcal{L}}\big)\mathcal{L}^{-\alpha/2}\big(\big[b-b_{2\mathcal{B}}\big]^2f_2\big)(x)\Big|\\
&\leq\int_{(2\mathcal{B})^{\complement}}\big|\widetilde{K}_{\alpha,t_{\mathcal{B}}}(x,y)\big|
\cdot\big|\big[b(y)-b_{2\mathcal{B}}\big]^2f(y)\big|\,dy\\
&\leq C\sum_{k=1}^{\infty}\int_{2^{k+1}\mathcal{B}\setminus 2^k \mathcal{B}}
\frac{1}{|x-y|^{n-\alpha}}\cdot\frac{r_{\mathcal{B}}^2}{|x-y|^{2}}\big|b(y)-b_{2\mathcal{B}}\big|^2\big|f(y)\big|\,dy\\
&\leq C\sum_{k=1}^{\infty}\frac{1}{2^{2k}}\cdot\frac{1}{m(2^{k+1}\mathcal{B})^{1-\alpha/n}}
\int_{2^{k+1}\mathcal{B}}\big|b(y)-b_{2\mathcal{B}}\big|^2\big|f(y)\big|\,dy.
\end{split}
\end{equation*}
Moreover, by using the same arguments as in the proof of Lemma \ref{wanglemma1}, we can prove that the following estimate
\begin{equation*}
\frac{1}{m(2^{k+1}\mathcal{B})}\int_{2^{k+1}\mathcal{B}}
\big|b(y)-b_{2\mathcal{B}}\big|^2\big|f(y)\big|\,dy
\leq C\cdot k\Big[m\big(2^{k+1}\mathcal{B}\big)^{{(2\beta_1+\beta_2)}/n}
\cdot\big\|b\big\|^2_{\mathcal{C}^{p_1,\beta_1}}\big\|f\big\|_{\mathcal{M}^{p_2,\beta_2}}\Big]
\end{equation*}
holds for any $f\in \mathcal{M}^{p_2,\beta_2}(\mathbb R^n)$ with $1<p_2<n/{\alpha}$ and $-n/{p_2}\leq\beta_2<(-\alpha)$, and $b\in \mathcal{C}^{p_1,\beta_1}(\mathbb R^n)$ with $1\leq p_1<\infty$ and $-n/{p_1}\leq\beta_1<0$. This in turn implies that
\begin{equation*}
\begin{split}
\mathrm{VII}&=\frac{1}{m(\mathcal{B})^{{\gamma}/n}}\bigg(\frac{1}{m(\mathcal{B})}
\int_{\mathcal{B}}\Big|\mathcal{L}^{-\alpha/2}\big(\big[b-b_{2\mathcal{B}}\big]^2f_2\big)(x)
-e^{-t_{\mathcal{B}}\mathcal{L}}\mathcal{L}^{-\alpha/2}\big(\big[b-b_{2\mathcal{B}}\big]^2f_2\big)(x)\Big|^{q}dx\bigg)^{1/{q}}\\
&\leq \frac{C}{m(\mathcal{B})^{{\gamma}/n}}\sum_{k=1}^{\infty}\frac{1}{2^{2k}}\cdot\frac{1}{m(2^{k+1}\mathcal{B})^{1-\alpha/n}}
\int_{2^{k+1}\mathcal{B}}\big|b(y)-b_{2\mathcal{B}}\big|^2\big|f(y)\big|\,dy\\
&\leq \frac{C}{m(\mathcal{B})^{{\gamma}/n}}
\sum_{k=1}^{\infty}\frac{k}{2^{2k}}\cdot m\big(2^{k+1}\mathcal{B}\big)^{{(2\beta_1+\beta_2+\alpha)}/n}
\big\|b\big\|^2_{\mathcal{C}^{p_1,\beta_1}}\big\|f\big\|_{\mathcal{M}^{p_2,\beta_2}}.\\
\end{split}
\end{equation*}
Therefore, by the facts that $\gamma=2\beta_1+\beta_2+\alpha$ and $\gamma<0$, we further obtain
\begin{equation*}
\begin{split}
\mathrm{VII}&\leq C\sum_{k=1}^{\infty}\frac{k}{2^{2k}}\cdot\left[\frac{m(2^{k+1}\mathcal{B})}{m(\mathcal{B})}\right]^{{\gamma}/n}
\big\|b\big\|^2_{\mathcal{C}^{p_1,\beta_1}}\big\|f\big\|_{\mathcal{M}^{p_2,\beta_2}}\\
&\leq C\sum_{k=1}^{\infty}\frac{k}{2^{(2-\gamma)k}}
\big\|b\big\|^2_{\mathcal{C}^{p_1,\beta_1}}\big\|f\big\|_{\mathcal{M}^{p_2,\beta_2}}\\
&\leq C\big\|b\big\|^2_{\mathcal{C}^{p_1,\beta_1}}\big\|f\big\|_{\mathcal{M}^{p_2,\beta_2}}.
\end{split}
\end{equation*}
Combining the estimates of I, II, III, IV, V, VI with VII, we get the desired result \eqref{main411}. Hence, the proof of Theorem \ref{thm2} is
complete.
\end{proof}

By induction on $m$, we can also prove the following general result.
\begin{theorem}\label{thm5}
Let $2\leq m\in \mathbb{N}$, $0<\alpha<n$, $1<p_2<n/{\alpha}$ and $-n/{p_2}\leq\beta_2<(-\alpha)$. Suppose that $b\in \mathcal{C}^{p_1,\beta_1}(\mathbb R^n)$ with $1\leq p_1<\infty$ and $-n/{p_1}\leq\beta_1<0$. Then for any $f\in \mathcal{M}^{p_2,\beta_2}(\mathbb R^n)$, there exists a positive constant $C>0$ independent of $b$ and $f$ such that
\begin{equation*}
\big\|\big[b,\mathcal{L}^{-\alpha/2}\big]^m(f)\big\|_{\mathcal{C}^{q,\gamma}_{\mathcal{L}}}
\leq C\big\|b\big\|^m_{\mathcal{C}^{p_1,\beta_1}}\big\|f\big\|_{\mathcal{M}^{p_2,\beta_2}},
\end{equation*}
provided that
\begin{equation*}
1/q=m/{p_1}+1/{p_2}-\alpha/n\quad \& \quad \gamma=m\beta_1+\beta_2+\alpha.
\end{equation*}
\end{theorem}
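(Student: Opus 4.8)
The plan is to prove Theorem \ref{thm5} by an induction on $m$ that also carries along, besides the stated $\mathcal{C}^{q,\gamma}_{\mathcal{L}}$ estimate (call it $(B_m)$), the companion Morrey--to--Morrey bound
\begin{equation*}
(A_m):\qquad \big\|\big[b,\mathcal{L}^{-\alpha/2}\big]^m(f)\big\|_{\mathcal{M}^{q,\gamma}}\le C\big\|b\big\|^m_{\mathcal{C}^{p_1,\beta_1}}\big\|f\big\|_{\mathcal{M}^{p_2,\beta_2}},
\end{equation*}
under the same indices $1/q=m/{p_1}+1/{p_2}-\alpha/n$ and $\gamma=m\beta_1+\beta_2+\alpha$. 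Here $(A_1)$ is Theorem \ref{thm4} and $(B_1),(B_2)$ are Theorems \ref{thm1}, \ref{thm2}. The reason for keeping $(A_m)$ in the induction is exactly the reason Theorem \ref{thm4} was needed in the proof of Theorem \ref{thm2}: the cross terms produced when one commutes $b$ past $\mathcal{L}^{-\alpha/2}$ inside $[b,\mathcal{L}^{-\alpha/2}]^m$ are \emph{lower--order} commutators, and they have to be estimated in a plain Morrey space, not in $\mathcal{C}^{q,\gamma}_{\mathcal{L}}$. One checks once and for all that the intermediate exponent pairs $(q_i,\gamma_i)$ with $1/{q_i}=i/{p_1}+1/{p_2}-\alpha/n$ and $\gamma_i=i\beta_1+\beta_2+\alpha$ ($0\le i\le m$) lie in the admissible ranges of Theorems \ref{generalizedmorrey} and \ref{12} and satisfy $\gamma_i<0$; this is immediate from $-n/{p_j}\le\beta_j$ and $\beta_2<-\alpha$, since $1/{q_i}$ is increasing in $i$.

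The algebraic engine is a fixed--ball binomial expansion. For a ball $\mathcal{B}=B(x_0,r_{\mathcal{B}})$ put $g_{\mathcal{B}}:=b-b_{2\mathcal{B}}$. Since $\mathcal{L}^{-\alpha/2}$ has kernel $\mathcal{K}_{\alpha}(x,y)$ and $b(x)-b(y)=g_{\mathcal{B}}(x)-g_{\mathcal{B}}(y)$, the binomial theorem under the integral sign gives, for every $x$,
\begin{equation*}
\big[b,\mathcal{L}^{-\alpha/2}\big]^m(f)(x)=\sum_{j=0}^{m}\binom{m}{j}(-1)^j g_{\mathcal{B}}(x)^{m-j}\,\mathcal{L}^{-\alpha/2}\big(g_{\mathcal{B}}^{\,j}f\big)(x),
\end{equation*}
and, expanding once more inside the argument, for $0\le j\le m$,
\begin{equation*}
\mathcal{L}^{-\alpha/2}\big(g_{\mathcal{B}}^{\,j}f\big)(x)=\sum_{i=0}^{j}\binom{j}{i}(-1)^i g_{\mathcal{B}}(x)^{j-i}\,\big[b,\mathcal{L}^{-\alpha/2}\big]^i(f)(x).
\end{equation*}
Substituting the second identity into the first for $0\le j\le m-1$, and splitting $f=f_1+f_2$ with $f_1=f\chi_{2\mathcal{B}}$, $f_2=f\chi_{(2\mathcal{B})^{\complement}}$ only in the remaining $j=m$ term, one sees that $[b,\mathcal{L}^{-\alpha/2}]^m(f)$ is a finite linear combination of terms of three kinds: (i) \emph{multiplication terms} $g_{\mathcal{B}}(x)^{m-i}\,[b,\mathcal{L}^{-\alpha/2}]^i(f)(x)$ with $0\le i\le m-1$; (ii) the \emph{local term} $\mathcal{L}^{-\alpha/2}\big(g_{\mathcal{B}}^{\,m}f_1\big)(x)$; and (iii) the \emph{tail term} $\mathcal{L}^{-\alpha/2}\big(g_{\mathcal{B}}^{\,m}f_2\big)(x)$.

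To prove $(A_m)$ one estimates the $\mathcal{M}^{q,\gamma}$-average over $\mathcal{B}$ of each kind just as in Theorem \ref{thm4}: for (i), H\"older's inequality peels off $m-i$ factors $\|b\|_{\mathcal{C}^{p_1,\beta_1}}$ and leaves an $\mathcal{M}^{q_i,\gamma_i}$-average of $[b,\mathcal{L}^{-\alpha/2}]^i(f)$, controlled by the inductive hypothesis $(A_i)$ when $i\ge1$ and by the boundedness of $\mathcal{L}^{-\alpha/2}$ in Theorem \ref{generalizedmorrey} when $i=0$ (the exponents combine precisely to $\gamma=m\beta_1+\beta_2+\alpha$, $1/q=m/{p_1}+1/{p_2}-\alpha/n$); for (ii), H\"older plus the $L^{\widetilde p}\!\to\!L^q$ bound of Theorem \ref{12} with $1/{\widetilde p}=m/{p_1}+1/{p_2}$; for (iii), the pointwise kernel estimate $|\mathcal{K}_{\alpha}(x,y)|\le C|x-y|^{\alpha-n}$ from \eqref{kernelk}, the decomposition of $(2\mathcal{B})^{\complement}$ into the annuli $2^{k+1}\mathcal{B}\setminus2^{k}\mathcal{B}$, and the $m$-th power analogue of Lemma \ref{wanglemma1}$(3)$,
\begin{equation*}
\frac1{m(2^{k+1}\mathcal{B})}\int_{2^{k+1}\mathcal{B}}\big|b(y)-b_{2\mathcal{B}}\big|^{m}\big|f(y)\big|\,dy\le Ck\Big[m\big(2^{k+1}\mathcal{B}\big)^{(m\beta_1+\beta_2)/n}\big\|b\big\|^{m}_{\mathcal{C}^{p_1,\beta_1}}\big\|f\big\|_{\mathcal{M}^{p_2,\beta_2}}\Big],
\end{equation*}
which follows by the same telescoping of $b_{2^i\mathcal{B}}-b_{2^{i-1}\mathcal{B}}$ used for Lemma \ref{wanglemma1}; the series in $k$ converges because $\gamma<0$. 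For $(B_m)$ one repeats the seven--term scheme of Theorem \ref{thm2} with $F=[b,\mathcal{L}^{-\alpha/2}]^m(f)$, bounding the $\mathcal{C}^{q,\gamma}_{\mathcal{L}}$-quantity built from $F-e^{-t_{\mathcal{B}}\mathcal{L}}F$. For each piece $X$ of $F$ of kind (i) or (ii) the triangle inequality splits $|F-e^{-t_{\mathcal{B}}\mathcal{L}}F|$ into an $\|X\|_{L^q(\mathcal{B})}$-part, treated as in $(A_m)$, and an $\|e^{-t_{\mathcal{B}}\mathcal{L}}X\|_{L^q(\mathcal{B})}$-part, where the Gaussian estimates \eqref{main45}--\eqref{main46} dominate $|e^{-t_{\mathcal{B}}\mathcal{L}}X(x)|$ by $C\sum_k 2^{-kn}$ times the average of $|X|$ over $2^{k+1}\mathcal{B}$; each such average is handled as in $(A_m)$ once the reference constant $b_{2\mathcal{B}}$ is replaced by $b_{2^{k+1}\mathcal{B}}$, the correction $|b_{2^{k+1}\mathcal{B}}-b_{2\mathcal{B}}|\le Ck\,m(2^{k+1}\mathcal{B})^{\beta_1/n}\|b\|_{\mathcal{C}^{p_1,\beta_1}}$ from Lemma \ref{wanglemma1}$(2)$ being absorbed by the geometric factor since $\gamma<n$. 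The only genuinely semigroup--dependent piece is the kind (iii) contribution $(-1)^m\big(I-e^{-t_{\mathcal{B}}\mathcal{L}}\big)\mathcal{L}^{-\alpha/2}\big(g_{\mathcal{B}}^{\,m}f_2\big)(x)$, where Lemma \ref{wanglemma2} supplies $|\widetilde{K}_{\alpha,t_{\mathcal{B}}}(x,y)|\le C|x-y|^{\alpha-n}\cdot t_{\mathcal{B}}/|x-y|^2$ and hence an extra factor $2^{-2k}$ on the $k$-th annulus, so the series converges because $\gamma<2$. Note that the kind (i) terms of $(B_m)$ only invoke the Morrey bounds $(A_i)$ for $i\le m-1$, so $(B_m)$ at stage $m$ depends only on the $(A_i)$ already available.

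The work is essentially bookkeeping, and the one conceptual point --- hence the main obstacle --- is the correct formulation of the induction: recognizing that the cross terms generated in $[b,\mathcal{L}^{-\alpha/2}]^m$ must be routed through the Morrey estimates $(A_i)$, $i\le m-1$, which therefore must be proved alongside (and before) the $\mathcal{C}^{q,\gamma}_{\mathcal{L}}$ estimates rather than the $\mathcal{C}^{q,\gamma}_{\mathcal{L}}$ estimates feeding back on themselves. The remaining difficulties are routine: the uniform control, across all the annuli $2^{k+1}\mathcal{B}\setminus2^k\mathcal{B}$, of the powers $|b-b_{2\mathcal{B}}|^{m-i}$ against the local averages $b_{2^{k+1}\mathcal{B}}$ --- handled by Lemma \ref{wanglemma1} and its $m$-th power version above --- and the verification that the finitely many intermediate exponent pairs $(q_i,\gamma_i)$ indeed lie in the ranges required by Theorems \ref{generalizedmorrey} and \ref{12}.
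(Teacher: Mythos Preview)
Your proposal is correct and follows essentially the same route as the paper. The paper does not write out a proof of Theorem \ref{thm5}; it proves the case $m=2$ in full (Theorem \ref{thm2}, using the auxiliary Morrey--to--Morrey bound Theorem \ref{thm4}) and then states that the general $m$ ``follows by induction on $m$'' using the same method. Your scheme is precisely the natural fleshing--out of that remark: you carry the Morrey bounds $(A_i)$ alongside the $\mathcal{C}^{q,\gamma}_{\mathcal{L}}$ bounds $(B_i)$, use the binomial expansion of $[b(x)-b(y)]^m$ around $b_{2\mathcal{B}}$ (which for $m=2$ is exactly the paper's displayed identity and the relation \eqref{second}), split the top--order piece via $f=f_1+f_2$, and handle the tail with Lemma \ref{wanglemma2}. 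The only cosmetic difference is packaging: you group into three ``kinds'' of terms, whereas the paper's $m=2$ proof lists seven terms I--VII; the content is the same. One small remark: in your $m$-th power analogue of Lemma \ref{wanglemma1}(3), the polynomial factor in $k$ should in general be $k^m$ rather than $k$ (the telescoping bound on $|b_{2^{k+1}\mathcal{B}}-b_{2\mathcal{B}}|$ gets raised to powers), but since every series you sum has a geometric decay $2^{-k(n-\gamma)}$ or $2^{-k(2-\gamma)}$ with $\gamma<0$, this is immaterial.
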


In particular, we have
\begin{corollary}\label{thm6}
Let $2\leq m\in \mathbb{N}$ and $0<\alpha<n$. Suppose that $b\in \mathcal{C}^{p_1,\beta_1}(\mathbb R^n)$ with $1\leq p_1<\infty$ and $-n/{p_1}\leq\beta_1<0$. Then for any
$f\in L^{p_2}(\mathbb R^n)$ with $1<p_2<n/{\alpha}$, there exists a positive constant $C>0$ independent of $b$ and $f$ such that
\begin{equation*}
\big\|\big[b,\mathcal L^{-\alpha/2}\big]^m(f)\big\|_{\mathcal{C}^{q,\gamma}_{\mathcal{L}}}
\leq C\big\|b\big\|^m_{\mathcal{C}^{p_1,\beta_1}}\big\|f\big\|_{L^{p_2}},
\end{equation*}
provided that
\begin{equation*}
1/q=m/{p_1}+1/{p_2}-\alpha/n\quad \& \quad \gamma=m\beta_1+\alpha-n/{p_2}.
\end{equation*}
\end{corollary}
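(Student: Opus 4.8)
The plan is to obtain Corollary \ref{thm6} as the borderline endpoint case $\beta_2=-n/{p_2}$ of Theorem \ref{thm5}. Recall from the introduction that, by the Lebesgue differentiation theorem, $\mathcal{M}^{p_2,-n/{p_2}}(\mathbb R^n)=L^{p_2}(\mathbb R^n)$ with identical norms; thus every $f\in L^{p_2}(\mathbb R^n)$ is simultaneously an element of $\mathcal{M}^{p_2,\beta_2}(\mathbb R^n)$ with $\beta_2=-n/{p_2}$, and $\|f\|_{\mathcal{M}^{p_2,\beta_2}}=\|f\|_{L^{p_2}}$. So the only real work is to check that the hypotheses and the output indices of Theorem \ref{thm5} match those asserted here under this choice of $\beta_2$.

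First I would verify parameter admissibility. Theorem \ref{thm5} requires $1<p_2<n/{\alpha}$ and $-n/{p_2}\le\beta_2<-\alpha$; taking $\beta_2=-n/{p_2}$, the left-hand inequality is an equality (which is permitted), while the right-hand inequality $-n/{p_2}<-\alpha$ is equivalent to $\alpha<n/{p_2}$, i.e.\ $p_2<n/{\alpha}$ --- exactly the hypothesis $1<p_2<n/{\alpha}$ imposed in Corollary \ref{thm6}. The remaining hypotheses (on $m$, $\alpha$, $b$, $p_1$, $\beta_1$) are carried over verbatim. Next I would identify the conclusion: with $\beta_2=-n/{p_2}$ the indices produced by Theorem \ref{thm5} become
\[
\frac{1}{q}=\frac{m}{p_1}+\frac{1}{p_2}-\frac{\alpha}{n},\qquad
\gamma=m\beta_1+\beta_2+\alpha=m\beta_1+\alpha-\frac{n}{p_2},
\]
which are precisely the indices listed in Corollary \ref{thm6}. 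Along the way one should also note that the target space $\mathcal{C}^{q,\gamma}_{\mathcal{L}}(\mathbb R^n)$ is well-defined, i.e.\ $-n/q\le\gamma\le1$: indeed $\gamma=m\beta_1+\beta_2+\alpha<0\le1$ since $\beta_1<0$ and $\beta_2+\alpha<0$, while rewriting $\gamma=m(\beta_1+n/{p_1})-n/q$ and using $\beta_1\ge-n/{p_1}$ gives $\gamma\ge-n/q$.

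Putting these together, applying Theorem \ref{thm5} with this specific $\beta_2$ yields
\[
\big\|\big[b,\mathcal L^{-\alpha/2}\big]^m(f)\big\|_{\mathcal{C}^{q,\gamma}_{\mathcal{L}}}
\le C\big\|b\big\|^m_{\mathcal{C}^{p_1,\beta_1}}\big\|f\big\|_{\mathcal{M}^{p_2,-n/{p_2}}}
=C\big\|b\big\|^m_{\mathcal{C}^{p_1,\beta_1}}\big\|f\big\|_{L^{p_2}},
\]
which is the asserted estimate. Since the argument is a pure specialization, there is no genuine obstacle; the only point that requires any care is confirming that the strict inequality $\beta_2<-\alpha$ needed by Theorem \ref{thm5} does not degenerate at the endpoint --- it does not, because $p_2<n/{\alpha}$ is assumed strictly. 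Alternatively, for a self-contained argument one could re-run the proof of Theorem \ref{thm5} (hence of Theorems \ref{thm2} and \ref{thm4}) verbatim with $\mathcal{M}^{p_2,\beta_2}(\mathbb R^n)$ replaced throughout by $L^{p_2}(\mathbb R^n)$: the only structural inputs used there are the averaged estimate of Lemma \ref{wanglemma1}(3) and the mapping properties of $\mathcal{L}^{-\alpha/2}$ from Theorems \ref{12} and \ref{generalizedmorrey}, all of which remain valid at $\beta_2=-n/{p_2}$.
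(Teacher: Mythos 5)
Your proposal is correct and coincides with the paper's own argument: the paper also obtains Corollary \ref{thm6} by specializing Theorem \ref{thm5} to $\beta_2=-n/{p_2}$, using $\mathcal{M}^{p_2,-n/{p_2}}(\mathbb R^n)=L^{p_2}(\mathbb R^n)$. Your additional checks that $\beta_2=-n/{p_2}<-\alpha$ follows from $p_2<n/\alpha$ and that $-n/q\leq\gamma<0$ are exactly the routine verifications the paper leaves implicit.
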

In fact, Corollary \ref{thm6} is a straightforward consequence of Theorem \ref{thm5}, since $\mathcal{M}^{p_2,\beta_2}(\mathbb R^n)=L^{p_2}(\mathbb R^n)$ if we take $\beta_2=-n/{p_2}$.

We remark that when $q\geq1$ and $2\leq m\in \mathbb{N}$,
\begin{equation*}
\begin{split}
&1/q=m/{p_1}+1/{p_2}-\alpha/n\Longrightarrow m\leq p_1.\\
&\gamma=m\beta_1+\beta_2+\alpha\Longrightarrow -n/{q}\leq\gamma<0.
\end{split}
\end{equation*}
\begin{remark}
Let $2\leq m\in \mathbb{N}$ and $b(x)$ be a locally integrable function on $\mathbb R^n$. The higher-order commutator $\big[b,I_{\alpha}\big]^m$ generated by $I_{\alpha}$ and $b$ is defined by
\begin{equation*}
\big[b,I_{\alpha}\big]^m(f)(x):=\big[b,\dots\big[b,\big[b,I_{\alpha}\big]\big]\big](f)(x),\quad 0<\alpha<n.
\end{equation*}
That is, these commutators $\big[b,I_{\alpha}\big]^m$($m=1,2,\dots$) can be defined by recurrence:
\begin{equation*}
\big[b,I_{\alpha}\big]^m(f)=\big[b,\big[b,I_{\alpha}\big]^{m-1}(f)\big],
\end{equation*}
where
\begin{equation*}
\big[b,I_{\alpha}\big]^1(f):=\big[b,I_{\alpha}\big](f).
\end{equation*}
Then we have
\begin{equation*}
\big[b,I_{\alpha}\big]^m(f)(x)=\frac{1}{\gamma(\alpha)}\int_{\mathbb R^n}\frac{[b(x)-b(y)]^m}{|x-y|^{n-\alpha}}f(y)\,dy.
\end{equation*}
Following \cite{shi1,shi2}, we will say that a locally integrable function $b(x)$ belongs to the reverse H\"{o}lder class $RH_{\infty}$, if there exists a constant $C>0$ such that for any ball $\mathcal{B}\subset\mathbb R^n$,
\begin{equation}\label{reverseh}
\sup_{x\in \mathcal{B}}\big|b(x)-b_{\mathcal{B}}\big|
\leq C\bigg(\frac{1}{m(\mathcal{B})}\int_{\mathcal{B}}\big|b(x)-b_{\mathcal{B}}\big|\,dx\bigg).
\end{equation}
When $L=-\Delta$, from Theorem \ref{thm5} and Corollary \ref{thm6}, we can obtain the corresponding results for $\big[b,I_{\alpha}\big]^m$, which have been proved by Shi and Lu, under the assumption that $b(x)$ satisfies \eqref{reverseh}, see \cite[Theorem 1.1]{shi2}. It should be pointed out that the condition \eqref{reverseh} assumed on symbol functions has been removed from Theorems \ref{thm2} and \ref{thm5} in this paper. We improve and extend Shi and Lu's result \cite{shi2} about the higher-order commutator $\big[b,I_{\alpha}\big]^m$ with $2\leq m\in \mathbb{N}$. 
\end{remark}

{\bf Acknowledgments.}

The authors were supported by a grant from Xinjiang University under the project``Real-Variable Theory of Function Spaces and Its Applications".This work was supported by the Natural Science Foundation of China (No.XJEDU2020Y002 and 2022D01C407).

\bibliographystyle{amsplain}

\end{document}